\def\subsubsection{\@startsection{subsubsection}{3}%
\z@{.5\linespacing\@plus.7\linespacing}{-.5em}%
{\normalfont\bfseries}}
\numberwithin{equation}{section}
\theoremstyle{plain}
\newtheorem{theorem}{Theorem}[section]
\newtheorem{lemma}{Lemma}[section]
\newtheorem{prop}{Proposition}[section]
\newtheorem{assumption}{Assumption}[section]
\theoremstyle{definition}
\newtheorem{definition}[theorem]{Definition}
\theoremstyle{remark}
\newtheorem{remark}[theorem]{Remark}
\newtheorem{example}{Example}[section]
\numberwithin{table}{section}
\newcommand{\diag}{\operatorname{diag}}
\newcommand{\vep}{\varepsilon}
\newcommand{\E}{\mathbb{E}}
\newcommand{\R}{\mathbb{R}}
\newcommand{\spn}{\operatorname{span}}
\def\a{\alpha}
\def\cA{\mathcal{A}}
\def\cE{\mathcal{E}}
\def\cF{\mathcal{F}}
\def\cG{\mathcal{G}}
\def\cH{\mathcal{H}}
\def\cI{\mathcal{I}}
\def\cL{\mathcal{L}}
\def\cO{\mathcal{O}}
\def\cP{\mathcal{P}}
\def\cS{\mathcal{S}}
\def\d{{\mathrm{d}}}
\def\mx{{\mathbbm{x}}}
\def\ba{{\textbf{\textit{a}}}}
\def\bu{{\textbf{\textit{u}}}}
\def\bz{{\textbf{\textit{z}}}}
\def\bH{{\textbf{H}}}
\def\bG{{\textbf{G}}}
\def\bX{{\textbf{X}}}
\def\bY{{\textbf{Y}}}
\def\bZ{{\textbf{Z}}}
\def\sx{\mathbbm{x}}
\def\sE{{\mathbb{E}}}
\def\sF{{\mathbb{F}}}
\def\sG{{\mathbb{G}}}
\def\sH{{\mathbb{H}}}
\def\sI{{\mathbb{I}}}
\def\sN{{\mathbb{N}}}
\def\sP{\mathbb{P}}
\def\sR{{\mathbb R}}
\def\sS{{\mathbb{S}}}
\def\sX{{\mathbb{X}}}
\newcommand{\tr}{\textnormal{tr}}
\DeclareMathAlphabet{\mymathbb}{U}{bbold}{m}{n} 
\def \Ck2{\begin{pmatrix} C_k & 0 \\ 0 & C_k \end{pmatrix}}
\def\B2{{\begin{pmatrix}
       \Tilde{B} & 0 \\   0 &  \Tilde{B}  \end{pmatrix}}}
\def\A2{{\begin{pmatrix}
       A & 0 \\   0 &  A  \end{pmatrix}}}
\newcommand{\vcat}{\mathsf{vcat}}
\title[$\alpha$-potential game]{An $\alpha$-potential game framework for $N$-player dynamic games}
\author{Xin Guo$^\ast$}
\email{xinguo@berkeley.edu}
\author{Xinyu Li$^\ast$}
\email{xinyu\_li@berkeley.edu}
\author{Yufei Zhang$^\dagger$}
\email{yufei.zhang@imperial.ac.uk}
\address{$^\ast$ {Department of Industrial Engineering and Operations Research, UC Berkeley,  CA, USA}}
\address{$^\dagger${Department of Mathematics, Imperial College London, London, UK}}
\subjclass[2020]{
91A06,  
91A15,  
91A14,  
91A16.  
}
\keywords{
Potential game, 
  Nash equilibrium,   sensitivity process, 
  McKean-Vlasov control,
  mean field game, distributed game, linear-quadratic network game}
\begin{document}

\maketitle 
\begin{abstract}
This paper proposes and studies a general form of dynamic $N$-player non-cooperative games called $\alpha$-potential games, where the change of a player's { objective function} upon her unilateral deviation from her strategy is equal to the change of an $\alpha$-potential function up to an error $\alpha$. Analogous to the static potential game (which corresponds to $\alpha=0$),  the $\alpha$-potential game framework is shown to reduce the challenging task of finding $\alpha$-Nash equilibria for a dynamic game to minimizing the $\alpha$-potential  function. Moreover, an analytical characterization of $\alpha$-potential functions is established, with $\alpha$ represented in terms of the magnitude of the asymmetry of { objective function}s' second-order derivatives. For stochastic differential games in which the state dynamic is a controlled diffusion, $\alpha$ is characterized  in terms of the number of players, the choice of admissible strategies, and the intensity of interactions and the level of heterogeneity among players. Two  classes of stochastic differential games, namely  distributed games and games with mean field interactions, are analyzed  to highlight the  dependence of $\alpha$ on general game characteristics that are beyond the mean-field paradigm, which focuses on the limit of $N$ with homogeneous players. To analyze the $\alpha$-NE,  the associated optimization problem is embedded into a conditional McKean–Vlasov control problem. A verification theorem is established to construct $\alpha$-NE based on solutions to an infinite-dimensional Hamilton-Jacobi-Bellman equation, which is reduced to a system of ordinary differential equations for linear-quadratic games.

\end{abstract}


\section{Introduction}
\subsection{Overview}
Static potential games, introduced by Monderer and Shapley in \cite{monderer1996potential}, are non-cooperative games where any player's change in utility function upon unilaterally deviating from her policy can be evaluated through the change of an auxiliary function called potential function. 
The introduction of the potential function is powerful as it simplifies the otherwise challenging task of finding Nash equilibria in $N$-player non-cooperative games to optimizing a single function. Static potential games and their variants have been a popular framework for studying $N$-player static games, especially with heterogeneous players.

{ 

In the dynamic setting with Markovian state transitions and Markov policies, direct generalization of the static potential game called Markov potential game is proposed in \cite{macua2018learning}.
Unfortunately, most dynamic games are not Markov potential games. In fact, \cite{leonardos2021global}  shows that even a Markov game where the game at each state is a static potential game may not be a Markov potential game. In practice, Markov potential game framework imposes restrictive assumptions for various applied problems,  such as state transitions being of distributed types for multi-agent robotics \cite{kavuncu2021potential, sun2023distributed, sun2024imagined} and  instantaneous reward functions being separable for resource allocation \cite{narasimha2022multi}. 
}




{

Recently, a more general form of dynamic game called Markov $\alpha$-potential game is proposed by \cite{guo2023markov} for $N$-player non-cooperative  Markov games with finite-state, finite-action, and discrete-time state transition.  The introduction of a parameter $\alpha$ and an associated $\alpha$-potential function enables capturing the interactions of players and their heterogeneity.  They establish the existence of $\alpha$-potential function for discrete-time Markov games, and show that maximizing the $\alpha$-potential function yields an $\alpha$-Nash equilibrium (NE). Meanwhile, they identify several important classes of  dynamic $\alpha$-potential games. These present new potential applications, in addition to  various potential games explored earlier in transportation systems \cite{zhou2021power}, power networks \cite{ibars2010distributed}, and multi-agent robotics \cite{kavuncu2021potential, sun2023distributed, sun2024imagined}, along with more recent studies  \cite{leonardos2021global, mao2021decentralized,   song2021can,  zhang2021gradient, ding2022independent, fox2022independent, maheshwari2022independent,   narasimha2022multi, guo2024towards}. 


In this paper, we propose and study general dynamic $\alpha$-potential 
games, including stochastic differential games with continuous state-action space, and with continuous-time state transition. Similar to the $\alpha$-potential game in the discrete-time setting in \cite{guo2023markov},  this general $\alpha$-potential game framework reduces the challenging task of finding approximate NE in a dynamic game to a (simpler) optimization problem of minimizing a single function.

In the framework of $\alpha$-potential games, there are two key mathematical questions: finding and optimizing  the $\alpha$-potential function, and analyzing the magnitude of $\alpha$. In the discrete-time setting with finite state and finite action, these two questions have been answered in 
 \cite{guo2023markov} by formulating a semi-infinite linear programming (SLP) problem such that its optimal solution is the $\alpha$-potential function and its  minimum yields the   $\alpha$. However, this  SLP approach does not  apply to continuous-time  and arbitrary state-action spaces. 
 
Instead,  in this paper
we  adopt the tool of linear derivatives developed in \cite{guo2024towards} to construct the $\alpha$-potential function $\Phi$, and to characterize $\alpha$ in terms of the magnitude of the asymmetry of { objective function}s' second-order derivatives.
For stochastic differential games where the state dynamic is a controlled diffusion, the $\alpha$-potential function is expressed via the sensitivity processes of the controlled diffusion, and $\alpha$ is explicitly characterized in terms of the game structure including the number of players, 
the choice of strategy classes,
and the intensity of interactions and the level of heterogeneity among players. 
To analyze the $\alpha$-NE, our approach is to show that  minimizing \(\Phi\) is  equivalent to solving a conditional McKean-Vlasov control problem:  we first develop   the dynamic programming principle (DPP), and  then establish a verification theorem to construct
a minimizer of the $\alpha$-potential function $\Phi$
based on solutions to an infinite-dimensional   Hamilton-Jacobi-Bellman (HJB) equation.  To the best of our knowledge, this is the first result establishing a DPP for dynamic potential games.  
Prior to our work,  the only known approach  for \(\alpha\)-NE is the policy-gradient algorithm in \cite{guo2023markov} for finite-state discrete-time $\alpha$-potential games.
Our approach is illustrated  through a linear-quadratic network  game, where the $\alpha$-NE and the associated HJB equation are explicitly solved.}

\subsection{Outline of main results}

\subsubsection*{$\alpha$-potential games and approximate Nash equilibria}

Consider a general $N$-player game $\cG$  characterized by $\cG= ({ [N]}, S, (\cA_i)_{i\in { [N]}}, (V_i)_{i\in { [N]}})$,\footnotemark
where ${ [N]}=\{1, \ldots, N\}$ is the set of players, $S$ is the state space of the underlying dynamics, $\cA_i$ is the set of admissible strategies of player $i$, and 
$V_i: \prod_{i\in { [N]}} \cA_i \rightarrow \mathbb{R}$ is the { total cost function} of player $i$, with $V_i(\ba)$ being player $i$'s expected accumulated cost if the state dynamics starts with a fixed initial state $s_0\in S$ and all players take the strategy profile $\ba$. For each $i\in { [N]}$, player $i$ aims to minimize her { objective function} $V_i$ over all admissible strategies in $\cA_i$.

\footnotetext{For notational simplicity, we do not write explicitly the dependence of 
$\cG$
on the fixed initial state $s_0$.}

Here we focus on a class of games called $\alpha$-potential games, where there exists $\alpha\ge 0$ and $\Phi:\cA^{(N)} \to \R$ such that for all $ i \in { [N]}$, $a_i, a_i^{\prime} \in \cA_i$ and $a_{-i} \in \cA_{-i}^{(N)}$,
\begin{equation}
\label{eq:alpha_v_phi_intro}
|V_i\left(\left(a_i^{\prime}, a_{-i}\right)\right)-V_i\left(\left(a_i, a_{-i}\right)\right) - \left( \Phi\left(\left(a_i^{\prime}, a_{-i}\right)\right)-\Phi\left(\left(a_i, a_{-i}\right)\right) \right) | \leq \alpha,   
\end{equation}
with $\cA^{(N)} = \prod_{i\in { [N]}} \cA_i$    the set of strategy profiles for all players, and $\cA_{-i}^{(N)}=\prod_{j \in { [N]} \backslash\{i\}} \cA_j$ the set of strategy profiles of all players except player $i$. 
Such $\Phi$ is called an $\alpha$-potential function for the game $\mathcal G$. In the case of $\alpha=0$, we simply call the game $\cG$ a potential game and $\Phi$ a potential function for $\cG$.

Equation \eqref{eq:alpha_v_phi_intro} relaxes the notion of potential games in \cite{monderer1996potential,macua2018learning} by introducing a   parameter $\alpha$. That is, a game $\cG$ is an $\alpha$-potential game if the change of a player's { objective function} upon her unilateral deviation from her strategy is equal to the change of the $\alpha$-potential function up to an error $\alpha$. This additional parameter $\alpha$ enables capturing important information regarding the interaction between players' state dynamics and strategies, beyond the number of players which has been the primary focus of approximate Nash equilibrium approach such as mean field games.

Similar to potential games,  an $\alpha$-potential game $\cG$ has an important property: any  minimizer of an $\alpha$-potential function of $\cG$ is an $\alpha$-NE of the game $\cG$  (Proposition \ref{prop:optimizer_and_NE}).
Proposition \ref{prop:optimizer_and_NE} suggests    three key components in applying the $\alpha$-potential game framework to analyze general non-cooperative games: 
constructing an 
$\alpha$-potential function, characterizing (upper bounds of) the associated parameter 
$\alpha$, and developing a solution technique for minimizing the 
$\alpha$-potential function over admissible strategy sets.
 

 \subsubsection*{Characterizing general $\alpha$-potential games}
We start by    constructing  the    $\alpha$-potential function and characterizing  the associated parameter $\alpha$ for a given game $\cG$, where all players' strategy classes are convex.
Specifically,  for each $i\in { [N]}$, denote by $\spn(\cA_i)$   the vector space of all linear combinations of strategies in $\cA_i$.
The concept of  linear derivative of $V_i$ with respect to $\cA_i$, introduced in \cite{guo2024towards} for arbitrary convex strategy classes,  enables us to  establish Theorem \ref{thm:potential_hessian}: if the { objective function}s of a game $\cG$ admit  second-order linear derivatives, then under some mild regularity conditions, 
for any fixed $\bz \in \cA^{(N)}$, the function     \begin{equation}\label{eq:Phi_intro}
        \Phi(\ba)\coloneqq \int_0^1 \sum_{j=1}^N \frac{\delta V_j}{\delta a_j}\left(\bz+r(\ba-\bz) ; a_j-z_j\right) \mathrm{d} r 
    \end{equation}
is an $\alpha$-potential function of $\cG$, with 
    \begin{equation}
    \label{eq:alpha_general_intro}
         \alpha \le  { 2} \sup_{i \in { [N]}, a'_i\in \cA_i, \ba ,\ba''   \in \cA^{(N)}}
  \sum_{j=1}^N\left| \frac{\delta^2 V_i}{\delta a_i \delta a_j} \left(\ba ; a_i^{\prime} , a''_j  \right) 
-\frac{\delta^2 V_j}{\delta a_j \delta a_i}\left(\ba ; a''_j , a_i^{\prime} \right)  
 \right|.
    \end{equation}
This characterization generalizes existing results of potential games with finite-dimensional strategy classes  
\cite{monderer1996potential, leonardos2021global, hosseinirad2023general} to
 general dynamic games with arbitrary convex strategy classes.  
In particular, it replaces the Fr\'{e}chet derivatives used in earlier works with linear derivatives,    without requiring a topological structure on $\cA^{(N)}$.  Moreover, it   quantifies the performance of the $\alpha$-potential function \eqref{eq:Phi_intro} in terms of the difference between the second-order linear derivatives of the { objective function}s.

\subsubsection*{Constructing $\alpha$-potential function for stochastic differential game}
   
The main contribution of this paper is to   develop the criteria  \eqref{eq:Phi_intro} 
and \eqref{eq:alpha_general_intro}   for stochastic differential games in which the state dynamic is a controlled diffusion. 
Specifically, let $T\in (0,\infty)$, let $(\Omega, \cF,    \sP)$  be  
a   probability space supporting   an $m$-dimensional Brownian motion $W=(W^k)_{k=1}^m$,
and 
let  $\sF $  be
 the  natural  filtration of   $W$.
Let $\cH^2(\sR^n)$ be the space of $\sR^n$-valued  
 square integrable $\sF$-adapted processes, and 
 for each $i\in { [N]}$, 
$\mathcal A_i$ be a convex subset  of $\cH^2(\sR^n)$
representing  player $i$'s admissible  controls. 
For each $\bu \in \cA^{(N)} $, let $\bX^{\bu}$ be the associated state process satisfying for all $i\in { [N]}$ and $t\in [0,T]$, 
\begin{equation}
\label{eq:state_intro} 
     \d X_{t,i}=  b_i(t,  
     {\bX}_{t }, \bu_{t})    \d t+\sum_{k=1}^m \sigma_{ik}(t,  {\bX}_{t }, \bu_{t}) \d {W}^k_t, 
    \quad X_{0,i}=x_i,
\end{equation}
where $x_i\in \sR$ is a given initial state, 
$  b_i :[0,T]\times  \sR^{Nd }\times \sR^{Nn }\to \sR^d $ 
and 
$\sigma_i=(\sigma_{i1},\ldots, \sigma_{im}) :[0,T]\times  \sR^{Nd }\times \sR^{Nn }\to \sR^{d\times m}$
  are given functions.  The { objective function} $V_i:\cA^{(N)}  \to \sR$ of player $i$ is 
\begin{align}
\label{eq:value_intro}
V_i(\bu)=\sE\left[\int_0^T 
f_i(t, \bX^{\bu}_t, \bu_t)\,\d t + g_i( \bX^{\bu}_T) \right],
\end{align}
where $f_i:[0,T]\times \sR^{Nd} \times \sR^{Nn} \to \sR$ and $g_i:\sR^{Nd} \to \sR$ are given functions. Precise assumptions on $x_i, {b}_i,\sigma_i, f_i$ and $g_i$ are given in Assumption \ref{assum:regularity_general}.

We characterize the linear derivative of $V_i$ and the function $\Phi$   in \eqref{eq:Phi_intro} through the sensitivity processes of the state process with respect to controls (Theorem \ref{thm:potential_diff}). 
In particular, 
assuming  $0\in \cA^{(N)} $, the  function $\Phi $   (with $\bz=0$) can be expressed as
\begin{align}
\label{eq:potential_differential_intro}
\begin{split}
\Phi(\bu)&  =\int_0^1      \sum_{i=1}^N  \mathbb{E}\Bigg[
    \int_0^T
    \begin{pmatrix}
        \bY^{ r \bu ,     u_{i}}_t \\
    u_{t,i} 
    \end{pmatrix}^\top 
    \begin{pmatrix}
        \partial_x f_i \\
    \partial_{u_i} f_i
    \end{pmatrix}
      (t, \bX_t^{r \bu }, r \bu_t   )\,
     \mathrm{d} t
     + (\partial_x g_i)^\top  (\bX_T^{ r \bu} ) \bY^{r\bu, u_{i}}_T\Bigg] \d r,
 \end{split}
\end{align}
 where 
for each $\bu\in\cA^{(N)}$
and $u'_i\in \cH^2(\sR^n)$,
the sensitivity process 
$\bY^{\bu,u'_i}$ 
  is the derivative  (in the $L^2$ sense) of the state $\bX^{\bu}$ when player $i$   varies her control in the direction $u'_i$, and satisfies a controlled linear stochastic differential equation    (as in  \eqref{eq:Y_h_i_general}). See Theorem \ref{thm:potential_diff}
  for the expression of $\Phi$ with general $\cA^{(N)}$. 

{

\subsubsection*{Quantifying  $\alpha$ for  stochastic differential game}

Using the   bound \eqref{eq:alpha_general_intro}, we then quantify the parameter $\alpha$
 for the  game \eqref{eq:state_intro}-\eqref{eq:value_intro}   based on the structure of the game. 
 A key technical step is to characterize and estimate 
 the second-order linear derivative of $V_i$   through   second-order sensitivity processes, representing the derivative of $\bY^{\bu,u'_i}$.
%
Under suitable structural assumptions on the   coefficients of \eqref{eq:state_intro}, we establish precise estimates for these sensitivity processes, 
and obtain  the following upper bound  (stated more precisely in Theorem \ref{thm:value_jacobian_open}):
  for all $i,j\in { [N]}$,
    \begin{equation}
    \label{eq:jacobian_gap_intro}
        \left|\frac{\delta^2 V_i}{\delta u_i \delta u_j}\left(\bu ; u_i^{\prime}, u_j^{\prime \prime}\right)-\frac{\delta^2 V_j}{\delta u_j \delta u_i}\left(\bu ; u_j^{\prime \prime}, u_i^{\prime}\right)\right|
        \leq C C_{f,g,N},
    \end{equation}
where $C\ge 0$ is a constant depending only on the state coefficients and time horizon, and $C_{f,g,N}$ is a constant depending explicitly on the number of players $N$ and the sup-norms of the partial derivatives of  $f_i-f_j$ and $g_i-g_j$. 

 This analysis of   $\alpha$ shows its  general dependence on game characteristics, {\it including possibly asymmetric and heterogeneous forms of  cost functions   and state dynamics}, and is not limited to the scale of $N$ as in the mean-field paradigm. To highlight this distinction, we specialize the above  bound \eqref{eq:jacobian_gap_intro} of $\alpha$ to two  classes of stochastic games: 
\begin{itemize}
  
 \item For distributed games where players only interact through their cost functions and not  the state and control processes, we prove that if a static potential function can be derived from the cost functions, then 
 dynamics games are potential games (with $\alpha=0$), regardless of the number of players  (Example  \ref{ex:dist_game_open_distributed}).  
   \item 
    For games with mean field type interactions,
if each player's dependence on others' states and actions is solely through her empirical measures, 
then  $\alpha$ is of the magnitude $\mathcal O(1/N)$ as $N\to \infty$
(Example \ref{ex:mean_field_open}).
Note that  
    $\alpha$ decays to zero as the number of players increases, 
 even with heterogeneity in state dynamics, in cost functions, and in admissible strategy classes. This is in contrast to the classical mean field games with homogeneous players;  see Remark  \ref{rmk:mean_field_interaction} for a more detailed comparison.

\end{itemize}

\subsubsection*{$\alpha$-NE via a  McKean-Vlasov control problem}

We further develop  a dynamic programming approach to minimize the
function $\Phi$ over $\cA^{(N)}$. The main difficulty is   that the objective \eqref{eq:potential_differential_intro} depends on the aggregated
behavior of the state and sensitivity processes with respect to $r\in [0,1]$, which acts as an additional  noise independent of the
Brownian motion $W$. Meanwhile, the admissible controls in $\cA^{(N)}$ are adapted to a smaller filtration $\sF$ that depends
only on $W$. 
To recover the  dynamic programming principle,
we embed the optimization problem into a    conditional McKean--Vlasov  control problem.
This is achieved by treating  $r$ in  \eqref{eq:potential_differential_intro} as a  uniform random variable  $\mathfrak r$   independent of $W$, 
 and expressing      
the objective $\Phi(\bu)$ 
in terms of $\bu$ and  the conditional law of  $(\bX^{\mathfrak{r}\bu},
\bY^{\mathfrak{r}\bu, u_1},
\ldots, 
\bY^{\mathfrak{r}\bu, u_N},\mathfrak{r})$ given $W$.
This approach allows  us to embed 
the minimization of  $\bu \mapsto\Phi(\bu)$    into a  control problem, where the   state space 
is a subset of the Wasserstein space of probability measures
(Proposition \ref{prop:law_invariance}).
Moreover, 
by   It\^o’s formula along a flow of conditional measures,  
we establish a verification theorem to construct 
a minimizer of $\Phi$
based on solutions to an infinite-dimensional   Hamilton-Jacobi-Bellman (HJB) equation (Theorem \ref{thm: verification}).

}

\subsubsection*{A toy example of dynamic games on   graph}

We illustrate our results  
through a simple  linear-quadratic  game   on a   undirected graph,
whose  the vertices represent players, and edges indicate dependencies between them.
We show that this game is an $\alpha$-potential game,
and characterize $\alpha$ explicitly 
in terms of $N$ the number of players, and $q_{ij}$ the strength and  the degree of heterogeneous interaction between players $i$ and $j$ (see Section \ref{sec:alpha_crowd}).
We further construct
an $\alpha$-NE of the game analytically 
through a system of ordinary differential equations (Theorem \ref{thm:lq_verification}). This is accomplished by solving the associated HJB equation for the $\alpha$-potential function \eqref{eq:potential_differential_intro} by utilizing the game’s linear-quadratic structure.

The asymptotic limit derived from our  analysis allows for general  asymmetric interactions and heterogeneity among players, in contrast to  existing works on the mean field approximation for both differential games (e.g.,  \cite{lasry2006jeux, lasry2007mean,carmona2018probabilistic}) and games on graphs (e.g., \cite{gao2020linear, lacker2022label, 
bayraktar2023propagation}). 
 It shows   that the  $\alpha$-potential game framework enables differentiating game characteristics and player interactions which are hard to quantify under previous more restrictive game frameworks,
such as Markov potential games   \cite{macua2018learning,
leonardos2021global, 
mao2021decentralized, song2021can, ding2022independent}
and near potential games
\cite{candogan2011flows, candogan2013near,  
varga2022limited,
varga2023upper}.

\section{Analytical framework for general $\alpha$-potential games}

\label{sec:abstract}

\subsection{$\alpha$-Potential games and  approximate Nash equilibria}
This section introduces the mathematical framework for $\alpha$-potential games,  starting by some basic notions for the game and associated strategies.

Consider a game $\cG =({ [N]},   S, (\cA_i)_{i\in { [N]}}, (V_i)_{i\in { [N]}})$ defined as follows:   
 ${ [N]}=\{1, \ldots, N\}$, $N \in \mathbb{N}$, is a finite set of players,
 $S$ is a set  representing the state space of the underlying dynamics, 
$\cA_i$ is a subset of a real vector space representing all admissible strategies of player $i$,
and $\cA^{(N)} = \prod_{i\in { [N]}} \cA_i$ is the set of strategy profiles for all players. 
For each $i\in { [N]}$, 
 $V_i: \cA^{(N)} \rightarrow \mathbb{R}$ is the { objective function} of player $i$, 
 where $V_i(\ba)$ is player $i$'s expected cost if the state dynamics starts with a fixed initial state $s_0\in  S$ and all players take the strategy profile $\ba\in \cA^{(N)}$. 
For any $i\in { [N]}$, player $i$ aims to minimize the { objective function} $V_i$ over all admissible strategies in $\cA_i$.
We denote by $\cA_{-i}^{(N)}=\prod_{j \in { [N]} \backslash\{i\}} \cA_j$ the set of strategy profiles of all players except player $i$,
 and by $\ba $ and $a_{-i} $ a generic element of $\cA^{(N)}$ and  $\cA_{-i}^{(N)}$, respectively.

Note that this game framework includes static games, and discrete-time and continuous-time dynamic games. Moreover, depending on the precise definitions of strategy classes, this framework also accommodates stochastic differential games with either open-loop or closed-loop controls.
The focus of this paper is on a class of games $\mathcal G$ called $\alpha$-potential games, defined as follows.

\begin{definition}[$\alpha$-potential game]
\label{def:a_potential_game}
Given a game $\cG =({ [N]},  S, (\cA_i)_{i\in { [N]}}, (V_i)_{i\in { [N]}})$, if there exists $\alpha\ge 0$ and   $\Phi:\cA^{(N)} \to \R$ such that 
for all $ i \in { [N]}$, $a_i, a_i^{\prime} \in \cA_i$ and $a_{-i} \in \cA_{-i}^{(N)}$,
\begin{equation}
\label{eq:alpha_v_phi}
|V_i\left(\left(a_i^{\prime}, a_{-i}\right)\right)-V_i\left(\left(a_i, a_{-i}\right)\right) - \left( \Phi\left(\left(a_i^{\prime}, a_{-i}\right)\right)-\Phi\left(\left(a_i, a_{-i}\right)\right) \right) | \leq \alpha,    
\end{equation}
then we say $\cG$ is an $\alpha$-potential game, and $\Phi$ is an $\alpha$-potential function for $\mathcal G$. In the case where $\alpha=0$, we simply call the game $\cG$ a potential game and $\Phi$ a potential function for $\cG$.
\end{definition}

Intuitively, a game $\cG$ is an $\alpha$-potential game if there exists an $\alpha$-potential function such that whenever one player unilaterally deviates from her strategy, the change of that player's { objective function} is equal to the change of the $\alpha$-potential function up to an error $\alpha$.
This definition generalizes the notion of potential games in \cite{monderer1996potential} by allowing for a positive $\alpha$. Such a relaxation is essential for dynamic games, as many dynamic games that are not potential games are, in fact, $\alpha$-potential games for some $\alpha>0$; see \cite{guo2023markov} and also Sections \ref{sec:open_loop}. 
Indeed, it is clear that if $\hat \alpha \coloneqq  \sup_{i\in { [N]}, \ba\in \cA^{(N)}}|V_i(\ba)|<\infty$, then $\cG $ is a $2\hat \alpha$-potential game   and a $2\hat \alpha$-potential function $\Phi=0$.

For a given game $\cG$, there can be multiple parameters $\alpha$ satisfying the condition \eqref{eq:alpha_v_phi}.
In \cite{guo2023markov}, an $\alpha$-potential game is defined with the optimal $\alpha$ determined by
\begin{equation}
\label{eq:optimal_alpha}
\alpha^* = \inf_{\Phi\in\mathscr F} \sup_{i\in { [N]}, a_i, a_i^{\prime} \in \cA_i, a_{-i} \in \cA_{-i}^{(N)}} 
	|V_i\left(\left(a_i^{\prime}, a_{-i}\right)\right)-V_i\left(\left(a_i, a_{-i}\right)\right) - \left( \Phi\left(\left(a_i^{\prime}, a_{-i}\right)\right)-\Phi\left(\left(a_i, a_{-i}\right)\right) \right) |,
\end{equation}
where $\mathscr F$ contains suitable functions $\Phi:\cA^{(N)}  \to \R$.
For discrete games with finite states and actions, \cite{guo2023markov} shows that selecting $\mathscr F$ as the set of uniformly equi-continuous functions on Markov policies ensures a well-defined $\alpha^*$ and also the existence of an $\alpha^*$-potential function within $\mathscr F$.

However, in continuous-time games with continuous state and action spaces, computing the optimal $\alpha^*$ in \eqref{eq:optimal_alpha} is challenging, and selecting a suitable set $\mathscr F$ for the existence of an $\alpha^*$-potential function remains unclear.
Most critically, as shown in \cite{guo2023markov}, having an appropriate upper bound $\alpha$ of $\alpha^*$ and an associated $\alpha$-potential function $\Phi$ is sufficient for the key analysis.
Therefore, we adopt Definition \ref{def:a_potential_game}, which frees us to focus on characterizing some upper bound of $\alpha^*$ in terms of the number of players, the set of admissible strategies, and the game structure.

For an $\alpha$-potential game, computing an approximate Nash equilibrium reduces to an optimization problem. To see it, we first recall the solution concept of $\varepsilon$-Nash equilibrium.

\begin{definition}\label{def: NE}
 For any $\varepsilon\ge 0$, a strategy profile $\ba = (a_i )_{i\in { [N]}} \in \cA^{(N)}$ is an $\varepsilon$-Nash equilibrium of the game $\mathcal{G}$ if
$ V_i\left(\left(a_i, a_{-i}\right)\right) \leq V_i\left(\left(a_i^{\prime}, a_{-i}\right)\right) + \varepsilon$, for any  $i \in { [N]},  a_i^{\prime} \in \cA_i .$
\end{definition}

Definition \ref{def: NE} provides a unified definition of approximate Nash equilibrium for a general game $\cG$. When $\cG$ is a stochastic differential game and the set $\cA^{(N)}$ of admissible strategy profiles contains the set of open-loop controls or closed-loop controls, Definition \ref{def: NE} is consistent with the concepts of open-loop Nash equilibrium or closed-loop Nash equilibrium described in \cite[Chapter 2]{carmona2018probabilistic}. 

The following proposition shows that an approximate Nash equilibrium of an $\alpha$-potential game can be obtained by optimizing its corresponding $\alpha$-potential function. This is analogous to static potential games with potential functions.
The proof follows directly from Definitions \ref{def:a_potential_game} and \ref{def: NE} and hence is omitted.

\begin{prop}\label{prop:optimizer_and_NE}
Let $\cG$ be an $\alpha$-potential game for some $\alpha$ and  $\Phi$ be an $\alpha$-potential function.
For each $\varepsilon\ge 0$, if there exists $\overline \ba  \in \cA^{(N)} $ such that $\Phi(\overline \ba )\le \inf_{\ba \in \cA^{(N)} }\Phi(\ba)+\varepsilon$, then $\overline \ba  $ is an $( \alpha+\varepsilon) $-Nash equilibrium of $\mathcal{G}$.
\end{prop}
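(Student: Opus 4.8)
The plan is to verify the defining inequality of an $(\alpha+\varepsilon)$-Nash equilibrium (Definition~\ref{def: NE}) directly, by chaining the two hypotheses: the $\alpha$-potential property \eqref{eq:alpha_v_phi} and the near-minimality of $\overline{\ba}$. No approximation machinery is needed; the whole argument is a short sequence of inequalities, so the only thing demanding care is tracking the signs when the absolute value in \eqref{eq:alpha_v_phi} is removed.

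First I would fix an arbitrary player $i \in I_N$ and an arbitrary admissible deviation $a_i' \in \cA_i$, and write $\overline{\ba} = (\overline{a}_i, \overline{a}_{-i})$. The goal is to show $V_i\big((\overline{a}_i, \overline{a}_{-i})\big) \le V_i\big((a_i', \overline{a}_{-i})\big) + \alpha + \varepsilon$. Applying \eqref{eq:alpha_v_phi} with $a_i = \overline{a}_i$ and this choice of $a_i'$ and $a_{-i} = \overline{a}_{-i}$, and retaining only the lower bound implied by the absolute value, yields
\begin{equation*}
V_i\big((\overline{a}_i, \overline{a}_{-i})\big) - V_i\big((a_i', \overline{a}_{-i})\big)
\le \Phi\big((\overline{a}_i, \overline{a}_{-i})\big) - \Phi\big((a_i', \overline{a}_{-i})\big) + \alpha.
\end{equation*}

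Next I would bound the potential difference on the right-hand side. Since $(a_i', \overline{a}_{-i}) \in \cA^{(N)}$, the near-optimality hypothesis $\Phi(\overline{\ba}) \le \inf_{\ba \in \cA^{(N)}} \Phi(\ba) + \varepsilon$ gives $\Phi\big((\overline{a}_i, \overline{a}_{-i})\big) = \Phi(\overline{\ba}) \le \Phi\big((a_i', \overline{a}_{-i})\big) + \varepsilon$, hence $\Phi\big((\overline{a}_i, \overline{a}_{-i})\big) - \Phi\big((a_i', \overline{a}_{-i})\big) \le \varepsilon$. Substituting into the displayed inequality produces $V_i\big((\overline{a}_i, \overline{a}_{-i})\big) - V_i\big((a_i', \overline{a}_{-i})\big) \le \alpha + \varepsilon$. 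Because $i$ and $a_i'$ were arbitrary, this is exactly the condition of Definition~\ref{def: NE} with tolerance $\alpha + \varepsilon$, completing the argument.

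There is no genuine obstacle here; the proposition is a formal consequence of the two definitions, which is why the authors omit it. The one place a careless write-up could slip is the sign convention: since players \emph{minimize} their value functions, the $\alpha$-potential bound must be used to lower-bound $\Phi((a_i',\overline{a}_{-i})) - \Phi(\overline{\ba})$ (equivalently upper-bound the value gain from deviating), and the near-minimality of $\overline{\ba}$ must push in the same direction. Keeping both inequalities oriented toward an upper bound on $V_i(\overline{\ba}) - V_i((a_i',\overline{a}_{-i}))$ is the entire content of the proof.
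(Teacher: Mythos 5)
Your proof is correct and is exactly the direct definitional argument the paper has in mind when it states that the proposition "follows directly from Definitions \ref{def:a_potential_game} and \ref{def: NE} and hence is omitted." The sign-tracking is handled properly: the one-sided use of the absolute-value bound and the near-minimality of $\overline{\ba}$ both point toward the upper bound $V_i(\overline{\ba}) - V_i\big((a_i',\overline{a}_{-i})\big) \le \alpha + \varepsilon$, which is precisely the $(\alpha+\varepsilon)$-Nash condition.
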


\subsection{Characterization of $\alpha$-potential games via linear derivatives}

Proposition \ref{prop:optimizer_and_NE} highlights the importance of explicitly characterizing an $\alpha$-potential function for a given game and the parameter $\alpha$.

For the special class of potential games (i.e., $\alpha=0$) with  finite-dimensional strategy class \cite{monderer1996potential, leonardos2021global, hosseinirad2023general},  it is well known that a game is a potential game if the { objective function}s are twice continuously (Fr\'{e}chet) differentiable in policy parameters and have symmetric second-order derivatives. 
More precisely, 
        consider a game $\cG =({ [N]}, (\cA_i)_{i\in { [N]}}, (V_i)_{i\in { [N]}})$
        where for all $i\in { [N]}$,
        $\cA_i$  is an interval.
Suppose that for all $i\in { [N]}$, $V_i:\cA^{(N)}\to \sR$ is twice continuously differentiable.
Then by \cite[Theorem 4.5]{monderer1996potential},   
$\cG$ is a potential game  if and only if $ {\partial^2_{a_ia_j} V_i} = {\partial^2_{a_ja_i} V_j} $ for all $i,j\in { [N]}$,  { and a form of potential function is given.}

In this section, we will { provide an analytical framework} to construct the parameter $\alpha$ and the associated $\alpha$-potential functions based on linear derivatives of the { objective function}s with respect to strategies as introduced in 
\cite{guo2024towards}.
Let us start by recalling the linear derivative of a scalar-valued function with respect to unilateral deviations of strategies. For each $i\in { [N]}$, we denote by $\spn(\cA_i)$ the vector space of all linear combinations of strategies in $\cA_i$, i.e., 
{$ 
    \spn(\cA_i) = \left\{\sum_{\ell =1}^m c_\ell a^{(\ell)}_i \mid c_\ell \in \R, a^{(\ell)}_i \in \cA_i, \text{ for any } l = 1,2,\cdots, m, \text{ and }  m\in \sN\right\}.
$ 
}

\begin{definition} 
\label{def:linear_derivative}
Let $\cA^{(N)}=\prod_{i \in { [N]}} \cA_i$ be a convex set and $f: \cA^{(N)} \rightarrow \mathbb{R}$. For each $i \in { [N]}$, we say $f$ has a linear derivative with respect to $\cA_i$, if 
there exists $\frac{\delta f}{\delta a_i}: \cA^{(N)} \times \spn\left(\cA_i\right) \rightarrow \mathbb{R}$, such that for all $\ba=\left(a_i, a_{-i}\right) \in \cA^{(N)}, \frac{\delta f}{\delta a_i}(\ba ; \cdot)$ is linear and
\begin{equation}
\label{eq:linear_derivative_condition}
\lim _{\varepsilon \searrow 0} \frac{f\left(\left(a_i+\varepsilon\left(a_i^{\prime}-a_i\right), a_{-i}\right)\right)-f(\ba)}{\varepsilon}=\frac{\delta f}{\delta a_i}\left(\ba ; a_i^{\prime}-a_i\right), \quad \forall a_i^{\prime} \in \cA_i.
\end{equation}
 
Moreover, for each $i, j \in { [N]}$, we say $f$ has second-order linear derivatives with respect to $\cA_i \times \cA_j$, if (i) for all $k \in\{i, j\}, f$ has a linear derivative $\frac{\delta f}{\delta a_k}$ with respect to $\cA_k$, and (ii) for all $(k, \ell) \in\{(i, j),(j, i)\}$, there exists $\frac{\delta^2 f}{\delta a_k \delta a_{\ell}}: \cA^{(N)} \times \operatorname{span}\left(\cA_k\right) \times \operatorname{span}\left(\cA_{\ell}\right) \rightarrow \mathbb{R}$ such that for all $\ba \in \cA^{(N)}, \frac{\delta^2 f}{\delta a_k \delta a_{\ell}}(\ba, \cdot, \cdot)$ is bilinear and for all $a_k^{\prime} \in \operatorname{span}\left(\cA_k\right), \frac{\delta^2 f}{\delta a_k \delta a_{\ell}}\left(\cdot ; a_k^{\prime}, \cdot\right)$ is a linear derivative of $\frac{\delta f}{\delta a_k}\left(\cdot ; a_k^{\prime}\right)$ with respect to $\cA_{\ell}$. We refer to $\frac{\delta^2 f}{\delta a_i \delta a_j}$ and $\frac{\delta^2 f}{\delta a_j \delta a_i}$ as second-order linear derivatives of $f$ with respect to $\cA_i \times \cA_j$.
\end{definition}

{
\begin{remark}\label{remark-derivatives}

Linear differentiability, as defined in Definition \ref{def:linear_derivative}, is weaker than 
Fr\'echet/G\^{a}teaux differentiability,
as it avoids  introducing
a topology on the strategy classes
 $\mathcal{A}_i$. 

Recall that 
a  function $f: O \subset X \rightarrow \mathbb{R}$ defined on an \emph{open subset} $O$ of a locally convex topological vector space $X$  is G\^{a}teaux differentiable if, for all $u  \in V$,   $Df(u;v)=\lim_{\varepsilon\to 0}\frac{f(u+\varepsilon v)-f(u)}{\varepsilon } $ exists   for all $v \in V$.
If in addition $(X,\|\cdot\|_X)$ is a normed vector space, $X\ni v\mapsto Df(u;v)\in \sR$ is a bounded linear operator,
and 
$\lim_{\|v\|_X\to 0}\frac{|f(u+ v)-f(u)-Df(u;v)|}{\|v\|_X  }=0$, then $f$ is Fr\'echet differentiable. 
Note that both Fr\'echet and G\^ateaux derivatives are defined only in the interior of a set $O$, as their definitions require that $  u + \varepsilon v $  remains within the domain $O$   for all sufficiently small $ \varepsilon $. This necessitates a topological structure on $O$.

Definition \ref{def:linear_derivative} defines  derivatives using convex combinations within the strategy class, without the need of a topology. Therefore, it can be applied to analyze games with any convex strategy class. 
Moreover, 
  if $f$ has  a 
G\^{a}teaux derivative $Df$ with respect to $\cA_i$,
then $f$ also has a linear derivative given by   
$\frac{\delta f}{\delta a_i}(\ba; a'_i-a_i)= Df(\ba; a'_i-a_i)$.
 
\end{remark}

}

 
Note that Definition \ref{def:linear_derivative} generalizes the notion of linear derivative for functions of Markov policies introduced in \cite{guo2024towards} to functions defined on arbitrary convex strategy classes. It enables us to construct an $\alpha$-potential function for a game $\cG$ using the linear derivative of its { objective function}s, with $\alpha$ bounded by the difference between the second-order linear derivatives of the { objective function}s.

\begin{theorem}\label{thm:potential_hessian}
    Let $\cG$ be a game whose set of strategy profiles $\cA^{(N)}$ is convex. Suppose that for all $i, j \in { [N]}$, the { objective function} $V_i$ has second-order linear derivatives with respect to $\cA_i \times \cA_j$ such that for all $\bz=\left(z_j\right)_{j \in { [N]}}\in \cA^{(N)}$, $\ba=\left(a_j\right)_{j \in { [N]}} \in \cA^{(N)}$, $a_i^{\prime}, \tilde{a}_i^{\prime} \in \cA_i$ and $a_j^{\prime \prime} \in \cA_j$,
    \begin{enumerate}[(1)]
        \item  $\underset{r, \vep \in[0,1]}{\sup}\left|\frac{\delta^2 V_i}{\delta a_i \delta a_j}\left(\bz+r\left(\ba^{\vep}-{ \bz }\right) ; a_i^{\prime}, a_j^{\prime \prime}\right)\right|<\infty$, where $\ba^{\vep}:=\left(a_i+\vep\left(\tilde{a}_i^{\prime}-a_i\right), a_{-i}\right)$; \label{item: boundedness}
        \item 
        \label{item:continuity}
        $[0,1]^N \ni \varepsilon \mapsto \frac{\delta^2 V_i}{\delta a_i a_j}\left(\bz+\varepsilon \cdot(\ba-\bz) ; a_i^{\prime}, a_j^{\prime \prime}\right)$ is continuous at 0 , where $\bz+\varepsilon \cdot (\ba- \bz):=\left(z_i+\varepsilon_i\left(a_i-z_i\right)\right)_{i \in { [N]}}$.
    \end{enumerate}
    Fix $\bz \in \cA^{(N)}$ and define
    $\Phi:\cA^{(N)} \to \sR$ by 
    \begin{equation}\label{eq:Phi}
        \Phi(\ba)=\int_0^1 \sum_{j=1}^N \frac{\delta V_j}{\delta a_j}\left(\bz+r(\ba-\bz) ; a_j-z_j\right) \mathrm{d} r.
    \end{equation}
    Then $\Phi$ is an $\alpha$-potential function of $\cG$ with 
    \begin{equation}
    \label{eq:alpha_general}
         \alpha \le  { 2} \sup_{i \in { [N]}, a'_i\in \cA_i, \ba ,\ba''   \in \cA^{(N)}}
  \sum_{j=1}^N\left| \frac{\delta^2 V_i}{\delta a_i \delta a_j} \left(\ba ; a_i^{\prime} , a''_j  \right) 
-\frac{\delta^2 V_j}{\delta a_j \delta a_i}\left(\ba ; a''_j , a_i^{\prime} \right)  
 \right|.
    \end{equation}
\end{theorem}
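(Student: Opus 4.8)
The plan is to reduce the defining inequality \eqref{eq:alpha_v_phi} to a double integral of the \emph{asymmetry} of the second-order linear derivatives, and then estimate it termwise. Fix $i \in I_N$, $a_i, a_i' \in \cA_i$ and $a_{-i} \in \cA_{-i}^{(N)}$, write $\ba = (a_i, a_{-i})$, $\ba' = (a_i', a_{-i})$, $w = a_i' - a_i \in \spn(\cA_i)$, and for $s \in [0,1]$ set $\ba^s = (a_i + s w, a_{-i})$, which lies in $\cA^{(N)}$ by convexity. Since $\ba$ and $\ba'$ differ only in the $i$-th coordinate, the first step is to represent both increments along this segment by the fundamental theorem of calculus,
\begin{equation*}
V_i(\ba') - V_i(\ba) = \int_0^1 \frac{\delta V_i}{\delta a_i}(\ba^s; w)\, \d s, \qquad \Phi(\ba') - \Phi(\ba) = \int_0^1 \frac{\delta \Phi}{\delta a_i}(\ba^s; w)\, \d s,
\end{equation*}
so that it suffices to control the integrand $\frac{\delta V_i}{\delta a_i}(\ba^s; w) - \frac{\delta \Phi}{\delta a_i}(\ba^s; w)$.

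The heart of the argument is a pointwise identity for the linear derivative of $\Phi$. Writing $\gamma(r) = \bz + r(\ba - \bz)$, I would differentiate \eqref{eq:Phi} under the integral sign in the direction $w$: the $i$-th component of the base point $\gamma(r)$ moves in direction $r w$, and in the $j=i$ summand the direction argument $a_i - z_i$ additionally moves in direction $w$, which (after pulling out $r$ by bilinearity) yields
\begin{equation*}
\frac{\delta \Phi}{\delta a_i}(\ba; w) = \int_0^1 \frac{\delta V_i}{\delta a_i}(\gamma(r); w)\, \d r + \int_0^1 r \sum_{j=1}^N \frac{\delta^2 V_j}{\delta a_j \delta a_i}(\gamma(r); a_j - z_j, w)\, \d r.
\end{equation*}
On the other hand, applying the fundamental theorem of calculus in $r$ to the map $r \mapsto r\,\frac{\delta V_i}{\delta a_i}(\gamma(r); w)$ gives $\frac{\delta V_i}{\delta a_i}(\ba; w) = \int_0^1 \big[\frac{\delta V_i}{\delta a_i}(\gamma(r); w) + r \sum_j \frac{\delta^2 V_i}{\delta a_i \delta a_j}(\gamma(r); w, a_j - z_j)\big]\,\d r$. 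Subtracting the two displays collapses the first-order terms and leaves exactly the asymmetry:
\begin{equation*}
\frac{\delta V_i}{\delta a_i}(\ba; w) - \frac{\delta \Phi}{\delta a_i}(\ba; w) = \int_0^1 r \sum_{j=1}^N \Big[\frac{\delta^2 V_i}{\delta a_i \delta a_j}(\gamma(r); w, a_j - z_j) - \frac{\delta^2 V_j}{\delta a_j \delta a_i}(\gamma(r); a_j - z_j, w)\Big]\d r.
\end{equation*}

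To finish, I would apply this identity with $\ba$ replaced by $\ba^s$ (so $\gamma$ becomes $\gamma^s(r) = \bz + r(\ba^s - \bz)$), integrate over $s \in [0,1]$, and take absolute values. Using bilinearity of the second-order linear derivatives, the difference directions $w = a_i' - a_i$ and $a_j^s - z_j$ split into genuine strategies, expanding the bracketed asymmetry into four terms of the form $\frac{\delta^2 V_i}{\delta a_i \delta a_j}(\gamma^s(r); p, q) - \frac{\delta^2 V_j}{\delta a_j \delta a_i}(\gamma^s(r); q, p)$ with $p \in \{a_i, a_i'\} \subset \cA_i$ and the $q$'s forming components of $\bz$ or $\ba^s$ in $\cA^{(N)}$; summing over $j$, bounding the weight $r$ by $1$, and integrating the resulting constant over $(s,r)\in[0,1]^2$ then produces the constant $4$ and the supremum in \eqref{eq:alpha_general}. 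The main obstacle is not the algebra but the calculus justifications: establishing the fundamental theorem of calculus for the one-sided (directional) linear derivatives along these segments, and legitimizing the interchange of differentiation and integration used to obtain $\frac{\delta \Phi}{\delta a_i}$ (which in particular shows $\Phi$ admits the linear derivative used in the first step). This is precisely where the uniform boundedness hypothesis~(\ref{item: boundedness}) supplies the domination needed for differentiation under the integral, and the continuity hypothesis~(\ref{item:continuity}) upgrades the one-sided derivatives to genuine derivatives so that the fundamental theorem of calculus applies; checking that these conditions suffice at each occurrence is the delicate part of the proof.
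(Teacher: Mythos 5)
Your proposal is correct and follows essentially the same route as the paper's proof: differentiate $\Phi$ under the integral sign (with Condition~(\ref{item: boundedness}) providing the domination and Condition~(\ref{item:continuity}) the continuity), identify $\frac{\delta \Phi}{\delta a_i} - \frac{\delta V_i}{\delta a_i}$ with the weighted integral of the asymmetry via the product-rule/integration-by-parts identity for $r \mapsto r\,\frac{\delta V_i}{\delta a_i}(\gamma(r);w)$, then integrate along the deviation segment and expand the difference directions by bilinearity to obtain the factor $4$ and the supremum in \eqref{eq:alpha_general}. The paper organizes the same computation slightly differently (computing $\frac{\d}{\d\vep}\Phi(\ba^\vep)\big|_{\vep=0}$ directly and invoking its Lemmas \ref{lemma:derivative_line} and \ref{lemma:multi-dimension_derivative} for the calculus justifications), but the decomposition, the residual term, and the final estimate coincide with yours.
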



 Theorem \ref{thm:potential_hessian} constructs an $\alpha$-potential function using the objective functions' linear derivatives, which exist for general strategy classes without requiring a topological structure.
 The corresponding 
    $\alpha$  is quantified  explicitly   in terms of the magnitude of the asymmetry of the second-order linear derivatives, and  $\alpha=0$ recovers  the symmetric case in these earlier works \cite{monderer1996potential, leonardos2021global, hosseinirad2023general}. 
{  The base-point action $\bz$ ensures that $\bz + r(\ba - \bz)$ remains in $\cA^{(N)}$, so that the linear derivatives are well-defined. The specific choice of $\bz$ will not change the upper bound of $\alpha$, as \eqref{eq:alpha_general} takes the supremum over all strategies, but it may lead to different minima of $\alpha$-potential functions $\Phi$.}
    The proof of Theorem \ref{thm:potential_hessian} is given in Section \ref{sec:proof_theorem_hessian}.

{

 The $ \alpha$-potential function \eqref{eq:Phi} involves aggregating all players' strategies and the derivatives of their objective functions linearly through the parameter $r$.
When the objective functions are sufficiently regular, analogue $\alpha$-potential functions can be constructed through nonlinear aggregation of all players' strategies.    
Indeed, we have
    \begin{prop}
 Suppose  that 
 for all $i\in [N]$,
 $\cA_i$ is an open subset of a  normed vector space,
 and 
    the objective function $V_i$ is  continuously Fr\'echet differentiable in $\cA_i$. Fix $\bz \in \cA^{(N)}$, and for all $i\in [N]$, let $p_i:[0,1]\times \cA_i\to \cA_i$ be a continuously differentiable reparameterization of  $\cA_i$ such that 
for all $a_i\in \cA_i$, $p_i(0,a_i)=z_i$ and 
$p_i(1,a_i)=a_i$. 
Then one can define 
\begin{equation}\label{eq:Phi_p}
    \Phi(\ba) =\int_0^1 \sum_{i=1}^N ({\partial_{a_i} V_i}) (p(r, \ba)) \partial_t p_i(r ,a_i) \d r,
\end{equation}
where $p(r,\ba) \coloneqq  (p_i(r,a_i))_{i\in [N]}$,
and   ${\partial_{a_i} V_i}$ is the   
 Fr\'echet derivative of $V_i$.
 \end{prop}
Consequently, if we assume further regularity of $(V_i)_{i\in [N]}$, the corresponding $\alpha$ 
 for \eqref{eq:Phi_p}
 can be quantified in terms of the asymmetry in second-order derivatives of   
 objective functions
 and the derivatives of the parameterization $p$
 as in Theorem \ref{thm:potential_hessian}.

 It is worth noting that this $\alpha$-potential function \eqref{eq:Phi_p} extends the characterization of potential functions for static games with finite-dimensional strategy spaces as established in \cite[Theorem 4.5]{monderer1996potential},
and coincides the expression  \eqref{eq:Phi} 
by setting   
  $\frac{\delta V_i}{\delta a_i}(\ba; a'_i)=({\partial_{a_i} V_i})  (\ba) a'_i $
  and $p_i(r,a_i)=z_i+r (a_i-z_i)$.
When the game $\cG$ is a potential game (i.e., $\alpha=0$), any potential function is given by \eqref{eq:Phi_p} (or \eqref{eq:Phi}) up to an additive constant, as all potential functions share the same gradient and are therefore equivalent up to a constant. 
 

For ease of exposition and clarity,  we focus on the $\alpha$-potential function given in \eqref{eq:Phi} in the subsequent analysis.
As we will see, for stochastic differential games, 
the adoption of linear derivatives in  \eqref{eq:Phi} simplifies the analysis and avoids the tedious verification of the Fr\'echet differentiability of
 $(V_i)_{i\in [N]}$. 
Moreover, 
minimizing \eqref{eq:Phi} will be shown as a class of  conditional McKean-Vlasov control problem.

 }

 \section{Stochastic differential game and its  $\a$-potential function}  
\label{sec:general_stochastic_game}

This section characterizes  $\alpha$-potential function \eqref{eq:Phi} given in  Theorem \ref{thm:potential_hessian} 
for 
 stochastic
differential
games whose state
dynamics is a controlled diffusion. 
Under  suitable regularity conditions,
the linear derivative of { objective function}s are   characterized through
  the sensitivity processes of the state dynamics with respect to controls. 

 Let $T\in (0,\infty)$, 
 let 
   $(\Omega,\cF,\sP)$ 
be a complete probability space on which an $m$-dimensional Brownian motion $W=(W_t)_{t\ge 0}$ is defined,
and 
let  $\sF $  be
 the  $\sP$-completion of the  filtration generated by  $W$.
For each $p\ge 1$ and Euclidean space $(E,|\cdot|)$, let $\cS^p( E)$  be the space of $E$-valued $\sF$-progressively measurable processes $X:\Omega\times [0,T]\to E$  satisfying $\|X\|_{\cS^p(E)}=\sE[\sup_{s\in [0,T]}|X_s|^p]^{1/p}<\infty$,
and let $\cH^p(E)$  be the space of $E$-valued $\sF$-progressively measurable processes $X:\Omega\times [0,T]\to E$  satisfying $\|X\|_{\cH^p(E)}=\sE[\int_0^T|X_s|^p\d s]^{1/p}<\infty$.
With a slight abuse of notation, for any $m,n\in \sN$,
we identify the product spaces $ \cS^p(\sR^n)^m$ and  $\cH^p(\sR^n)^m$ with 
$\cS^p(\sR^{mn})$ and 
$\cH^p(\sR^{mn})$, respectively.

Consider the differential game 
$\cG_{\rm diff} $ defined as follows: 
let ${ [N]}=\{1,\ldots,N\} $, 
and for each  $i\in { [N]}$, let 
$A_i\subset \sR^n $ be a convex  set, 
and 
let 
$\cA^i$ be the set of processes $u_i\in   \cH^2(\sR^n)$
taking values in   $A_i$, 
 representing the set of admissible (open-loop) controls of player $i$.
For each $ \bu=(u_i)_{i\in { [N]}} \in \cH^2(\sR^{Nn})$, let  ${\bX}^{ \bu} = (X^{\bu}_{i})_{i=1}^N $ be the associated  state process governed by the following dynamics: 
for all $i\in { [N]}$ and $t\in [0,T]$,
\begin{equation} 
\label{eq:X_i_general} 
     \d X_{t,i}=  b_i(t,  
     {\bX}_{t }, \bu_{t})    \d t+\sum_{k=1}^m \sigma_{ik}(t,  {\bX}_{t }, \bu_{t}) \d {W}^k_t, 
    \quad X_{0,i}=x_i,
\end{equation}
where $ x_i\in \sR^d$ is a given initial state,
$  b_i :[0,T]\times  \sR^{Nd }\times \sR^{Nn }\to \sR^d $ 
and 
$\sigma_i=(\sigma_{i1},\ldots, \sigma_{im}) :[0,T]\times  \sR^{Nd }\times \sR^{Nn }\to \sR^{d\times m}$
are    given measurable  functions, 
and $W=({W}^k)_{k=1}^m $ is an $m$-dimensional $\sF$-Brownian motion on the space $(\Omega, \cF,\sP)$.
The { objective function} $V_i:\cA^{(N)}\subset \cH^2(\sR^{Nn })\to \sR$ of player $i$ is given by
\begin{align}
\label{eq:value_i_general}
V_i(\bu)=\sE\left[\int_0^T f_i(t, \bX^{\bu}_t, \bu_t)\,\d t+g_i( \bX^{\bu}_T)\right],
\end{align}
where $f_i:[0,T]\times \sR^{N d} \times \sR^{Nn } \to \sR$ and $g_i:\sR^{Nd } \to \sR$ are given measurable functions. 
Player $i$ aims to minimize \eqref{eq:value_i_general} over all admissible controls in $  \cA_i$.

We impose the following    regularity condition  on the coefficients of \eqref{eq:X_i_general}-\eqref{eq:value_i_general}. It guarantees   for each $\bu\in \cH^2(\sR^{Nn})$, \eqref{eq:X_i_general} admits a unique 
strong solution ${\bX}^{ \bu} \in \cS^2(\sR^{Nd})$, 
and \eqref{eq:value_i_general}  is well-defined. 
\begin{assumption}
\label{assum:regularity_general}
    For all $i\in { [N]}$, $A_i$ is a nonempty convex subset of $\sR^n$.
\begin{enumerate}[(1)]
\item  For all  $t\in [0,T]$,
    $(x,u)\mapsto (b_i(t,x,u), \sigma_i(t,x,u), f_i(t,x,u), g_i(x))$ is twice continuously differentiable.
\item    
For all $\varphi\in \{b_i,\sigma_i\}$,
$\sup_{t\in [0,T]} |\varphi(t,0,0)| <\infty$,
and $(x,u)\mapsto \varphi(t,x,u)$   has bounded first and second derivatives   (uniformly in $t$).
\item 
\label{item:f_g_general}
$\sup_{t\in [0,T]}(|f_i(t,0,0)|+|(\partial_{(x,u)}f_i)(t,0,0)|)<\infty$,
and 
$(x,u)\mapsto (f_i(t,x,u), g_i(x))$     has bounded   second derivatives (uniformly in $t$).
\end{enumerate}
 
\end{assumption}


We proceed to characterize the $\alpha$-potential  function \eqref{eq:Phi} for the   game $\cG_{\rm diff}$.
This is achieved by 
expressing  the linear derivatives of the { objective function}   \eqref{eq:value_i_general}  using the sensitivity processes of the state dynamics \eqref{eq:X_i_general}. 
In this following, we present only the first-order linear derivatives, as these are sufficient to characterize the $\alpha$-potential function. The second-order linear derivatives are given in Section \ref{sec:open_loop}, which will be used to quantify the constant $\alpha$ defined in  \eqref{eq:alpha_general}.

We start by introducing the sensitivity of the controlled state with respect to a single player's  control.
For each 
   $ \bu \in \cH^2(\sR^{Nn})$,  
   let $\bX^\bu $
   be the state process satisfying  \eqref{eq:X_i_general}. For each 
   $h\in { [N]}$ and   $u'_h\in \cH^{2}(\sR^n)$, 
define  $\bY^{\bu,u'_h} \in \cS^2(\sR^{Nd})$ 
as the solution of the following dynamics: 
for all  $t\in [0,T]$ and $i\in { [N]}$,
\begin{align} 
\label{eq:Y_h_i_general}
\begin{split}
    \mathrm{d} Y^h_{t,i} &=((\partial_x{b}_i) \left(t,  \bX^{\bu}_t, \bu_t \right) \bY^{h}_{t}
     + (\partial_{u_h}{b}_i)\left(t,  \bX^{\bu}_t,  \bu_t \right) u'_{t,h})\mathrm{d} t
     \\
     &\quad +\sum_{k=1}^m\left((\partial_x{\sigma}_{ik}) \left(t,  \bX^{\bu}_t, \bu_t \right)  \bY^{h}_{t}
     + (\partial_{u_h}{\sigma}_{ik})\left(t,  \bX^{ \bu}_t,  \bu_t \right) u_{t,h}'\right)\mathrm{d} W^k_t
        , \quad Y^h_{0,i}=0.
 \end{split}
\end{align}
By  \cite[Lemma 4.7]{carmona2016lectures},
for all  $u'_h\in \cH^2(\sR^n)$, 
$   
\lim_{\varepsilon\searrow 0}
\sE\Big[\sup_{t\in [0,T]}
\left|\frac{1  }{\varepsilon}(\bX^{\bu^\varepsilon}_t -\bX^{\bu}_t)
-
\bY^{\bu,u'_h}_t
\right|^2
\Big]
=0,
$   
where $\bu^\varepsilon =(u_h+\varepsilon u'_h ,u_{-h})$ for all $\varepsilon\in (0,1)$.
That is,
in the $L^2$ sense,
$\bY^{\bu,u'_h}$ is the derivative  of the controlled state $\bX^{\bu}$ when player $h$   varies  her control  in the direction $u'_h$.

Now, the linear derivatives of $V_i$ in 
\eqref{eq:value_i_general}
can be represented using the sensitivity processes given by  
\eqref{eq:Y_h_i_general}.
Indeed,  
for all $i, h\in { [N]}$, define the map
$\frac{\delta V_i}{\delta u_h}: \cA^{(N)} \times  \cH^2(\sR^n)  \rightarrow \mathbb{R}$ such that for all $\bu\in \cA^{(N)}$ and $u'_h\in \cH^2(\sR^n)$,
 \begin{equation}
 \label{eq:linear_derivative_general}
    \begin{aligned}
    & \frac{\delta V_i}{\delta u_h}\left(\bu ; u_h^{\prime}\right)
    \coloneqq \mathbb{E}\Bigg[
    \int_0^T
    \begin{pmatrix}
        \bY^{\bu, u'_h}_t \\
    u_{t,h}' 
    \end{pmatrix}^\top 
    \begin{pmatrix}
        \partial_x f_i \\
    \partial_{u_h} f_i
    \end{pmatrix}
     (t, \bX_t^{ \bu}, \bu_t  )\,
     \mathrm{d} t
     + (\partial_x g_i)^\top (\bX_T^{ \bu}) \bY^{\bu, u'_h}_T\Bigg]. 
    \end{aligned}  
\end{equation}
By 
the convexity of $\cA_h$ and 
\cite[Lemma 4.8]{carmona2016lectures},   
for all $u'_h\in \cA_h$,
$\lim _{\varepsilon \searrow 0} \frac{V_i\left(\bu ^\vep\right) -V_i(\bu)}{\varepsilon}=\frac{\delta V_i}{\delta u_i}\left(\bu ; u_h^{\prime}-u_h\right) 
$, 
where $\bu^\varepsilon =(u_h+\varepsilon (u'_h-u_h) ,u_{-h})$ for all $\varepsilon\in (0,1)$.
That is, 
$\frac{\delta V_i}{\delta u_h}$ is the linear derivative of $V_i$ with respect to $\cA_h$.  

Using
the expression \eqref{eq:linear_derivative_general} of $(\frac{\delta V_i}{\delta u_i })_{i\in { [N]}}$,
the following theorem 
characterizes the 
$\alpha$-potential function 
for the differential game $\cG_{\rm diff}$.


\begin{theorem}
\label{thm:potential_diff}
Consider the  game $\cG_{\rm diff}$  defined by \eqref{eq:X_i_general}-\eqref{eq:value_i_general}.
Suppose Assumption \ref{assum:regularity_general} holds.
For   any fixed  $\bz=(z_i)_{i\in { [N]}}\in \cA^{(N)}$,
the  function $\Phi:\cA^{(N)} \to \sR$  in  \eqref{eq:Phi}
can be expressed as
\begin{align}
\label{eq:potential_differential}
\begin{split}
\Phi(\bu)&  =\int_0^1      \sum_{i=1}^N  \mathbb{E}\Bigg[
    \int_0^T
    \begin{pmatrix}
        \bY^{ \bu^r,     u_{i}-z_{i}}_t \\
    u_{t,i}-z_{t,i} 
    \end{pmatrix}^\top 
    \begin{pmatrix}
        \partial_x f_i \\
    \partial_{u_i} f_i
    \end{pmatrix}
      (t, \bX_t^{\bu^r}, \bu^r_t   )\,
     \mathrm{d} t
     + (\partial_x g_i)^\top  (\bX_T^{ \bu^r} ) \bY^{\bu^r, u_{i}-z_{i}}_T\Bigg] \d r
 \end{split}
\end{align}
{with $\bu^r \coloneqq \bz+r(\bu-\bz)$}.

\end{theorem}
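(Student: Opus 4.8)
The plan is to obtain \eqref{eq:potential_differential} as a direct substitution of the first-order linear derivative representation \eqref{eq:linear_derivative_general} into the abstract formula \eqref{eq:Phi}. The passage between the two displays is essentially bookkeeping, so the bulk of the work lies in justifying that \eqref{eq:linear_derivative_general} is indeed the linear derivative and that the $r$-integral defining $\Phi$ is well-posed.

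First I would establish that, under Assumption \ref{assum:regularity_general}, the map \eqref{eq:linear_derivative_general} is the first-order linear derivative $\frac{\delta V_i}{\delta u_h}$ of $V_i$ with respect to $\cA_h$ in the sense of Definition \ref{def:linear_derivative}. This proceeds in three steps: (i) the linear SDE \eqref{eq:Y_h_i_general} admits a unique solution $\bY^{\bu,u_h'}\in\cS^2(\sR^{Nd})$, since its drift and diffusion are affine in $\bY^h$ with coefficients bounded by Assumption \ref{assum:regularity_general} and inhomogeneity driven by $u_h'\in\cH^2(\sR^n)$; (ii) by \cite[Lemma 4.7]{carmona2016lectures}, $\bY^{\bu,u_h'}$ is the $L^2$-limit of the difference quotients $\varepsilon^{-1}(\bX^{\bu^\varepsilon}-\bX^{\bu})$ as $\varepsilon\searrow 0$, with $\bu^\varepsilon=(u_h+\varepsilon(u_h'-u_h),u_{-h})$; and (iii) combining this with the boundedness of the second derivatives of $f_i$ and $g_i$ and invoking \cite[Lemma 4.8]{carmona2016lectures} shows that $\varepsilon^{-1}(V_i(\bu^\varepsilon)-V_i(\bu))$ converges to the right-hand side of \eqref{eq:linear_derivative_general} evaluated in the direction $u_h'-u_h$, which is precisely the condition \eqref{eq:linear_derivative_condition}. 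These are the facts recalled immediately before the theorem, so this step amounts to checking that the coefficients of \eqref{eq:X_i_general} meet the hypotheses of the cited lemmas.

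Next I would verify that the $r$-integral in \eqref{eq:Phi} is well-defined. Since each $A_i$ is convex and $\bu,\bz\in\cA^{(N)}$, the frozen control $\bu^r=\bz+r(\bu-\bz)$ lies in $\cA^{(N)}$ for every $r\in[0,1]$, so $\frac{\delta V_i}{\delta u_i}(\bu^r;u_i-z_i)$ is defined at each $r$ with direction $u_i-z_i\in\spn(\cA_i)$. Using the a priori $\cS^2$ and $\cH^2$ estimates for $\bX^{\bu^r}$ and $\bY^{\bu^r,u_i-z_i}$, uniform in $r\in[0,1]$ by Assumption \ref{assum:regularity_general}, together with the growth of $\partial_x f_i,\partial_{u_i}f_i,\partial_x g_i$, I would bound the integrand uniformly in $r$; measurability of $r\mapsto\frac{\delta V_i}{\delta u_i}(\bu^r;u_i-z_i)$ follows from the continuous dependence of the solutions on $r$. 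This guarantees that the integral exists and is finite, and that no interchange of integration order is needed since the expectation already sits inside the $r$-integral in both displays.

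Finally I would substitute \eqref{eq:linear_derivative_general} into \eqref{eq:Phi} with $\ba=\bu$, rename the summation index $j$ to $i$, and evaluate the linear derivative at the base point $\bu^r$ in the direction $u_i-z_i$. The sensitivity process in \eqref{eq:linear_derivative_general} then becomes $\bY^{\bu^r,u_i-z_i}$ and the state becomes $\bX^{\bu^r}$, producing exactly the integrand of \eqref{eq:potential_differential}, while the sum over $i$ and the outer integral over $r$ carry over unchanged. I expect the only genuine obstacle to be the analytic content of the second paragraph, namely establishing the $L^2$-differentiability of $\bu\mapsto\bX^{\bu}$ and the resulting chain rule for $V_i$; the transition from \eqref{eq:Phi} to \eqref{eq:potential_differential} itself is a routine substitution.
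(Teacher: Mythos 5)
Your proposal is correct and follows essentially the same route as the paper: the paper gives no separate proof of Theorem \ref{thm:potential_diff} precisely because it regards the result as immediate from the discussion preceding it, namely that \eqref{eq:linear_derivative_general} is the linear derivative of $V_i$ (justified via \cite[Lemmas 4.7 and 4.8]{carmona2016lectures} under Assumption \ref{assum:regularity_general}), so that \eqref{eq:potential_differential} is obtained by substituting \eqref{eq:linear_derivative_general} into \eqref{eq:Phi} at the base point $\bu^r=\bz+r(\bu-\bz)$ in the direction $u_i-z_i$. Your additional checks (well-posedness of the $r$-integral via convexity of $\cA^{(N)}$ and uniform-in-$r$ moment bounds) are sound and merely make explicit what the paper leaves implicit.
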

{
The expression 
\eqref{eq:potential_differential}
follows directly from 
\eqref{eq:Phi} for $\Phi(\bu)$
and \eqref{eq:linear_derivative_general}
for $\frac{\delta V_i}{\delta u_i }$,
by substituting 
$h$ with $i$,
$\bu $
with $\bz+r(\bu-\bz)$,
and $u'_h$ with $u_i-z_i$. 
}

The $\alpha$-potential function in \eqref{eq:potential_differential}
 can be alternatively expressed using backward stochastic differential
equations (BSDEs). The proof follows directly from {\cite[Corollary 4.11]{carmona2016lectures}.}   

\begin{prop}
Under the setting of Theorem \ref{thm:potential_diff}, $\Phi$ defined in \eqref{eq:potential_differential} can be 
equivalently written as 
\begin{align}
\label{eq:Phi_bsde}
\begin{split}
\Phi(\bu)&  =\int_0^1 \sum_{i=1}^N  \mathbb{E}\left[\int_0^T
( \partial_{u_i} \sH^i)^\top (t, \bX^{\bu^r}_t, \bu^r_t, \bG^{i,\bu^r}_t,  \bH^{i,\bu^r}_t) (u_{t,i}-z_{t,i})
  \mathrm{d} t \right]\d r 
  \quad \textnormal{with $\bu^r \coloneqq \bz+r(\bu-\bz)$},
 \end{split}
\end{align}
where   for each $i\in { [N]}$,
  $ 
\sH^i(t,x,u,\mathfrak{g})\coloneqq    b^\top (t,x , u )   \mathfrak{g} +\operatorname{tr}\big((\sigma\sigma^\top)(t,x,u)\mathfrak{h} 
\big)
+f_i(t,x,u) 
 $
 with 
 $b = \vcat(b_1, \ldots , b_N) $ 
 and 
 $\sigma =
 \vcat(\sigma_1,\ldots , \sigma_N)
$,\footnotemark 
 and for each 
  $\bu \in \cH^2(\sR^{Nn})$,
 $(\bG^{i,\bu}, \bH^{i,\bu})\in \cS^2(\sR^{Nd})\times \cH^2(\sR^{Nd \times m})$ satisfies  
\begin{align*}
\d \bG^{i}_t =-(\partial_x \sH^i)(t,\bX^{\bu}_t,\bu_t, \bG^{i}_t, \bH^{i}_t )\d t+ \bH^{i}_t  \d W_t,
\quad \forall t\in [0,T];
\quad \bG^i_T =(\partial_x g_i)(\bX^{\bu}_T).
\end{align*}

\footnotetext{
We denote by  
$\vcat(A_1, \ldots, A_N)\coloneqq (A_1^\top,\ldots, A_N^\top)^\top  $   the vertical concentration
of    matrices 
  $A_i\in \sR^{m_i\times n}$, $1\le i\le N$.
}
\end{prop}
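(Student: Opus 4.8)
The plan is to establish the identity termwise: for each fixed $r\in[0,1]$ and each $i\in I_N$, I would show that the $i$-th summand of the integrand in \eqref{eq:potential_differential} equals the $i$-th summand in \eqref{eq:Phi_bsde}, and then sum over $i\in I_N$ and integrate over $r$. Throughout I abbreviate $\bX:=\bX^{\bu^r}$, write $v:=u_i-z_i$ for the perturbation process, let $\bY:=\bY^{\bu^r,\,v}$ be the sensitivity process of \eqref{eq:Y_h_i_general} with $h=i$ and direction $u'_h=v$, and let $(\bG,\bH):=(\bG^{i,\bu^r},\bH^{i,\bu^r})$ solve the adjoint BSDE. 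Under Assumption \ref{assum:regularity_general} the driver $-(\partial_x\sH^i)(t,\bX_t,\bu^r_t,\cdot,\cdot)$ is affine in its $(\mathfrak g,\mathfrak h)$-arguments with bounded coefficients and the terminal datum $(\partial_x g_i)(\bX_T)$ is square integrable, so standard linear BSDE theory gives a unique $(\bG,\bH)\in\cS^2(\sR^{Nd})\times\cH^2(\sR^{Nd\times m})$, with $\cS^2$/$\cH^2$ norms of $\bX,\bY,\bG,\bH$ bounded uniformly in $r\in[0,1]$.

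The heart of the argument is the maximum-principle duality obtained by applying It\^o's formula to $t\mapsto\bG_t^\top\bY_t$. Since $\bY_0=0$ and the stochastic integrals are genuine martingales (by the uniform $\cH^2$ bounds), taking expectations removes the martingale part and leaves
\begin{equation*}
\sE\big[\bG_T^\top\bY_T\big]=\sE\!\int_0^T\!\Big(-(\partial_x\sH^i)^\top\bY_t+\bG_t^\top\big[(\partial_x b)\bY_t+(\partial_{u_i}b)\,v_t\big]+\sum_{k=1}^m(\bH_t^{(k)})^\top\big[(\partial_x\sigma^{(k)})\bY_t+(\partial_{u_i}\sigma^{(k)})\,v_t\big]\Big)\,\d t,
\end{equation*}
where all coefficients are evaluated at $(t,\bX_t,\bu^r_t)$ and $\sigma^{(k)},\bH_t^{(k)}$ denote the $k$-th columns of $\sigma$ and $\bH_t$. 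Expanding $\partial_x\sH^i$ from the definition of $\sH^i$ (in the convention consistent with $\bH\in\cH^2(\sR^{Nd\times m})$, namely $\partial_x\sH^i=(\partial_x b)^\top\bG+\sum_k(\partial_x\sigma^{(k)})^\top\bH_t^{(k)}+\partial_x f_i$), the two $(\partial_x b)$-terms and the two $(\partial_x\sigma^{(k)})$-terms contracted with $\bY_t$ cancel exactly; using $\bG_T=(\partial_x g_i)(\bX_T)$, the surviving drift is $-(\partial_x f_i)^\top\bY_t$ together with the control-direction terms. Rearranging and adding $\sE\int_0^T(\partial_{u_i}f_i)^\top v_t\,\d t$ to both sides yields
\begin{equation*}
\sE\!\int_0^T\!\big[(\partial_x f_i)^\top\bY_t+(\partial_{u_i}f_i)^\top v_t\big]\,\d t+\sE\big[(\partial_x g_i)^\top(\bX_T)\bY_T\big]=\sE\!\int_0^T\!(\partial_{u_i}\sH^i)^\top(t,\bX_t,\bu^r_t,\bG_t,\bH_t)\,v_t\,\d t,
\end{equation*}
because $\partial_{u_i}\sH^i=(\partial_{u_i}b)^\top\bG+\sum_k(\partial_{u_i}\sigma^{(k)})^\top\bH_t^{(k)}+\partial_{u_i}f_i$. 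The left-hand side is exactly the $i$-th summand of \eqref{eq:potential_differential} and the right-hand side the $i$-th summand of \eqref{eq:Phi_bsde}.

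To conclude, I would sum the termwise identity over $i\in I_N$ and integrate in $r$, using Fubini's theorem to interchange $\int_0^1\d r$ with the expectation and the finite sum, which is legitimate by the uniform-in-$r$ integrability noted above. The step I expect to require the most care is the diffusion cancellation: one must verify that the martingale integrand $\bH$ of the adjoint BSDE enters $\sH^i$ as precisely the argument $\mathfrak h$ conjugate to $\sigma$, so that the $\sigma$-gradient produced by $\partial_x\sH^i$ matches the cross-variation $\d\langle\bG,\bY\rangle_t$ term by term, and that the resulting stochastic integrals are true (not merely local) martingales. This bookkeeping is exactly the content of the duality computation in \cite[Lemma 4.8]{carmona2016lectures}, which the proof would mirror.
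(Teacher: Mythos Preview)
Your proposal is correct and follows essentially the same approach as the paper: the paper's proof is the one-line remark that ``the proof follows directly from It\^o's formula as in \cite[Lemma 4.8]{carmona2016lectures},'' and your argument is precisely the duality computation underlying that lemma, applying It\^o's formula to $t\mapsto \bG_t^\top\bY_t$ and using the cancellation between $(\partial_x\sH^i)^\top\bY_t$ and the drift/quadratic-variation terms of $\bY$. Your attention to the $\cS^2/\cH^2$ bounds ensuring the stochastic integrals are true martingales, and the uniform-in-$r$ integrability justifying Fubini, fills in exactly the routine details the paper omits.
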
 
 
   In the sequel, we adopt the representation \eqref{eq:potential_differential} 
   of $\alpha$-potential function in terms of    the sensitivity processes.
    A  detailed exploration of 
   the BSDE approach for $\alpha$-potential games is left for future work.

{

\section{Quantifying  $\alpha$ for  stochastic differential game} \label{sec:open_loop}

In this section, we   
 quantify  $\alpha$ in \eqref{eq:alpha_general} 
 for stochastic differential games based on the structure of the game. The analysis relies on characterizing the second-order derivatives of { objective function}s by utilizing the second-order sensitivity of the state dynamics with respect to the controls.
 
 Let $\cG_{\rm diff}  $ be the differential game defined in Section \ref{sec:general_stochastic_game}.
  For ease of exposition, in this section, we assume that 
  each player has one-dimensional state and control processes, 
with the drift of \eqref{eq:X_i_general} depending linearly on the control and the diffusion of \eqref{eq:X_i_general}  being independent of both the state and control. Similar analysis can be extended to sufficiently regular nonlinear drift and diffusion coefficients in a multidimensional setting.
  More precisely, for each
  $ \bu=(u_i)_{i\in { [N]}} \in \cH^2(\sR^N)$, 
let  ${\bX}^{ \bu} = (X^{\bu}_{i})_{i=1}^N \in \cS^2(\sR^N)$ be the associated  state process governed by the following dynamics: 
for all $i\in { [N]}$ and $t\in [0,T]$,
\begin{equation} 
\label{eq:X_i_u_i_open} 
     \d X_{t,i}=
     \left( b_i(t,  X_{t,i}, 
     {\bX}_{t })+  u_{t,i}  \right)   \d t+\sigma_i(t) \d {W}_t^i, 
    \quad X_{0,i}=x_i,
\end{equation}
where  $x_i\in \sR$,
$  b_i :[0,T]\times \sR\times   \sR^N \to \sR$ is a given sufficiently regular function, 
$\sigma_i :[0,T] \to  \sR$ is a given   measurable function, 
and $W=({W}^i)_{i \in { [N]}} $ is an $N$-dimensional $\sF$-Brownian motion.
Let $\cA^{(N)}\subset \cH^2(\sR^N)$ be a nonempty convex set, representing the joint control profiles of all players.  
Player $i$'s { objective function} $V_i:\cA^{(N)} \to \sR$  is given as in \eqref{eq:value_i_general}:
\begin{align}
\label{eq:value_i_open}
V_i(\bu)=\sE\left[\int_0^T f_i(t, \bX^{\bu}_t, \bu_t)\,\d t+g_i( \bX^{\bu}_T)\right],
\end{align}
where $f_i:[0,T]\times \sR^N \times \sR^N \to \sR$ and $g_i:\sR^N \to \sR$ are given measurable functions.

 Note that in \eqref{eq:X_i_u_i_open}, we have   expressed the dependence of $b_i$ on the private state $X^{\bu}_i$ and the population state $\bX^{\bu}$ separately. This separation allows for specifying the structure of the drift coefficient.
To this end, let 
$\mathscr{F}^{0,2}([0,T]\times \sR\times \sR^{N} ; \sR )$ be the vector space of  measurable functions
$\psi: [0,T]\times \sR\times \sR^{N}\to \sR$ such that  
\begin{enumerate}[(1)]
\item for all $t\in [0,T]$, $(x,y)\mapsto \psi(t,x,y)$ is twice continuously differentiable,
\item there exists  $L^\psi, L^\psi_y \ge 0$ such that 
for all $  (t, x,y) \in [0,T]\times  \sR\times \sR^{N}$ and $i,j\in { [N]}$,
  $|   \psi  (t,0,0)|\le L^\psi$,
  $| (\partial_x \psi ) (t,x,y)|\le L^\psi$, 
  $| (\partial^2_{xx} \psi ) (t,x,y)|\le L^\psi$, 
  $| (\partial_{y_i} \psi ) (t,x,y)|\le L^\psi_y/N$,
  $| (\partial^2_{xy_i} \psi ) (t,x,y)|\le L^\psi_y/N$, and
  $| (\partial^2_{y_iy_j} \psi ) (t,x,y)|\le \frac{1}{N} L^\psi_{y} \mathds{1}_{i=j} +\frac{1}{N^2} L^\psi_{y}  \mathds{1}_{i\not =j} $.
\end{enumerate}
For any $\psi=(\psi_i)_{i\in { [N]}}  \in \mathscr{F}^{0,2}([0,T]\times \sR\times \sR^{N};\sR)^N$, 
we write $L^\psi=\max_{i\in { [N]}} L^{\psi_i}$ and $L^\psi_y= \max_{i\in { [N]}} L^{\psi_i}_y $.

In the sequel, we impose the following regularity conditions on the coefficients of \eqref{eq:X_i_u_i_open}-\eqref{eq:value_i_open}.

\begin{assumption}
\label{assum:regularity}
    For all $i\in { [N]}$,
    $b_i\in \mathscr{F}^{0,2}([0,T]\times \sR\times \sR^{N};\sR)$,
      $\sigma_i\in L^\infty([0,T];\sR)$,
    and $f_i$ and $g_i$ satisfy the conditions in Assumption \ref{assum:regularity_general}\ref{item:f_g_general}.
    
\end{assumption}

 \begin{remark}  
\label{rmk:mean_field} 
For each $i\in { [N]}$, the condition    $b_i\in \mathscr{F}^{0,2}([0,T]\times \mathbb{R}\times \mathbb{R}^{N};\mathbb{R})$ implies 
the partial derivatives of $y\mapsto b_i(t,x,y)$ 
admit explicit decay rates 
in terms of $N$. 
This assumption naturally holds if each player's state depends on the empirical measure of the joint state process, i.e., the mean field interaction.
To see it, suppose that 
$b_i(t,x,y)=h\big(t,x,\frac{1}{N}\sum_{j=1}^N \delta_{y_j}\big)$ with $ (t,x,y)\in [0,T]\times \sR\times \sR^{N},$
for a measurable function $h:[0,T]\times \sR\times \cP_2(\sR)\to \sR$,
where  $\cP_2(\sR)$ is the space of probability measures on $\sR$ with second moments.
If $(x,\mu)\mapsto h(t,x,\mu)$ is  sufficiently regular,  
then by \cite[Propositions 5.35 and 5.91]{carmona2018probabilistic},
$  (\partial_{y_i} b_i ) (t,x,y)=\frac{1}{N}(\partial_\mu h)(t,x,\frac{1}{N}\sum_{j=1}^N \delta_{y_j})( y_i)$,
$  (\partial^2_{x y_i} b_i ) (t,x,y)=\frac{1}{N}(\partial_\mu\partial_{x} h)(t,x,\frac{1}{N}\sum_{j=1}^N \delta_{y_j})( y_i)$,
and 
  \begin{align*}
 (\partial^2_{y_iy_j}b_i ) (t,x,y)=\frac{1}{N}(\partial_v  \partial_\mu h)\left(t,x,\frac{1}{N}\sum_{\ell=1}^N \delta_{y_\ell}\right)( y_i)\delta_{i,j}+\frac{1}{N^2}
 (\partial^2_\mu h)\left(t,x,\frac{1}{N}\sum_{\ell=1}^N \delta_{y_\ell}\right)( y_i, y_j),
  \end{align*}
  where $( \partial_\mu h)(t,x,\mu)(\cdot)  $ (resp.~$   ( \partial_\mu \partial_{x} h)(t,x,\mu)(\cdot)  $) is Lions  derivative of $\mu\mapsto h(t,x,\mu)$
  (resp.~$ \mu\mapsto(\partial_{x} h)(t,x,\mu)$), 
  $(\partial_v \partial_\mu h)(t,x,\mu)(\cdot)$ is the derivative  of $ v\mapsto ( \partial_\mu h)(t,x,\mu)(v) $,
  and  $(\partial^2_\mu h)(t,x,\mu)(v, \cdot) $ is the Lions derivative of $ \mu\mapsto ( \partial_\mu h)(t,x,\mu)(v)$.
  Hence if $\partial_\mu h$, $ \partial_\mu \partial_{x} h$, $\partial_v \partial_\mu h$ and $\partial^2_\mu h$ are continuous and uniformly bounded, then $b_i \in \mathscr{F}^{0,2}([0,T]\times \sR\times \sR^{N};\sR)$ with a constant $L^{b_i}_y$ depending on the upper bounds of the Lions  derivatives but independent of $N$.

The dependence of the constant $L^b_y = \max_{i\in { [N]}}L^{b_i}_y$ on $N$ reflects the degree of coupling among all players'  state dynamics. 
For instance, 
if  $L^b_y$
remains bounded as $N\to \infty$,
then 
the state dynamics can have mean field type interactions.
Alternatively, 
if  $L^b_y =0$, then all players' states are decoupled.

 \end{remark}

To 
 quantify  the magnitude of the asymmetry of the second-order linear derivatives of { objective function}s \eqref{eq:value_i_open},
 hence  $\alpha$   in \eqref{eq:alpha_general},
 we characterize the linear derivatives  
using the sensitivity processes of \eqref{eq:X_i_u_i_open}. 
Observe that for  the state dynamics \eqref{eq:X_i_u_i_open}, 
the dynamics   \eqref{eq:Y_h_i_general} for 
the first-order sensitivity process 
$\bY^{\bu,u'_h} \in \cS^2(\sR^N)$ simplifies into  for all  $t\in [0,T]$, 
\begin{align} 
\label{eq:Y_h_i_open}
\begin{split}
    \d Y^h_{t,i}
    &=
    \bigg[ (\partial_x b_i)(t, X^\bu_{t,i}, \bX^\bu_{t})Y^h_{t,i}
    +
\sum_{j=1}^N (\partial_{y_j} b_i)(t, X^\bu_{t,i}, \bX^\bu_{t})Y^h_{t,j}  
    +\delta_{h,i} u'_{t,h}
    \bigg]
    \d t, 
    \quad Y^h_{0,i}=0; \quad \forall i\in { [N]},
 \end{split}
\end{align} 
where $\delta_{i,j}$  denotes the Kronecker delta such that 
$\delta_{i,j}=0$ if $i=j$ and $0$ otherwise.
We now 
 characterize the second-order sensitivity of the state process with respect to the changes in two players' controls.
For each $h, \ell \in { [N]}$ with $h\not = \ell$,
and each  $u'_h, u''_\ell \in  \cH^{4}( \sR)$,
  define 
${\bZ}^{\bu,u'_h,u''_\ell} \in \cS^2(\sR^N) $
as the solution of 
    the following dynamics: 
for all $i\in { [N]}$ and $t\in [0,T]$,
\begin{align} 
\label{eq:Z_h_ell_i_open}
\begin{split}
    \d Z^{h,\ell} _{t,i}
    &=
    \bigg[ 
  (\partial_x b_i)(t, X^\bu_{t,i}, \bX^\bu_{t}) Z^{h,\ell}_{t,i}
    +
\sum_{j=1}^N (\partial_{y_j} b_i)(t, X^\bu_{t,i}, \bX^\bu_{t})Z^{h,\ell}_{t,j}
    +\mathfrak{f}^{\bu,u'_h,u''_\ell}_{t,i}
    \bigg]
    \d t, 
    \quad Z^{h,\ell}_{0,i}=0,
 \end{split}   
\end{align}
where  
$\mathfrak{f}^{\bu,u'_h,u''_\ell}_{i}:\Omega\times [0,T]\to \sR$ is defined by 
\begin{align}
\begin{split}
\label{eq:f_phi_h_ell_open_loop}
\mathfrak{f}^{\bu,u'_h,u''_\ell}_{t,i}
& \coloneqq 
 \begin{pmatrix}
  Y^{\bu,u'_h}_{t, i} 
  \\
  \bY^{\bu,u'_h}_t
\end{pmatrix}^\top
\begin{pmatrix}
 \partial^2_{xx} b_i    & \partial^2_{xy} b_i
\\
\partial^2_{yx} b_i  & \partial^2_{yy} b_i
\end{pmatrix}(t, X^\bu_{t,i},\bX^\bu_t)
 \begin{pmatrix}
  Y^{\bu,u''_\ell}_{t,i}
  \\
  \bY^{\bu,u''_\ell}_t
\end{pmatrix},
\end{split}
\end{align}
and   $\bY^{\bu,u'_h}$  and  $\bY^{\bu,u''_\ell} $
  are defined as in   \eqref{eq:Y_h_i_open}.
Similar arguments as that for  
  \cite[Lemma 4.7]{carmona2016lectures} show that 
for all     $u'_h, u''_\ell \in  \mathcal{H}^{4}( \sR)$, 
$ 
\lim_{\varepsilon\searrow 0}
\sE\left[\sup_{t\in [0,T]}
\left|
\frac{1 }{\varepsilon}(\bY^{\bu^\varepsilon,u'_h}_t -\bY^{\bu,u'_h}_t)
-{\bZ}^{\bu, u'_h, u''_\ell}_t 
\right|^2
\right]
=0,
$  
where $\bu^\varepsilon =(u_\ell+\varepsilon u''_\ell ,u_{-\ell})$ for all $\varepsilon\in (0,1)$.
That is,
${\bZ}^{\bu, u'_h, u''_\ell }$ is the second-order derivative  of the   state $\bX^{\bu}$ when player $h$ first  varies  her control  in the direction  $u'_h$, and then player $\ell$ varies her control in the direction $u''_\ell$.

Now, the linear derivatives of $V_i$ in 
\eqref{eq:value_i_open}
can be represented using the sensitivity processes satisfying  
\eqref{eq:Y_h_i_open} and \eqref{eq:Z_h_ell_i_open}.
The first order linear derivative 
$ \frac{\delta V_i}{\delta u_h}$
of $V_i$
is given  as in 
\eqref{eq:linear_derivative_general}.
For the second-order linear derivatives, 
  for all $h,\ell\in { [N]}$,
define the map $\frac{\delta^2 V_i}{\delta u_h \delta u_{\ell}}: \cA^{(N)} \times \cH^4(\sR)\times \cH^4(\sR) \rightarrow \mathbb{R}$
such that for all $\bu \in \cA^{(N)}$ and $u'_h,u''_\ell\in \cH^4(\sR^4)$,
    \begin{equation}
\label{eq:2d_linear_derivative_open}
        \begin{aligned}
& \frac{\delta^2 V_i }{\delta u_h \delta u_{\ell}}\left(\bu ; u_h^{\prime}, u_{\ell}^{\prime \prime}\right)
 \\
& =\mathbb{E}\left[\int_0^T
\left(
\begin{pmatrix}
\bY^{\bu, u'_h}_t \\
 u'_{t,h} 
\end{pmatrix}^{\top}
\begin{pmatrix}
    \partial^2_{x x} f_i & \partial^2_{x u_\ell } f_i \\
\partial^2_{u_h x} f_i & \partial^2_{u_h u_\ell} f_i
\end{pmatrix}\left(t, \bX_t^{  \bu}, \bu_t \right)
\begin{pmatrix}
\bY^{\bu, u''_\ell}_t \\
u''_{t,\ell}    
\end{pmatrix}
+ (\partial_x f_i )^\top 
\left(t, \bX_t^{ \bu}, \bu_t \right)
\bZ^{\bu, u'_h, u_\ell''}_t
\right)
\mathrm{d} t\right] \\
&\quad  +\mathbb{E}\left[\left(
\bY^{\bu, u'_h}_T
\right)^{\top}\left(\partial^2_{x x} g_i\right)(\bX_T^{  \bu}) \bY^{\bu, u''_\ell}_T + \left(\partial_x g_i\right)^\top \left(\bX_T^{\bu}\right) \bZ^{\bu, u_h',u_\ell''}_T\right] .
\end{aligned}
    \end{equation}
Consequently, 
if $\cA_h$ and $\cA_\ell $ are convex subsets of $\cH^4(\sR)$, then by \cite[Lemma 4.8]{carmona2016lectures}, 
$\lim_{\varepsilon \searrow 0}
\frac{1}{\vep}
\big(\frac{\delta V_i}{\delta u_h}(\bu^\vep ; u_h^{\prime})
 - \frac{\delta V_i}{\delta u_h}(\bu ; u_h^{\prime})
\big)\\
=
\frac{\delta^2 V_i }{\delta u_h \delta u_{\ell}}(\bu ; u_h^{\prime}, u''_{\ell}-u_\ell)
$ for all $u'_h\in \cA_h$
and $u''_\ell \in \cA_\ell$,
where $\bu^\varepsilon =(u_\ell +\varepsilon (u''_\ell -u_\ell) ,u_{-\ell})$ for all $\varepsilon\in (0,1)$.
That is, 
$\frac{\delta^2 V_i}{\delta u_h \delta u_\ell }(\bu; u'_h, \cdot) $ is the linear derivative of $\bu  \mapsto \frac{\delta  V_i}{\delta u_h  }(\bu; u'_h)$ with respect to $\cA_\ell$, and hence the second-order linear derivative of $V_i$.

{ 
Before stating the theorem, we introduce a few constants that will be used in the analysis. For any \( i, j \in [N] \) with \( i \neq j \), we define $
\Delta^f_{i,j} = f_i - f_j$, $\Delta^g_{i,j} = g_i - g_j,$  
as well as the following three constants $C_{V,1}^{i,j}$ $C_{V,2}^{i,j}$ $C_{V,3}^{i,j}$,  depending  on the upper bounds of the first- and second-order derivatives of $\Delta^f_{i,j}$ and $\Delta^g_{i,j}$ in $(x,u)$:
\begin{align}
\label{eq:C_V_1_open} 
    C_{V,1}^{i,j} & \coloneqq \|   \partial^2_{x_ix_j } \Delta^f_{i,j }  \|_{L^\infty}  + \|   \partial^2_{x_i u_j } \Delta^f_{i,j }  \|_{L^\infty}  + \|   \partial^2_{u_i x_j } \Delta^f_{i,j }  \|_{L^\infty} + \|   \partial^2_{u_i u_j }\Delta_{i,j}^f \|_{L^\infty}+  \|   \partial^2_{x_ix_j } \Delta^g_{i,j }  \|_{L^\infty},
\\
\label{eq:C_V_2_open} 
  C_{V,2}^{i,j} 
   &   \coloneqq \sum_{ \ell\in { [N]}\setminus\{j\}}
  \| \partial^2_{ u_i x_\ell} \Delta_{i,j}^f \|_{L^\infty} 
+  \sum_{ h\in { [N]}\setminus\{i\}} \| \partial_{x_h u_j} \Delta_{i,j}^f \|_{L^\infty} 
+  \sum_{h\in \{i,j\} }  \left( \|( \partial_{x_h} \Delta^f_{i,j } ) (\cdot, 0, 0) \|_{L^2} + |( \partial_{x_h} \Delta^g_{i,j } ) ( 0)| \right) \notag\\
&\quad 
+  \sum_{h\in \{i,j\}, \ell\in { [N]}}  
 \left(\|  \partial^2_{x_hx_\ell } \Delta^f_{i,j } \|_{L^\infty} 
+\|  \partial^2_{x_h u_\ell } \Delta^f_{i,j } \|_{L^\infty} 
+\|  \partial^2_{x_hx_\ell } \Delta^g_{i,j } \|_{L^\infty} 
\right),
\\
     C_{V,3}^{i,j} & \coloneqq 
     \sum_{h\in { [N]}\setminus \{i,j\} }   
\left(\|( \partial_{x_h} \Delta^f_{i,j } ) (\cdot, 0, 0) \|_{L^2}
+ | (\partial_{x_h} \Delta^g_{i,j } ) ( 0)| \right)
  \nonumber  \\
    &\quad 
    + \sum_{\substack{h\in { [N]}\setminus \{i,j\}\\\ell \in { [N]}\setminus\{i,j \}} } 
 \left(\|  \partial^2_{x_h x_\ell } \Delta^f_{i,j } \|_{L^\infty} 
+\|  \partial^2_{x_h u_\ell } \Delta^f_{i,j } \|_{L^\infty} 
+\|  \partial^2_{x_h x_\ell } \Delta^g_{i,j } \|_{L^\infty} \right),
\label{eq:C_V_3_open} 
\end{align}
where $\|\cdot\|_\infty$ denotes the sup-norm norm.}
  
We are ready to present the upper bound of the  $\alpha$ defined in \eqref{eq:alpha_general} for general cost functions $(f_i,g_i)_{i\in { [N]}}$, \emph{without imposing any structural assumptions}.
\begin{theorem}
\label{thm:value_jacobian_open}
Suppose Assumption \ref{assum:regularity} holds.
Then for all   $ \bu\in \cH^2(\sR^N)   $ and 
$ u'_i, u''_j\in \cH^4(\sR)$,
    \begin{equation}
        \label{eq:upper_bound_difference}
        \left|\frac{\delta^2 V_i}{\delta u_i \delta u_j}\left(\bu ; u_i^{\prime}, u_j^{\prime \prime}\right)-\frac{\delta^2 V_j}{\delta u_j \delta u_i}\left(\bu ; u_j^{\prime \prime}, u_i^{\prime}\right)\right|
        \leq C \| u_i'\|_{\cH^4(\sR)} \| u_j''\|_{\cH^4(\sR)}  \left(C_{V,1}^{i,j} + L_y^b 
        \left(
        \frac{ 1}{N}C_{V,2}^{i,j} + \frac{ 1}{N^2} C_{V,3}^{i,j}\right)\right),
    \end{equation}
where 
the constant $L^b_y$
represents the coupling in the state dynamics (see Remark \ref{rmk:mean_field}),
the constants 
$C_{V,1}^{i,j}$, $C_{V,2}^{i,j}$
and $C_{V,3}^{i,j}$,
defined in \eqref{eq:C_V_1_open}, \eqref{eq:C_V_2_open} and \eqref{eq:C_V_3_open}, respectively, 
and 
the constant  $C\ge 0$  
 depends  only on   the upper bounds of      $T$, 
 $\max_{i\in { [N]}} |x_i|$,
 $\max_{i\in { [N]}} \|\sigma_i\|_{L^2}$, 
 $L^{b }$ and  $L^{b }_y$.

Consequently, if
$ \sup_{i\in { [N]}, u_i\in \cA_i}\|u_i\|_{\cH^4(\sR)}<\infty$ and $0\in \cA_i$,
then $\cG_{\rm diff}$
is an $\alpha$-potential game 
with 
  an $\alpha$-potential function $\Phi$
  given by \eqref{eq:potential_differential} with $\bz =0$,
  and a constant $\alpha$ satisfying 
\begin{equation}
\label{eq:alpha_bound}
\alpha \le  C 
 \underset{i \in { [N]}}{\max} 
  \underset{j\in { [N]}\setminus \{i\}}{\sum}  \Big(C_{V,1}^{i,j} + L_y^b 
        \big(
        \frac{ 1}{N}C_{V,2}^{i,j} + \frac{ 1}{N^2} C_{V,3}^{i,j}\big)\Big)
\end{equation}
for a constant  $C\ge 0$     independent of the cost functions. 

\end{theorem}

\begin{remark}
\label{rmk:alpha_bound}
Since  the minimizer of the function $\Phi$ given in Theorem \ref{thm: verification} is an $\epsilon$-Nash equilibrium of the game 
\eqref{eq:X_i_u_i_open}-\eqref{eq:value_i_open},
with $\epsilon\le \alpha$ in  \eqref{eq:alpha_bound}. 
One can construct approximate Nash equilibria for 
$N$-player games without  the symmetry and homogeneity conditions among players imposed for mean field approximations  \cite{ 
carmona2018probabilistic}.
Moreover,  
the   upper bound   \eqref{eq:alpha_bound} is expressed in terms of the number of players, the strength of interactions, and the degree of heterogeneity among the players, proving rich insights for assessing the approximate Nash equilibria in relation to the game structure, compared to the classical mean field approximation, which typically bounds the approximation error solely based on the number of players $N$.

{  

The   condition 
$ \sup_{i\in { [N]}, u_i\in \cA_i}\|u_i\|_{\cH^4(\sR)}<\infty$ 
for the estimate  
\eqref{eq:alpha_bound}
can be relaxed to 
$ \sup_{i\in { [N]}, u_i\in \cA_i}\|u_i\|_{\cH^2(\sR)}<\infty$ 
if the state dynamics 
\eqref{eq:X_i_u_i_open} is decoupled, i.e., if the drift $b_i$ is independent of  $(X_j)_{j\not =i}$;
see Example \ref{ex:dist_game_open_distributed}
and Section \ref{sec:example}.
Indeed, 
the appearance of  $\|u'_i\|_{\cH^4}$ and    $\|u''_j\|_{\cH^4}$ 
 in the estimate   \eqref{eq:upper_bound_difference} 
is due to the $L^2$-estimate of the process $\bZ^{\bu, u'_i,u''_j}$;
see Proposition \ref{prop:Z_h_ell_i_moment_open} and \eqref{eq:cost_f_x_Z_open_loop}. If the state dynamics is decoupled, then  $\bZ^{\bu, u'_i,u''_j} =0$ 
for $i\not = j$, 
and the additional condition on the $\|\cdot\|_{\cH^4(\sR)}$ is unnecessary. 
The  uniform integrability condition 
$ \sup_{i\in { [N]}, u_i\in \cA_i}\|u_i\|_{\cH^2(\sR)}<\infty$ 
comes from estimating the $\|\cdot\|_{\cH^2(\sR)}$-norm 
of $\bY^{\bu, u'_i}$
uniformly over $u'_i\in \cA_i$ and $i\in [N]$.
 This highlights the dependence of $\alpha$ on the choice of admissible control classes $(\cA_i)_{i\in [N]}$. This dependence may be useful for analyzing the sensitivity of $\alpha$-NE with respect to design of game strategies.  

A similar estimate of $\alpha$ can be established if the drift coefficient in \eqref{eq:X_i_u_i_open} depends nonlinearly on $u_i$. In such cases, the sensitivity equations \eqref{eq:Y_h_i_open} and \eqref{eq:Z_h_ell_i_open} will incorporate the derivatives of the drift coefficient with respect to $u_i$, and the constant $C$ in Theorem \ref{thm:value_jacobian_open} will depend on the upper bounds of these derivatives.
}
    
\end{remark}
 
The proof of Theorem \ref{thm:value_jacobian_open} 
 is given in Section \ref{sec:open_loop_jacobian_proof}. 
The essential step is to establish precise   estimates of the sensitivity processes $\bY^{\bu,u'_h}$ and $\bZ^{\bu,u'_h, u''_\ell}$
in terms of the number of players $N$ and the indices $h,\ell$. 
These estimates quantify the   dependence of each player's state process on the changes in other players' controls, with a constant depending explicitly on the coupling strength $L^b_y$ in the drift coefficients (see Remark \ref{rmk:mean_field}) and the number of players. 

Theorem \ref{thm:value_jacobian_open} simplifies the task of quantifying the constant $\alpha$ in \eqref{eq:alpha_general} to bounding the difference of derivatives of the cost functions. 
For instance, 
 the following example presents a special case where the state dynamics \eqref{eq:X_i_u_i_open}  are decoupled, and 
$\alpha=0$. As a result,
the minimizer of the potential function $\Phi$ given in Theorem \ref{thm: verification} is a Nash equilibrium of the  $N$-player game \eqref{eq:X_i_u_i_open}-\eqref{eq:value_i_open}.

\begin{example}[games with $\alpha=0$]
\label{ex:dist_game_open_distributed}
 Consider the  game $\cG_{\rm diff} $  defined as in \eqref{eq:X_i_u_i_open}-\eqref{eq:value_i_open}.
 Suppose Assumption \ref{assum:regularity} holds, and 
for all $i\in { [N]}$,
$(t,x,y)\mapsto b_i(t,x,y)$ is independent of $y$, and 
    $f_i$ and $g_i$ are of the form
    \begin{align}
    \label{eq:distributed_open_potential}
    f_i(t, x,  u) &= c_i(t, x_i,u_i)  +\tilde{f}_i\left(t,  x_i, u_i, \bar \mu_{(x,u)_{-i}}\right),
    \quad
    g_i (x) = \bar{g_i}(x_i)+\tilde g_i\left(x_i,   \bar \mu_{x_{-i}} \right).
\end{align}
where $\bar{\mu}_{(x,u)_{-i}} =\frac{1}{N-1}\sum_{j\in I_{N}\setminus\{i\}}\delta_{(x_j,u_j)} $,  
$\bar{\mu}_{x_{-i}} =\frac{1}{N-1}\sum_{j\in I_{N}\setminus\{i\}}\delta_{x_j } $,
and $c_i:[0,T]\times \sR\times \sR\to \sR$, $\tilde{f}_i:[0,T]\times   \sR\times \sR\times \cP_2(\sR\times \sR)\to \sR$,
$\bar g_i: \sR\to \sR$,
and $\tilde{g}_i:\sR\times \cP_2(\sR)\to \sR$ are twice continuously differentiable.
Assume further that 
there exist twice continuously differentiable   functions $F: [0,T]\times  \cP_2(\R\times \sR) \to  \R$ and $G: \cP_2(\R)\to  \R$
such that   for all $i\in { [N]}$, $(t, x,u)\in [0,T]\times \sR^N\times \sR^N$ and $(x'_i,u'_i)\in \sR\times \sR$,
\begin{equation}\label{eqn:f_g_in_carmona}
   \begin{aligned}
 \tilde{f}_i(t, x_i, u_i, \bar \mu_{(x,u)_{-i}} )-\tilde{f}_i(t, x^{\prime}_i, u'_i, \bar \mu_{(x,u)_{-i}}  ) 
 &=F\Big(t, \frac{1}{N} \delta_{(x_i,u_i)}+\frac{N-1}{N}\bar \mu_{(x,u)_{-i}} \Big)-F\Big( t, \frac{1}{N} \delta_{(x^{\prime}_i,u'_i)}+\frac{N-1}{N}\bar \mu_{(x,u)_{-i}} \Big),
\\
\tilde{g}_i\left(x_i,\bar \mu_{x_{-i}} \right)-
\tilde{g}_i\left(x^{\prime}_i, \bar \mu_{x_{-i}}  \right) 
&=G\Big(\frac{1}{N} \delta_{x_i}+\frac{N-1}{N}\bar \mu_{x_{-i}}   \Big)
-G\Big(\frac{1}{N} \delta_{x^{\prime}_i}+\frac{N-1}{N}\bar \mu_{x_{-i}} \Big).
\end{aligned} \end{equation}
Then $\alpha =0$ and  $\cG_{\rm diff}$  is a potential game.

\end{example}

Example \ref{ex:dist_game_open_distributed} extends \cite[Proposition 2.24]{carmona2018probabilistic} from cost functions dependent solely on state variables to those dependent on both state and control variables.
It 
 follows from the fact that  
 by    
 \eqref{eqn:f_g_in_carmona}, 
      $h_i^f \coloneqq  \tilde{f}_i -F$
    and 
 $h_i^g \coloneqq  \tilde{g}_i -G$   are   independent of $(x_i,u_i)$.
  This  implies $C^{i,j}_{V,1}=0$ as 
 defined in \eqref{eq:C_V_1_open}.
  As the states are decoupled,   $L^b_y=0$ and hence $\alpha=0$ by Theorem \ref{thm:value_jacobian_open}.
  

When all players' state dynamics \eqref{eq:X_i_u_i_open} are coupled, 
a stronger  condition on the cost functions is needed   to ensure 
the constant $\alpha$ in \eqref{eq:alpha_bound} decays to zero as the number of players $N\to \infty$. 
The following example shows that if the cost functions in \eqref{eq:value_i_open} depend  on
the joint states and controls   
only through their 
 empirical measures, then  the $N$-player game \eqref{eq:X_i_u_i_open}-\eqref{eq:value_i_open} is an $\alpha$-potential game with $\alpha =\cO(1/N)$ as $N\to \infty$.

\begin{example}[games with mean field interactions]
\label{ex:mean_field_open}

Consider the  game $\cG_{\rm diff}  $  defined by \eqref{eq:X_i_u_i_open}-\eqref{eq:value_i_open}.
Suppose Assumption \ref{assum:regularity} holds 
and 
there exists $L  \ge 0$ such that  
$\sup_{i\in { [N]}, u\in \cA_i}\|u\|_{\cH^4(\sR)}\le L$,
$\max_{i\in { [N]}}|x_i|\le L$,
and 
$\max_{i\in { [N]}} \|\sigma_i\|_{L^\infty} \le L$.
Assume further that 
there exists $f_0:[0,T]\times \sR^N\times \sR^N\to \sR$ and $g_0:\sR^N\to \sR$
such that
for all $i\in { [N]}$,
$b_i$,
$f_i$ and $g_i$ are of the following form:
\begin{align}
b_i(t,x_i,x) &= \bar{b}_i\left(t,x_i,\frac{1}{N}\sum_{\ell=1}^N \delta_{x_\ell}\right),
\label{eq:mean_field_open_b}
\\
f_i(t, x,u)&= f_0(t,x,u)+c_i( u_i)+\bar{f}_i\left(t,\frac{1}{N}\sum_{\ell=1}^N \delta_{(x_\ell,u_\ell)} \right),
\quad 
g_i(x) = g_0(x) +  \bar{g}_i\left( \frac{1}{N}\sum_{\ell=1}^N \delta_{ x_\ell } \right),
\label{eq:mean_field_open_cost}
\end{align}
where 
$\bar{b}_i: [0,T]\times \sR\times  \cP_2(\sR)\to \sR$,
$c_i:\sR\to \sR$, 
$\bar{f}_i:[0,T]\times \cP_2(\sR\times \sR)\to \sR$,   and $\bar{g}_i:\cP_2(\sR)\to \sR$
are   twice continuously differentiable   
with  bounded  second-order derivatives (uniformly in $N$). 
Then $\cG_{\rm diff}  $ is an $\alpha$-potential game with $\alpha \le  {C}/{N}$, for a  constant   $C\ge 0$  independent of $N$. 

\end{example}

Example \ref{ex:mean_field_open} follows from the fact that by \eqref{eq:mean_field_open_b} and \eqref{eq:mean_field_open_cost}, 
$|(\partial_{x_h} \Delta^f_{i,j})(t,0,0)| + |(\partial_{x_h} \Delta^g_{i,j} )(0)| \le C/N$
and 
\begin{align*}
    &    |(\partial^2_{x_h x_\ell} \Delta^f_{i,j})(t,x,u)|
      +|(\partial^2_{x_h u_\ell} \Delta^f_{i,j})(t,x,u)|
      +|(\partial^2_{x_h x_\ell} \Delta^g_{i,j})(x)|
      \le C \left( \frac{1}{N}
 \mathds{1}_{h =\ell}
      +\frac{1}{N^{2} }  
 \mathds{1}_{h \not =\ell}\right)
\end{align*}
for some constant $C\ge 0$ independent of $N$ (see Remark \ref{rmk:mean_field}),
which yields the 
   bound of $\alpha$ due to   Theorem \ref{thm:value_jacobian_open}.

\begin{remark}
\label{rmk:mean_field_interaction}
Example \ref{ex:mean_field_open}    allows all players to have 
different admissible control sets  $\cA_i$,    and heterogeneous dependencies on the empirical measures of the joint state and control profiles. 
This is in contrast to the classical $N$-player mean field games with symmetric and homogeneous players (see  \cite{carmona2018probabilistic}).

Note  that even if all players have homogeneous coefficients, the conditions  in Example \ref{ex:mean_field_open}  differ from those   for potential mean field  games (MFGs) introduced in \cite{lasry2006jeux, lasry2007mean, cardaliaguet2017learning}.
An MFG is considered potential if there exists an optimal control problem whose optimal trajectories coincide with the equilibria of the MFG. This is a weaker condition than the notion of   
$N$-player potential game described in \cite{monderer1996potential}, as it is a local property that concerns only the minimizer of the potential function.

In contrast,  Example \ref{ex:mean_field_open} allows  the $\alpha$-potential function to     control  the derivatives of each player's { objective function} globally, with an error of order 
$\cO(1/N)$ as $N\to\infty$. 
This property is crucial for ensuring the convergence of gradient-based learning algorithms (see \cite{guo2023markov} and references therein).
Consequently, when 
$b_i$ depends on the empirical measure of the states, we require the cost functions 
$\bar f_i$ and $\bar g_i$ in \eqref{eq:mean_field_open_cost} to depend on the state and controls only through their empirical measures.
 Assuming the uniqueness of Nash equilibria in MFGs, the minimum of the 
$\alpha$-potential function (with appropriate scaling) converges to the minimum of the mean field potential function as $N\to \infty$, provided that sufficient conditions are met to allow the interchangeability of minimization and the limit.

\end{remark}
  \section{Conditional McKean-Vlasov control problem for $\alpha$-NE}  
\label{sec:MKV_DPP}
 Given an $\alpha$-potential function  $\cG_{\rm diff}$, 
this section establishes a dynamic programming  approach to minimize 
  the $\alpha$-potential function $\Phi$  in \eqref{eq:potential_differential} 
  over $\mathcal A^{(N)}$, { where the state process is governed by \eqref{eq:X_i_general}}. 
  The main difficulty is that 
  the   objective   \eqref{eq:potential_differential} depends   on the  aggregated behaviour of the state processes
with respect to $r\in [0,1]$,
which   acts as    an additional noise independent of the Brownian motion  $W$. 
Meanwhile, the admissible controls in  $\cA^{(N)}$  are adapted to a smaller filtration $\sF$ that
depends only on $W$ but 
 is independent of $r$.
To apply the  dynamic programming approach,
we embed the optimization problem into a  suitable conditional McKean--Vlasov (MKV) control problem. 

\subsection{Conditional MKV control problem}\label{ssec: ConditionalMKV} 
We start with some necessary   notation:
For each 
$t, s \in [0,T]$, 
let  $W^t_s\coloneqq W_{s\vee t} -W_t$ be the Brownian increment
  after time $t$,
  and let 
 the filtration $\sF^{t}$ 
 be the $\sP$-complement of 
the filtration generated by  
$W^t=(W^t_s)_{s\ge 0}$. 
Note that $\sF^0$ coincides with $\sF$. 
For each Euclidean space $E$, 
we denote by $\cP_2(E)$ 
the set  of   probability measures   $\mu$ on $E$ with finite second moment,
i.e., $\|\mu\|_2^2\coloneqq \int_E |x|^2\mu(\d x)<\infty$. 
The space $\cP_2(E)$ is equipped with the $2$-Wasserstein distance.
We assume without loss of generality (see e.g., \cite{cosso2023optimal}) that   there exists a sub-$\sigma$-field $\cG\subset \cF$, 
 which is independent of $W$  and is  ``rich enough" in the sense that 
 $\cP_2(\cS)=\{\cL(\xi)\mid \xi\in L^2( \cG; \cS ) \} $,
 where 
 $\cL(\xi)$ denotes  the distribution of $\xi$ under $\sP$,
   $\cS \coloneqq \sR^{(N+1)Nd}\times [0,1]$, 
 and  $L^2(\cG; \cS)$ is the set of  $\cS$-valued  $\cG$-measurable  square integrable random
variables on  $(\Omega, \cF,\sP)$. 
We define $\sG\coloneqq  (\cG_t )_{t\in [0,T]}$ to be 
   the    filtration generated by   $W$,
  augmented with $\cG$ and $\sP$-null sets.

Now we introduce the MKV control problem associated with \eqref{eq:potential_differential} and \eqref{eq:X_i_general}.
The state process of the MKV control problem takes values in $\cS \coloneqq \sR^{(N+1)Nd}\times [0,1]$,
encompassing the original state process $\bX^{\bu}$, 
  the sensitivity processes $(\bY^{\bu, u_i})_{i\in { [N]}}$,
  and the additional 
 parameter $r$.
More precisely, 
let $A=\prod_{i=1}^N A_i$, and 
fix
$z =(z_i)_{i\in { [N]}}\in A$. 
For each $t\in [0,T]$,  let $\cA^t$ be  the set of 
$\sF^t$-progressively measurable square integrable processes taking values in $A$.
Let $\cP_2^{\textnormal{Unif}}(\cS)$ be the space     of  measures     $\nu\in \cP_2(\cS)$ 
 whose marginal $\nu\vert_{[0,1]}$ on $[0,1]$ is the uniform distribution: 
 \begin{equation*}
     \cP_2^{\textnormal{Unif}}(\cS)\coloneqq 
     \{\nu\in \cP_2(\cS)\mid \nu\vert_{[0,1]}=\operatorname{Unif}(0,1)\}.
 \end{equation*}
For each $\nu\in \cP_2^{\textnormal{Unif}}(\cS)$, $\bu \in \cA^t$, 
and  $(\xi,\mathfrak r)\in L^2(\cG; \cS)  $   with  
 $\cL(\xi, \mathfrak r)=\nu$, 
  consider 
the   process 
$\sX^{t,\xi,\mathfrak r,\bu}$ governed by  the following dynamics,
which concentrates   the state process  \eqref{eq:X_i_general} 
and the sensitivity processes 
\eqref{eq:Y_h_i_general}: \begin{align}
\label{eq:state_lifted}
\sX^{t,\xi,\mathfrak r,\bu}_s=\xi+\int_t^s
B(v,\sX^{t,\xi,\mathfrak r,\bu}_{v},  \mathfrak r, \bu_{v})\d v + 
\int_t^s \Sigma(v,\sX^{t,\xi,\mathfrak r,\bu}_{v},  \mathfrak r, \bu_{v})\d W^t_v,
\quad s\in [t,T],
\end{align}
where  $B= \vcat(B_1,\ldots , B_{N+1}) : [0,T]\times \cS \times A\to \sR^{(N+1)Nd}$
is defined by: 
for all $t\in [0,T]$,  $\sx=\vcat(x, y_1,\ldots, y_N)\in \sR^{(N+1)Nd}$, $r\in [0,1]$, and $u=(u_i)_{i\in { [N]}}\in A$,  
$B_1(t,\sx, r, u)\coloneqq 
\vcat(
b_1(t,x , u), \ldots,  
b_N(t,x , u)
)
$  and 
\begin{align*}
B_{i+1}(t,\sx, r, u)\coloneqq 
\begin{pmatrix}
(\partial_x{b}_1) \left(t,  x, z+r(u-z) \right) y_i
     + (\partial_{u_i}{b}_1)\left(t, x, z+r(u-z) \right) (u_i-z_i)
\\
\vdots
\\
(\partial_x{b}_N) \left(t,  x, z+r(u-z) \right) y_i
     + (\partial_{u_i}{b}_N)\left(t, x, z+r(u-z) \right) (u_i-z_i)
\end{pmatrix},
\quad  \forall i\in { [N]},
\end{align*}
and  
 $\Sigma =(\Sigma_1,\ldots, \Sigma_m):[0,T]\times \cS \times A\to  \sR^{(N+1)Nd\times m}$ 
is defined such that for all $k=1,\ldots, m$, $\Sigma_k$ is defined in the same way as   $B$, but with $b_i$ replaced by $\sigma_{ik}$ for all $i\in { [N]}$. 
 Under Assumption \ref{assum:regularity_general}, 
 $(\sX^{t,\xi,\mathfrak r,\bu}, \mathfrak r)$ is a uniquely defined $\cS$-valued $\sG$-adapted 
 square integrable process.
Moreover, 
as $\mathfrak r $ is independent of $\cF^{t}_s$ and is stationary in time, 
the conditional law 
$\mu^{t,\xi,\mathfrak r,\bu}_s  \coloneqq \cL(\sX^{t,\xi,\mathfrak r,\bu}_s,\mathfrak r | \cF^t_s)$, $s\in [t,T]$,
 is a  $\cP_2^{\textnormal{Unif}}(\cS)$-valued $\sG$-optional process  (see \cite[Lemma A.1]{djete2022mckean}).

Consider     the  following   cost functional,
which is a dynamic version of the $\alpha$-potential function \eqref{eq:alpha_general}:
 \begin{align}
 \label{eq:cost_lifted}
 J(t,\xi,\mathfrak r,\bu)
   &\coloneqq \mathbb{E} \bigg[\int_t^T
  \left\langle  
    F(s, \cdot,\cdot, \bu_s ),  \mu^{t,\xi,\mathfrak r,\bu}_s \right\rangle
\mathrm{d} s 
 +
   \left\langle  
 G ,  \mu^{t,\xi,\mathfrak r,\bu}_T \right\rangle\bigg],
 \end{align}
 where 
$\mu^{t,\xi,\mathfrak r,\bu}_s  = \cL(\sX^{t,\xi,\mathfrak r,\bu}_s,\mathfrak r | \cF^t_s)$,
  $F: [0,T]\times \cS\times A\to \sR$ 
 and $G:  \cS \to \sR$ 
 are defined by: 
for all $t\in [0,T]$, $\sx=\vcat(x,y_1,\ldots, y_N)  \in \sR^{(N+1)Nd}$,
$r\in [0,1]$
and $u= (u_i)_{i\in { [N]}} \in A$,
\begin{align}
\label{eq:F_G_lift}
F(t, \mathbbm{x}, r, u ) \coloneqq  \sum_{j=1}^N 
    \left(\begin{array}{c}
y_j \\
 u_j-z_j
\end{array}\right)^{\top}\left(\begin{array}{c}
\partial_x f_j \\
\partial_{u_j} f_j
\end{array}\right)\left(t, x, z+r(u-z)\right),
\quad 
G( \mathbbm{x},r) \coloneqq\sum_{j=1}^N    y_j^\top \left(\partial_x g_j\right)(x)\,,
\end{align}
where   $\langle h , \mu\rangle$  denotes   the integral of the function $h$ with respect to the measure $\mu$.

The following proposition identifies 
 minimizing the $\alpha$-potential function $\Phi$ in 
\eqref{eq:potential_differential}
as solving  an MKV control problem 
with a specific initial condition.
The result relies on the crucial observation that 
  the  cost functional $J$ in \eqref{eq:cost_lifted} satisfies   the
law invariance property \cite{cosso2019zero,djete2022mckean}, i.e., it 
  depends on the law of $(\xi,\mathfrak r)$ instead of  the specific choice of   the random variable $(\xi,\mathfrak r)$ itself.

 \begin{prop}
 \label{prop:law_invariance}
Suppose Assumption \ref{assum:regularity_general} holds.
Let  $J$ be  defined in  \eqref{eq:cost_lifted}. 
For all  $(t,\nu)\in [0,T]\times \cP^{\textnormal{Unif}}_2(\cS) $,    $\bu\in \cA^t$, 
 and   
 $(\xi,\mathfrak r), (\xi',\mathfrak r')\in L^2(\cG;\cS)$ with law $\nu$,  
 $J(t,\xi,\mathfrak r,\bu) =  J(t,\xi',\mathfrak r',\bu)$.
 
Consequently, 
the { optimal value function} for minimizing  
\eqref{eq:cost_lifted} 
can be identified as
   $V: [0,T]\times \cP^{\textnormal{Unif}}_2(\cS)\to \sR\cup\{-\infty, \infty\}$: 
   \begin{equation}
   \label{eq:MV_t_nu}
   V(t,\nu) \coloneqq \inf_{\bu \in \cA^t }  J(t,\xi,\mathfrak r,\pmb u),
   \end{equation}
   for any 
   $(\xi,\mathfrak r)\in L^2(\cG; \cS)  $   with  
 $\cL(\xi, \mathfrak r)=\nu$.
 Moreover, let 
 $\Phi$ be defined in 
\eqref{eq:potential_differential}, 
 we have 
 \begin{equation}
\label{eq:MV_potential}
V\left(0,\delta_{\vcat(x_1,\ldots, x_N, {0}_{N^2d})}\otimes \operatorname{Unif}(0,1)\right) = \inf_{\bu \in \cA^{(N)} }\Phi(\bu),
\end{equation}
where  $x_i$ is the initial state of \eqref{eq:X_i_general} and 
${0}_{N^2d}\in \sR^{N^2d}$ is the zero vector. 

 \end{prop}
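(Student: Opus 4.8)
The plan is to prove the two assertions separately: first the law-invariance of the functional $J$ in \eqref{eq:cost_lifted}, which is what makes the value function $V$ in \eqref{eq:MV_t_nu} well-defined, and then the identity \eqref{eq:MV_potential} relating the lifted value at the degenerate initial law to $\inf_{\bu}\Phi(\bu)$.

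For the law-invariance, fix $t$, $\nu$, a control $\bu\in\cA^t$, and two representatives $(\xi,\mathfrak r),(\xi',\mathfrak r')\in L^2(\cG;\cS)$ of $\nu$. The structural fact I would exploit is that $(\xi,\mathfrak r)$ is $\cG$-measurable, hence independent of $W^t$, whereas $\bu$ and the coefficients $B,\Sigma$ in \eqref{eq:state_lifted} are $\sF^t$-adapted functionals of $W^t$. Under Assumption \ref{assum:regularity_general} these coefficients are Lipschitz, so \eqref{eq:state_lifted} admits a unique strong solution and there is a measurable solution map $\Psi_s$ with $\sX^{t,\xi,\mathfrak r,\bu}_s=\Psi_s\big(\xi,\mathfrak r,(W^t_v)_{v\in[t,s]}\big)$ almost surely. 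Conditioning on $\cF^t_s$ freezes the path of $W^t$ and, by independence, leaves $(\xi,\mathfrak r)$ distributed as $\nu$; hence the conditional law $\mu^{t,\xi,\mathfrak r,\bu}_s=\cL(\sX^{t,\xi,\mathfrak r,\bu}_s,\mathfrak r\mid\cF^t_s)$ equals the pushforward of $\nu$ under the pathwise-fixed map $(x,r)\mapsto(\Psi_s(x,r,W^t),r)$, which depends on the initial data only through $\nu$. Therefore $\mu^{t,\xi,\mathfrak r,\bu}_s=\mu^{t,\xi',\mathfrak r',\bu}_s$ almost surely for each $s$, and since $J$ is a functional of these conditional laws alone, $J(t,\xi,\mathfrak r,\bu)=J(t,\xi',\mathfrak r',\bu)$. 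This is precisely the law-invariance used in \cite{cosso2019zero,djete2022mckean} and it legitimizes \eqref{eq:MV_t_nu}.

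For the identification I specialize to $t=0$, where $\sF^0=\sF$ and $\cA^0=\cA^{(N)}$, and take the deterministic $\xi=\vcat(x_1,\dots,x_N,0_{N^2 d})$ together with $\mathfrak r\sim\operatorname{Unif}(0,1)$ independent of $W$, so that $\cL(\xi,\mathfrak r)=\nu_0$. It suffices to show $J(0,\xi,\mathfrak r,\bu)=\Phi(\bu)$ for every $\bu\in\cA^{(N)}$ and then take infima. First the tower property removes the conditioning, since $\mathbb{E}[\langle F(s,\cdot,\cdot,\bu_s),\mu^{0,\xi,\mathfrak r,\bu}_s\rangle]=\mathbb{E}[F(s,\sX^{0,\xi,\mathfrak r,\bu}_s,\mathfrak r,\bu_s)]$, and likewise for $G$, giving $J(0,\xi,\mathfrak r,\bu)=\mathbb{E}\big[\int_0^T F(s,\sX_s,\mathfrak r,\bu_s)\,\d s+G(\sX_T,\mathfrak r)\big]$. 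Next, because $\bu$ is $\sF$-adapted and thus independent of $\mathfrak r$, Fubini lets me integrate $\mathfrak r\sim\operatorname{Unif}(0,1)$ out as $\int_0^1(\cdot)\,\d r$: freezing $\mathfrak r=r$ decouples \eqref{eq:state_lifted}, its state block reproducing \eqref{eq:X_i_general} along the shifted control $\bu^r=z+r(\bu-z)$ and its $y_i$-block reproducing the sensitivity equation \eqref{eq:Y_h_i_general} at control $\bu^r$ in direction $u_i-z_i$, with the zero initial sensitivities encoded in $\xi$; that is, $\sX^{(r)}=\vcat(\bX^{\bu^r},\bY^{\bu^r,u_1-z_1},\dots,\bY^{\bu^r,u_N-z_N})$. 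Substituting this together with the definitions \eqref{eq:F_G_lift} of $F$ and $G$ reproduces, term by term, the integrand of \eqref{eq:potential_differential}, and interchanging $\int_0^1\d r$ with $\mathbb{E}\int_0^T\d s$ yields $J(0,\xi,\mathfrak r,\bu)=\Phi(\bu)$. Taking the infimum over $\bu\in\cA^{(N)}$ then gives \eqref{eq:MV_potential}.

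The main obstacle is the law-invariance step: one must argue rigorously that $\mu^{t,\xi,\mathfrak r,\bu}_s$ depends on the initial data only through $\nu$, which rests on combining strong uniqueness for \eqref{eq:state_lifted} with the independence of the rich sub-$\sigma$-field $\cG$ from the driving noise $W^t$, and on making the pathwise solution-map representation and the associated freezing argument precise rather than heuristic. By contrast, the identification is essentially a Fubini computation whose only analytic input is the $\cS^2$-moment bound on $\sX^{t,\xi,\mathfrak r,\bu}$ furnished by Assumption \ref{assum:regularity_general}, which guarantees the integrability of $F$ and $G$ needed to justify the interchange of integrals.
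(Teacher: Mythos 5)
Your proof is correct and follows essentially the same route as the paper: the law invariance rests on strong uniqueness of \eqref{eq:state_lifted} plus independence of the $\cG$-measurable initial data $(\xi,\mathfrak r)$ from $W^t$ — the freezing/solution-map argument you spell out is precisely what the paper delegates to \cite[Proposition 2.4]{djete2022mckean} — and the identification \eqref{eq:MV_potential} is obtained, as in the paper, from $\cA^0=\cA^{(N)}$, the tower property, and integrating out the independent uniform $\mathfrak r$, i.e.\ the paper's ``law of iterated conditional expectations'' step. One remark: your reading of the state block of \eqref{eq:state_lifted} as driven by the shifted control $\bu^r=z+r(\bu-z)$ is the intended one — it is what makes the block match $\bX^{\bu^r}$, consistently with the $y_i$-blocks, with \eqref{eq:F_G_lift}, and with the LQ specialization \eqref{eq:LQ_lifted} where the first block's drift is $A\sX+\mathfrak{r}\bu$ — even though $B_1$ as displayed in \eqref{eq:state_lifted} is literally evaluated at $u$, which appears to be a typo in the paper rather than a gap in your argument.
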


The law invariance of $J$ follows from the fact that   
each   $\bu\in \cA^t$ 
is adapted to the filtration of $W$, and    is  independent of   $\cG$.
Then by the strong uniqueness of  \eqref{eq:state_lifted},
it holds that for all 
$(\xi,\mathfrak r), (\xi',\mathfrak r')\in L^2(\cG;\cS)$ with law $\nu$, 
$\cL(\sX^{t,\xi,\mathfrak r,\bu}, \bu)=\cL(\sX^{t,\xi',\mathfrak r',\bu}, \bu)$,
and hence  $J(t,\xi,\mathfrak r,\bu) =  J(t,\xi',\mathfrak r',\bu)$
(see \cite[Proposition 2.4]{djete2022mckean}). 
The identity \eqref{eq:MV_potential}
follows from 
$\cA^0=\cA^{(N)}$ and  by the     law of iterated conditional expectations,  
$\Phi(\bu) =J(t,\xi,\mathfrak r,\pmb u)$  with      $ \xi=\vcat(x_1,\ldots, x_N, {0}_{N^2d})$ and  
      a uniform random variable $\mathfrak r\in L^2(\cG; [0,1]) $.

We remark that \eqref{eq:MV_t_nu} is a specific   stochastic control problem with 
    conditional    MKV  dynamics, 
   where
    the  state  
   \eqref{eq:state_lifted} does not involve law dependence, and 
   the cost functions \eqref{eq:cost_lifted} depend linearly on the conditional distribution.
As a result, the dynamic programming approach, developed for general MKV control problems in  \cite{pham2017dynamic,   djete2022mckean}, 
can be applied to minimize the $\alpha$-potential function $\Phi$.  

\subsection{HJB equation for the $\alpha$-potential function}\label{ssec: HJB} 
In the section, we identify the { optimal value function} \eqref{eq:MV_t_nu} as a solution of  an 
HJB equation. 
We will adopt the notion of linear 
 derivative   with respect to probability measures 
 as in \cite{guo2023ito, de2024mean, guo2024ito}, as it allows for a clear distinction between the derivatives with respect to the marginal laws of $\sX^{t,\xi,\mathfrak r,\bu}$ and  $\mathfrak r $; see Remark \ref{rmk:linear_derivative}. 
 
Specially, we say 
a function  $\phi:[0,T]\times \cP_2(E)\to \sR$ 
is in 
$C^{1,2}( [0,T]\times \cP_2(E))$
if there exist continuous functions 
$ \frac{\delta \phi}{\delta \mu}: [0,T]\times \cP_2(E)\times E \to  \sR$
and  
 $  \frac{\delta^2 \phi}{\delta \mu^2}: [0,T]\times \cP_2(E)\times E\times E \to  \sR$ 
 such that $\frac{\delta^2 \phi}{\delta \mu^2}$ is symmetric in its last two  arguments and the following properties hold:
\begin{itemize}
    \item continuously differentiable:
$\partial_t \phi(t,\mu)$, 
$\partial_v \frac{\delta \phi}{\delta \mu}(t,\mu, v)$,
$\partial^2_{vv} \frac{\delta \phi}{\delta \mu}(t,\mu, v)$
and 
$ \partial^2_{vv'} \frac{\delta^2 \phi}{\delta^2 \mu}(t,\mu, v,v')$ exist and are continuous in $(t,\mu, v,v')$.
\item 
 locally uniform bound: for any compact $K\subset \cP_2(E)$, there exists    $c_K\ge 0$ such that for all $(t,\mu)\in [0,T]\times K$ and $v,v'\in E$, 
 $
 |\partial_v \frac{\delta \phi}{\delta \mu}(t,\mu, v)| \le c_K(1+|v|)$,
 $
 |\partial^2_{vv} \frac{\delta \phi}{\delta \mu}(t,\mu, v)|+ |\partial^2_{vv'} \frac{\delta^2 \phi}{\delta^2 \mu}(t,\mu, v,v')|\le c_K$.
 \item 
fundamental theorem of calculus: for all $\mu,\nu\in \cP_2(E)$ and $t\in [0,T]$, 
\begin{align*}
    \phi(t,\mu)-\phi(t,\nu)&=\int_0^1 \int_E \frac{\delta \phi}{\delta \mu}(t, \lambda \mu+(1-\lambda)\nu, v)(\mu-\nu)(\d v)\d \lambda, 
    \\
     \phi(t,\mu)-\phi(t,\nu)
     &- \int_E \frac{\delta \phi}{\delta \mu}(t,   \nu, v)(\mu-\nu)(\d v)
     \\
     &=\int_0^1\int_0^r \int_{E\times E} \frac{\delta^2 \phi}{\delta^2 \mu}(t, s \mu+(1-s)\nu, v,v')(\mu-\nu) (\d v )(\mu-\nu) ( \d v') \d s\d r. 
\end{align*}
 \end{itemize}
For each $u\in A$,
 $\phi\in C^{1,2}( [0,T]\times \cP_2(\cS))$, $t\in [0,T]$, and $\mu\in  \cP_2(\cS)$, define the function $  \mathbb L^u \phi (t, \mu):\cS\to \sR$ by 
 \begin{align}
 \label{eq:L_u}
 \mathbb L^u \phi (t,\mu) (\sx ,r)
 \coloneqq 
B(t,\sx,  r, u)^\top  \partial_{\sx}\frac{\delta \phi}{\delta \mu} (t, \mu,  \mathbbm{x},  r )
+\frac{1}{2}\operatorname{tr}\left(   
   (\Sigma \Sigma^\top)(t, \mathbbm{x} ,r , u)
\partial^2_{\sx\sx}\frac{\delta \phi}{\delta \mu}  (t, \mu, \sx ,r  )
\right)
 \end{align}
with $B$ and $\Sigma$   in \eqref{eq:state_lifted}, 
 and define 
 the function $  \mathbb M^u \phi (t,\mu):\cS\times \cS\to \sR$ by 
 \begin{align}
 \label{eq:M_u}
 \mathbb M^u \phi (t,\mu) (\sx ,r, \sx', r')
 \coloneqq 
 \frac{1}{2}\operatorname{tr}\left(  
 \Sigma (t, \mathbbm{x} ,r , u) 
  \Sigma(t, \mathbbm{x}' ,r' , u)^\top 
\partial^2_{\sx\sx'}\frac{\delta^2 \phi}{\delta^2 \mu} 
 (t,\mu, \sx ,r ,\sx',r' )
\right).
 \end{align} 
 Note that under Assumption \ref{assum:regularity_general},
 $ \mathbb L^u \phi (t, \mu)\in L^1(\cS,\mu)$ 
 and 
  $ \mathbb M^u \phi (t, \mu)\in L^1(\cS\times \cS, \mu\otimes \mu)$. 
Define the Hamiltonian 
\begin{equation}\label{eq: hamiltonian}
     \hat H (t,\mu, \phi, u)\coloneqq  
  \langle   \mathbb L^u \phi (t,\mu) ,\mu\rangle
  +
    \langle   \  \mathbb M^u \phi (t,\mu)
 ,\mu\otimes \mu \rangle
  + \langle F(t,\cdot, \cdot, u)  ,\mu\rangle,
\end{equation}
 with   $F$   defined in \eqref{eq:F_G_lift}.

\begin{remark}
\label{rmk:linear_derivative}
    As $\mathfrak r$ is stationary in \eqref{eq:state_lifted}, the operators $\mathbb L^u $
    and $\mathbb M^u $
    only involve the partial derivative with respect to the $\sx$-component, and not  the  derivative with respect to the $r$-component. 
    One can equivalently express these operators using the Lions   derivatives as in \cite{pham2017dynamic}. Indeed, let $\partial_\mu\phi$ be the Lions derivative of $\phi$, 
    \begin{align*}
& \mathbb L^u \phi (t,\mu) (\sx ,r)
 =
 \begin{pmatrix} B(t,\sx,  r, u)\\ 0 \end{pmatrix}^\top  \partial_{\mu}\phi (t, \mu)( \mathbbm{x},  r )
+\frac{1}{2}\operatorname{tr}\left(   \begin{pmatrix}  (\Sigma \Sigma^\top) (t, \mathbbm{x} ,r , u)&  0_{  N(N+1)d}
   \\
   0_{  N(N+1)d}^\top & 0
   \end{pmatrix}
\partial_{(\sx,r)}\partial_{\mu } \phi(t, \mu)(\sx ,r  )
\right),
 \end{align*}
due to the relation 
$\partial_\mu \phi =\partial_{(\sx ,r)}\frac{\delta \phi}{\delta \mu} $
(see     \cite[Proposition 5.48]{carmona2018probabilistic}). Similar expression holds for  $\mathbb M^u \phi$. 
We adopt the expressions \eqref{eq:L_u} and 
\eqref{eq:M_u}
to simplify the notation. 
\end{remark}

We now  present  a verification theorem, which   constructs  
an optimal control of \eqref{eq:MV_t_nu} (and \eqref{eq:potential_differential})
in an 
   analytic feedback
form  using a smooth solution to an HJB equation  
in the Wasserstein space.

\begin{theorem} \label{thm: verification}
Suppose   Assumption \ref{assum:regularity_general} holds.
Let    $v\in C^{1,2}( [0,T]\times \cP_2(\cS))$ be such that for a  constant $C\ge 0$,
$$
|v(t,\mu)|\le C(1+\|\mu\|_2^2),\quad 
 \left|\partial_{(\sx, r)}\frac{\delta v}{\delta \mu}(t,\mu ,\sx ,r )\right|\le C(1+|\sx|+\|\mu\|_2),
 \quad (t,\mu) \in [0,T]\times \cP_2^{\textnormal{Unif}}(\cS), (\sx, r)\in \cS.
 $$ 
Assume that 
$\inf_{u\in A}  \hat H (t,\mu, v, u)\in \sR$ for all  $(t,\mu)$, and 
$v$ satisfies the following HJB equation:
 \begin{equation}\label{eq:HJB}
  \left\{
   \begin{aligned}
\partial_t w (t, \mu)  + \min_{u\in A}  \hat H (t,\mu, w, u)
 &=0, \quad (t,\mu)\in [0,T)\times \cP_2^{\textnormal{Unif}}(\cS),
 \\ 
w(T,\mu) &  =  \langle G,  \mu\rangle, \quad \mu \in \cP_2^{\textnormal{Unif}}(\cS).
\end{aligned}\right.
 \end{equation}
 Assume further  that there exists a measurable map 
$\hat a :[0,T]\times \cP_2^{\textnormal{Unif}}(\cS)\to A$
such that 
for all $(t,\mu)\in [0,T]\times \cP_2^{\textnormal{Unif}}(\cS)$,
\begin{equation}\label{eqn: hat_a_in_DPP}
    \hat a (t,\mu) \in \arg\min_{u\in A}  \hat H (t,\mu, v, u),
\end{equation}
  for any  
     $(\xi,\mathfrak r)\in L^2(\cG;\cS)$
with law $\mu $,  the following  
equation 
\begin{align}
\hat \sX_s=\xi+\int_t^s
B\left(v, \hat \sX_{v},  \mathfrak r, \hat a \big(v, \cL(\hat\sX_{v}, \mathfrak r \mid \cF^t_v)\big)\right)\d v + 
\int_t^s \Sigma\left(v, \hat \sX_{v},  \mathfrak r, \hat a \big(v, \cL(\hat\sX_{v}, \mathfrak r \mid \cF^t_v)\big) \right) \d W^t_v,
\quad s\in [t,T]
\end{align}
admits a square integrable solution 
$\hat{\sX}^{t,\xi,\mathfrak r}$,     
 and   the feedback control 
 $\hat\bu^{t,\xi,\mathfrak r}_s\coloneqq  \hat a (s, \cL(\hat{\sX}^{t,\xi,\mathfrak r}_{s}, \mathfrak r \mid \cF^t_s)) $,
 $s\in [t,T]$,
 is in $\cA^t$. 
 Then 
 $v = V$ on $[0,T]\times \cP_2^{\textnormal{Unif}}(\cS)$,  
 and for all $(t,\mu)\in [0,T]\times \cP_2^{\textnormal{Unif}}(\cS)$,
$\hat\bu^{t,\xi,\mathfrak r}\in \cA^t$
with     $\cL(\xi,\mathfrak r)=\mu$
is an optimal control for $V(t,\mu)$.
   
   Consequently, 
   given $ \xi=\vcat(x_1,\ldots, x_N, {0}_{N^2d})$ and  
      a uniform random variable $\mathfrak r\in L^2(\cG; [0,1]) $,
the control 
   $\hat\bu^{0,\xi,\mathfrak r}\in \cA^{(N)}$ minimizes the $\alpha$-potential function 
   $\Phi$ given in 
\eqref{eq:potential_differential},
thus is an $\alpha$-Nash equilibrium of the game $\cG_{\rm diff} $ in Section \ref{sec:general_stochastic_game}.

  \end{theorem}

  Theorem \ref{thm: verification} only requires the function $v$ to satisfy the HJB equation \eqref{eq:HJB} on the subspace   $\cP_2^{\textnormal{Unif}}(\cS)$, rather than on the entire space $\cP_2(\cS)$ as is the case for general MKV control problems \cite{pham2017dynamic, djete2022mckean}. This is due to the fact that 
the flow  
$  (\mu^{t,\xi,\mathfrak r,\bu}_s )_{s\in [t,T]}= (\cL(\sX^{t,\xi,\mathfrak r,\bu}_s,\mathfrak r | \cF^t_s))_{s\in [t,T]}$  remains in  $\cP_2^{\textnormal{Unif}}(\cS)$ for any control $\bu\in\cA^t$. 
Restricting the domain of  \eqref{eq:HJB} to 
$\cP_2^{\textnormal{Unif}}(\cS)$ 
is essential for minimizing  \eqref{eq:potential_differential}  analytically  in linear-quadratic games; see Section \ref{sec:example}.

{
 Theorem \ref{thm: verification} 
   adapts \cite[Theorem 4.2]{pham2017dynamic}
to the present setting.
Compared with \cite{pham2017dynamic},
since we do not assume the compactness of the action space, we introduce the additional assumption 
on the finiteness of  
$\inf_{u\in A}  \hat H (t,\mu, v, u)$.
With this assumption in place, the  proof
follows     
directly along the same lines as the same  lines of  the  verification theorem \cite[Theorem 4.2]{pham2017dynamic}.
}
 Indeed, fix $(t,\mu)\in [0,T]\times \cP_2^{\textnormal{Unif}}(\cS)$ and 
     $(\xi,\mathfrak r)\in L^2(\cG;\cS)$
with law $\mu$.
 For any  
  $\bu \in \cA^t$, 
     applying  It\^{o}'s formula   in \cite{guo2024ito} (see also \cite[Theorem 4.17]{carmona2018probabilisticII})
to $s\mapsto v(s, \cL(\sX^{t,\xi,\mathfrak r,\bu}_{s}, \mathfrak r \mid \cF^t_s))$ and 
using the fact that $\cL(\sX^{t,\xi,\mathfrak r,\bu}_{s}, \mathfrak r \mid \cF^t_s)$  lies in $   \cP_2^{\textnormal{Unif}}(\cS)$ and the condition \eqref{eq:HJB} of $v$ yield that 
 $v(t,\mu)\le J(t,\xi,\mathfrak \tau ,\bu) $,
 which implies that $v(t,\mu)\le V(t,\mu)$. 
{ The condition \eqref{eqn: hat_a_in_DPP} of $\hat a$ 
and the assumption 
$\hat\bu^{t,\xi,\mathfrak r}\in \mathcal A^t$
imply the optimality of $\hat\bu^{t,\xi,\mathfrak r}$.}
A special case of Theorem \ref{thm: verification} for a class of linear-quadratic games  is presented   in   Theorem  \ref{thm:lq_verification},
 with a detailed proof provided.

  }

 \section{A toy example: linear-quadratic $\alpha$-potential games}
\label{sec:example}
 
 In this section, we illustrate our results through a simple  linear-quadratic (LQ) game $\cG_{\textrm{LQ}}$ on a   undirected graph $G = (V, E)$. The vertex of the graph is the set of players $V = { [N]}$, 
 and each edge
  between the vertices  represents   a dependency between the associated players.
   The { objective function} of player $i$ in this game is given by
\begin{align}
\label{eq: cost_ex}
V_i(\bu)=\sE\left[\int_0^T \left( 
 u_{i,t}^2 + \frac{1}{N} \sum_{j=1}^N q_{ij}  (X^{\bu}_{i,t}-X^{\bu}_{j,t})^2 \right)\,\d t+ {\gamma_i} (X^{\bu}_{i,T} - d_i)^2\right],
\end{align}
where 
$q_{ij},\gamma_i\ge 0$, $d_i\in \sR$,  and 
for any 
$\bu =(u_i)_{i\in { [N]}}\in \cH^2(\sR^{N})$, the state process  $\bX_t^\bu$ is governed by:  
\begin{equation}\label{eq: dynamics_ex}
    \begin{aligned}
        \d X_{i, t} = \left( a_i(t) X_{i,t} +   u_{i, t}  \right)\d t +  \sigma_i(t) \d W^i_{ t},  \quad  X_{i,0} =x_i, \quad t \in [0,T],
        \; i\in { [N]},
    \end{aligned}
\end{equation}
where $x_i\in \sR$,    $a_i,  \sigma_i: [0,T]\to \sR$ are  (possibly distinct) continuous  functions.  
Player $i$'s aims to minimize \eqref{eq: cost_ex} over the control set 
\begin{equation}
\label{eq:control_ex}
\cA_i=\{u_i\in \cH^2(\sR)|\|u\|_{\cH^2(\sR)}\le L \},
    \end{equation}
where $L>0$ is a given 
 sufficiently large constant.

 The above  game 
 can be viewed as  
 a crowd flocking game \cite{lachapelle2011mean, aurell2018mean,  carmona2023synchronization}. The goal is for all players to reach their respective destinations by a specified terminal time. During the game, players exhibit a tendency to group together, mimicking the collective behavior observed in natural flocks or herds. This phenomenon, known as flocking, is driven by factors such as safety, efficiency, and social interaction.

\subsection{Quantifying $\alpha$ for $\cG_{\textrm{LQ}}$}
\label{sec:alpha_crowd}
Since    the  dynamics \eqref{eq: dynamics_ex}
is decoupled, 
 Theorem \ref{thm:value_jacobian_open}
 and Remark \ref{rmk:alpha_bound} imply  that    $\cG_{\textrm{LQ}}$ is an $\alpha_N$-potential game with  
\begin{equation}
\label{eq:alpha_N}
        \alpha_N \leq C \frac{1}{N}  \max_{i\in { [N]}}\sum_{j\ne i} |q_{ji}- q_{ij}|.
\end{equation}
Suppose that  the constants 
 $(q_{ij}, \gamma_i, d_i, x_i)_{i,j\in { [N]}}, L$ and the sup-norms of  $(a_i,\sigma_i)_{i\in { [N]}}$ 
  are uniformly bounded in 
 $N$.
 Then, an explicit bound for 
$\alpha_N$ in terms of the number of players 
$N$ and the strength and symmetry of players' interactions  can be obtained, as illustrated below:

\begin{itemize}
    \item \textbf{Symmetric interaction.}
If the interaction weights \((q_{ij})_{i,j \in { [N]}} \) satisfy the \emph{pairwise} symmetry condition \(q_{ij} = q_{ji}\) for all $i,j\in { [N]}$, then \(\mathcal{G}_{\textrm{LQ}}\) is a potential game, i.e.,  $\alpha_N\equiv  0$ regardless of the number of players \(N\). This symmetry condition is common in many interaction kernels, where player \(i\)'s influence on player \(j\) depends only on the distance between them \cite{aurell2018mean, carmona2018probabilistic, aurell2022stochastic}.  
\item \textbf{Asymmetric interaction.} 
Suppose that 
$G$ be a   sparse graph 
with a bounded degree
$\max_{i\in V}\deg(i) = k$
for some 
$k \geq 2$,
i.e., 
each vertex is connected to at most $k$ vertices.
Assume further  that the interaction weights $(q_{ij})_{i,j \in [N]}$ exhibit an exponential decay of the form
\begin{equation}
\label{eq:exponential_decay}
q_{ij} = w_i \eta^{c(i,j)}, \quad \forall i,j\in[N],
\end{equation}
where $(w_i)_{i\in { [N]}}$ are  distinct
   positive constants that are uniformly bounded in 
$N$,   $\eta\in (0,1)$
is a given constant,
and 
$  c(i,j)$ is   the 
(shortest-path) distance between vertices $i$ and $j$.
Such a structure models localized interactions, where a player's impact is strongest on their immediate neighbors and diminishes further away \cite{gamarnik2013correlation,gamarnik2014correlation, gu2025mean}. 
For clarity of exposition, we assume a sufficiently fast decay rate $\eta$  satisfying $\eta < 1/k$.

In this setting,  
by   \eqref{eq:alpha_N},
there exists a constant $C\ge 0$, independent of $\eta$, $k$ and $N$, such that 
\begin{align}
\label{eq:exponential_decay_alpha}
    \alpha_N \leq \frac{C}{N} \max_{i \in [N]} \sum_{j \ne i} \eta^{c(i,j)}
   \leq \frac{C}{N} \sum_{\ell=1}^{\infty} \eta^{\ell} k^\ell = 
    C\frac{   \eta k}{(1-\eta k)N},
\end{align}
where 
the second inequality used the fact  that, for any vertex \( v \in V \), the number of vertices at distance \( \ell \) from \( v \) is at most \( k^\ell \).
The bound \eqref{eq:exponential_decay_alpha} demonstrates that \(\alpha_N\) decays to zero as the number of players increases. Additionally,  \(\alpha_N\) vanishes as \(\eta \to 0\), reflecting the weakening interactions among players.

 \end{itemize}

\subsection{Constructing $\alpha$-NE for $\cG_{\textrm{LQ}}$}
An  $\alpha_N$-NE of 
 $\cG_{\textrm{LQ}}$ can be constructed by minimizing the corresponding $\alpha_N$-potential function \eqref{eq:potential_differential}.
 The structure of $\cG_{\textrm{LQ}}$ significantly simplifies the $\alpha_N$-potential function 
 compared to  the general   case studied in Sections \ref{sec:general_stochastic_game} and \ref{sec:MKV_DPP}. 
 Indeed, 
 as $X^{\bu}_{i}$ depends only on $u_i$, the sensitivity processes 
 $Y^{\bu, u'_i}_{t,j}\equiv 0$ for $i\not =j$,  reducing the dimension of the state process in \eqref{eq:potential_differential}  from $\cO(N^2)$ to $\cO(N)$.  Moreover, due to the LQ structure \eqref{eq: cost_ex}-\eqref{eq: dynamics_ex}, the 
$\alpha$-potential function becomes a LQ control problem, whose minimizer can be solved analytically. 
    
We consider     an extended state 
 dynamics  including both   the original   state dynamics \eqref{eq: dynamics_ex}  for $\bX^{\bu}$, and the dynamics for 
  the sensitivity processes $(Y^{\bu, u_i}_i)_{i\in { [N]}}$. Specifically, 
fix 
 a uniform random variable $\mathfrak r\in L^2(\cG; [0,1]) $, and
for each $\bu \in \cH^2(\sR^N)$, 
consider   the
 $\sR^{2N}$-valued $\sG$-adapted 
 square integrable process  $\sX^{  \mathfrak{r},\bu} $ 
 governed by 
\begin{equation}\label{eq:LQ_lifted}
  \d \sX_t = \left( A(t) \sX_t + \cI_{\mathfrak{r}} \bu_t \right) \d t + \Sigma(t) \d W_t,\quad 
  \sX_0  = \vcat(x_1, \cdots, x_N, 0_N),
\end{equation}  
where  $\cI_\mathfrak{r} \coloneqq \vcat(\mathfrak{r}\sI_N, \sI_N) \in \sR^{2N \times N}$,  $A(t) \coloneqq \diag(\tilde{A}(t),\tilde{A}(t)) \in \sS^{2N}$ with 
$\tilde{A}(t) \coloneqq \diag(a_1(t), \cdots, a_N(t))  $,  
and $\Sigma(t) = \vcat({\sigma}(t), 0_{N \times N}) \in \sR^{2N \times N}$ with ${\sigma}(t) \coloneqq \diag(\sigma_1(t), \cdots, \sigma_N(t))$.
The $\alpha$-potential function $\Phi$ for $\cG_{\textrm{LQ}}$  is given by (see  \eqref{eq:alpha_general} and 
\eqref{eq:cost_lifted} with $z=0$):
\begin{equation}
    \begin{aligned}
\label{eq:LQPotential}
    \Phi(\bu) 
    = \E \left[
    \int_0^T \int_{\cS}   \left(  \mathbbm{x}^\top Q \mathbbm{x} + 2 r \bu^\top_t  \bu_t \right) \d \mu_{t}^{   \mathfrak{r},\bu}  \d t  + \int_{\cS} \left(    \mx^\top \bar Q \mx +2 \mathfrak{p}^\top \mx \right)  \d \mu_{T}^{  \mathfrak{r},\bu} 
    \right], 
\end{aligned}
\end{equation}
where 
$\cS\coloneqq \sR^{2N}\times [0,1]$, 
$\mu_t^{\mathfrak{r},\bu} \coloneqq \cL(\sX_t^{\mathfrak{r}, \bu}, \mathfrak{r} | \cF_t)$ for all $t$, 
\(Q  \coloneqq \begin{pmatrix}
    \mymathbb{0}_N & \tilde{Q}^\top \\
    \tilde{Q} & \mymathbb{0}_{N}
\end{pmatrix} \in\sS^{2N} \) with
$\tilde{Q}\in \sR^{N\times N}$ given by $\tilde{Q}_{i,i} = \frac{1}{N} \sum_{k\ne i, k\in { [N]}}q_{ik}$
and $\tilde{Q}_{i,j} = -\frac{q_{ij}}{N} $ for all $i \ne j$, 
$\bar Q \coloneqq \begin{pmatrix}
    \mymathbb{0}_N & \Gamma  \\ 
    \Gamma & \mymathbb{0}_N
\end{pmatrix} \in\sS^{2N}$ with $\Gamma \coloneqq \diag(\gamma_1, \cdots, \gamma_N) \in \sS^N$, and $\mathfrak{p} \coloneqq - \vcat(0_N,  \gamma_1 d_1,\cdots,  \gamma_N d_N) \in \sR^{2N}$.
Above and hereafter,
for each $n\in \sN$, 
we denote by       $\sS^n$   the space of $n\times n$ symmetric matrices,
by $\mymathbb{0}_n$ the $n\times n$ zero matrix, 
and by  $\diag(a_1,\ldots, a_n)$  the diagonal matrix with diagonal elements $(a_1,\ldots, a_n)$.

The minimizer of \eqref{eq:LQPotential} can be characterized   with suitable ordinary differential
equations (ODEs). 
These ODEs differ from the  Riccati equations for  usual LQ control problems studied in
 \cite{sun2016open},
due to the additional dependence on the parameter $r$ in \eqref{eq:LQ_lifted} and \eqref{eq:LQPotential}. To see this,
let    $M_0\in C^1([0,T] ;  \sS^{2N})$ satisfy  the  following linear ODE:    
\begin{equation}\label{eq:M0}
\dot{M_0} + A^\top M_0 + M_0  A + Q = 0;  \quad 
M_0(T) = \bar Q,
\end{equation}
where the  dot    denotes the time derivative. Consider the following Riccati equation
for $M_1\in C^1([0,T];\sS^{4N})$:
\begin{equation}\label{eq:M1}
\begin{aligned}
\dot{M}_1 &+ \begin{pmatrix}
       A & \mymathbb{0}_{2N} \\   \mymathbb{0}_{2N} &  A  \end{pmatrix}  M_1 + M_1 \begin{pmatrix}
       A & \mymathbb{0}_{2N} \\   \mymathbb{0}_{2N} &  A  \end{pmatrix}
        - K_{M_0,M_1}^\top K_{M_0,M_1} = 0; 
       \quad 
M_1(T) = \mymathbb{0}_{4N},
\end{aligned}
\end{equation}
with  $K_{M_0,M_1}: [0,T]\to \sR^{N\times 4N}$
defined by
\begin{align}
    K_{M_0,M_1} & \coloneqq 
\begin{pmatrix}
       \begin{pmatrix}
          \mymathbb{0}_{N}    & \sI_N
         \end{pmatrix}  M_0
         &
              \begin{pmatrix}
          \sI_N &\mymathbb{0}_{N}   
         \end{pmatrix}  M_0
\end{pmatrix}
           + \tilde I  M_1 , 
           \quad
\tilde I  \coloneqq \begin{pmatrix}
    \frac{1}{2} \sI_{N}  & \sI_{N} & \frac{1}{3}\sI_{N}& \frac{1}{2} \sI_{N}
    \end{pmatrix}\in \sR^{N\times 4N}.
\end{align}
The constants in $\tilde I$
correspond to $\sE[\mathfrak r] $
and $\sE[\mathfrak r^2] $ for 
  the uniform random variable  $\mathfrak r$
  in \eqref{eq:LQ_lifted}. 
Given a solution $M_1$ to \eqref{eq:M1}, consider the following linear ODE
for $M_2\in C^1([0,T]; \sR^{4N})$:
\begin{equation}\label{eq:M2}
\dot{M}_2 + \begin{pmatrix}
       A & \mymathbb{0}_{2N} \\   \mymathbb{0}_{2N} &  A  \end{pmatrix}  M_2-    K_{M_0,M_1}^\top  \tilde I  M_2    = 0;\quad  
M_2(T) = \begin{pmatrix} 
 \mathfrak{p} \\ 0_{2N}\end{pmatrix}.
\end{equation}

 The following theorem constructs a  minimizer of 
  $\cH^2(\sR^N)\ni \bu \mapsto \Phi(\bu)\in \sR$
based on  solutions of  \eqref{eq:M0}, \eqref{eq:M1} and \eqref{eq:M2},
which subsequently yields  an $\alpha_N$-NE of the game $\cG_{\textrm{LQ}}$.
The proof is given in Section \ref{sec:lq_verification}.

\begin{theorem}\label{thm:lq_verification}
 
Suppose that   $M_0 \in C^1([0,T] ;  \sS^{2N}) $,
$M_1  \in C^1([0,T];\sS^{4N})$,
and 
$M_2\in C^1([0,T];\sR^{4N})$
 satisfy \eqref{eq:M0}, \eqref{eq:M1}, and \eqref{eq:M2},
respectively. 
{
Define 
$ 
     \bu^*_t = 
      -   K_{M_0,M_1}(t)
  \begin{pmatrix}
 \sE [ {\sX}_t^{\mathfrak{r}, \bu^*}|\cF_t]    \\ 
 \sE [\mathfrak r  {\sX}_t^{\mathfrak{r}, \bu^*} |\cF_t]
  \end{pmatrix}  
  -
    \tilde I M_2(t) 
   $   for all   $t\in [0,T]$.
   Assume that 
   $\bu^*=(u^*_i)_{i\in [N]}$ satisfies
$\|u^*_i\|_{\cH^2(\sR)}\le L$    for all $i\in [N]$,
   with 
   $L>0$
   in \eqref{eq:control_ex}.
Then  $\bu^*$ is 
  an $\alpha_N$-NE of  $\cG_{\textrm{LQ}} $,
  with $\alpha_N$  satisfying \eqref{eq:alpha_N}.
  }
Moreover,   the process  
    $F_t \coloneqq  \begin{pmatrix}
 \sE [ {\sX}_t^{\mathfrak{r}, \bu^*}|\cF_t]    \\ 
 \sE [\mathfrak r  {\sX}_t^{\mathfrak{r}, \bu^*} |\cF_t]
  \end{pmatrix} $, $t\in [0,T]$,  
 satisfies the linear  SDE 
\begin{align}
\label{eq:F_dynamics}
 \d  F_t
  =  \left[ 
  \left(\begin{pmatrix}
       A(t) & \mymathbb{0}_{2N} \\   \mymathbb{0}_{2N} &  A(t)  \end{pmatrix}
  -\tilde I^\top  K_{M_0,M_1}(t)\right) 
   F_t
  -\tilde I^\top \tilde I M_2(t)  
  \right]\d t + \begin{pmatrix}
        \Sigma(t) \\ \frac{1}{2} \Sigma(t)
    \end{pmatrix}\d W_t,
    \quad F_0 = \begin{pmatrix}
        \sX_0\\
        \frac{1}{2}\sX_0
    \end{pmatrix}.
 \end{align}

\end{theorem}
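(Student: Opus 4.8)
The plan is to identify the minimization of $\Phi$ over $\cH^2(\sR^N)$ with the conditional McKean--Vlasov control problem of Section~\ref{sec:MKV_DPP}, via Proposition~\ref{prop:law_invariance}, and to solve that problem by the verification argument underlying Theorem~\ref{thm: verification}, using the linear-quadratic structure to collapse the infinite-dimensional HJB equation~\eqref{eq:HJB} to the finite system~\eqref{eq:M0}--\eqref{eq:M2}. Since the pointwise action set is all of $\sR^N$ while the constraint $\|u_i\|_{\cH^2}\le L$ is imposed on the \emph{process} rather than pointwise, I would first minimize over the unconstrained class $\cH^2(\sR^N)$ and only afterwards verify that the minimizer lies in $\cA^{(N)}$. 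Writing $m_1(\mu):=\int_\cS\sx\,\mu(\d\sx,\d r)$, $m_2(\mu):=\int_\cS r\sx\,\mu(\d\sx,\d r)$ and $\mathfrak m(\mu):=\vcat(m_1(\mu),m_2(\mu))\in\sR^{4N}$, I would posit the quadratic-in-measure ansatz
\[
v(t,\mu)=\int_\cS \sx^\top M_0(t)\sx\,\mu(\d\sx,\d r)+\mathfrak m(\mu)^\top M_1(t)\,\mathfrak m(\mu)+2M_2(t)^\top\mathfrak m(\mu)+c(t),
\]
with a scalar $c\in C^1([0,T];\sR)$, $c(T)=0$, absorbing the $\mu$-independent diffusion terms. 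The terminal data $M_0(T)=\bar Q$, $M_1(T)=\mymathbb{0}_{4N}$, $M_2(T)=\vcat(\mathfrak p,0_{2N})$ are arranged precisely so that $v(T,\cdot)=\langle G,\cdot\rangle$.

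Next I would compute the derivatives entering~\eqref{eq:L_u}--\eqref{eq:M_u}. Setting $J_r:=\vcat(\sI_{2N},r\sI_{2N})$, one gets $\partial_\sx\tfrac{\delta v}{\delta\mu}=2M_0\sx+2J_r^\top M_1\mathfrak m+2J_r^\top M_2$, $\partial^2_{\sx\sx}\tfrac{\delta v}{\delta\mu}=2M_0$ and $\partial^2_{\sx\sx'}\tfrac{\delta^2 v}{\delta\mu^2}=2J_r^\top M_1 J_{r'}$. Inserting these into the Hamiltonian~\eqref{eq: hamiltonian} with $B=A\sx+\cI_r u$ (where $\cI_r=\vcat(r\sI_N,\sI_N)$) and $\Sigma=\vcat(\sigma,0_{N\times N})$, the control-dependent part of $\langle\mathbb L^u v,\mu\rangle+\langle F(t,\cdot,\cdot,u),\mu\rangle$ collapses---using the uniform $r$-marginal, so that $\int_\cS\cI_r^\top J_r^\top\,\mu(\d\sx,\d r)=\tilde I$ with entries set by $\sE[\mathfrak r]=\tfrac12$, $\sE[\mathfrak r^2]=\tfrac13$---to $2u^\top\!\big(K_{M_0,M_1}\mathfrak m+\tilde I M_2\big)+u^\top u$, where $K_{M_0,M_1}$ assembles the $M_0$- and $M_1$-contributions exactly as in its definition. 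This quadratic in $u$ is strictly convex, so its unconstrained minimizer is the claimed feedback $\hat a(t,\mu)=-K_{M_0,M_1}\mathfrak m-\tilde I M_2$. Reinserting $\hat a$ and grouping the remaining terms by their $\mu$-dependence---quadratic in $\sx$, quadratic in $\mathfrak m$, linear in $\mathfrak m$, and constant---produces, respectively, the Lyapunov equation~\eqref{eq:M0}, the Riccati equation~\eqref{eq:M1} (whose $-K^\top K$ term is exactly the completed square), the linear equation~\eqref{eq:M2}, and an ODE for $c$. This shows $v$ solves~\eqref{eq:HJB} on $\cP_2^{\textnormal{Unif}}(\cS)$; the explicit polynomial structure of the ansatz provides the smoothness needed to justify the Itô expansion in the verification step.

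I would then run that verification step as in Theorem~\ref{thm: verification}: Itô's formula along the flow $s\mapsto v(s,\cL(\sX_s^{\mathfrak r,\bu},\mathfrak r\mid\cF_s))$ together with~\eqref{eq:HJB} gives $v(0,\mu_0)\le\Phi(\bu)$ for all $\bu$, with equality along the feedback. To close the loop, I would condition the closed-loop dynamics~\eqref{eq:LQ_lifted} on $\cF_t$: since $\mathfrak r\perp\cF_t$, the vector $F_t=\vcat(\sE[\sX_t\mid\cF_t],\sE[\mathfrak r\sX_t\mid\cF_t])$ satisfies the \emph{linear} SDE~\eqref{eq:F_dynamics}, whose input $\tilde I^\top$ and diffusion $\vcat(\Sigma,\tfrac12\Sigma)$ again come from $\sE[\cI_\mathfrak r]$, $\sE[\mathfrak r\cI_\mathfrak r]$ and $\sE[\mathfrak r\mid\cF_t]=\tfrac12$. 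As~\eqref{eq:F_dynamics} has continuous coefficients on $[0,T]$, it admits a unique square-integrable solution, so $\bu^*=-K_{M_0,M_1}F-\tilde I M_2\in\cH^2(\sR^N)$ with norm bounded by a constant $C^*$ independent of $L$; taking $L\ge C^*$ makes the $\cH^2$-ball constraint inactive, whence the global minimizer $\bu^*$ also minimizes $\Phi$ over $\cA^{(N)}$. Proposition~\ref{prop:optimizer_and_NE} then upgrades $\bu^*$ to an $\alpha_N$-NE of $\cG_{\textrm{LQ}}$.

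I expect the main obstacle to be the coefficient-matching in the HJB: one must cleanly separate the pointwise-state quadratic, which yields the \emph{decoupled} Lyapunov equation~\eqref{eq:M0}, from the mean-field quadratic in $(m_1,m_2)$, which---combined with the completed square of the control---yields the \emph{coupled} Riccati equation~\eqref{eq:M1}, all while correctly tracking the weights $\tfrac12$ and $\tfrac13$ that constitute $\tilde I$. A secondary delicate point is that Theorem~\ref{thm: verification} optimizes over processes valued in $A=\sR^N$, so the genuine $\cH^2$-ball constraint of $\cG_{\textrm{LQ}}$ must be reconciled separately through the a posteriori bound $\|\bu^*\|_{\cH^2}\le C^*\le L$.
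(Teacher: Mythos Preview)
Your proposal is correct and follows essentially the same approach as the paper: a quadratic-in-measure ansatz for the value function, verification via It\^o's formula along the conditional flow, completion of the square to identify the optimal feedback, and conditioning on $\cF_t$ to obtain the linear SDE for $F$. The only stylistic difference is that you first verify the HJB equation~\eqref{eq:HJB} and then invoke Theorem~\ref{thm: verification}, whereas the paper carries out the It\^o expansion and completion of squares directly (in effect re-proving the verification theorem in this LQ instance); your treatment of the $\cH^2$-ball constraint via the a posteriori bound $\|\bu^*\|_{\cH^2}\le C^*\le L$ is slightly more explicit than the paper's.
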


\begin{remark}
Theorem \ref{thm:lq_verification}
leverages the  LQ structure of
$\cG_{\textrm{LQ}} $
to 
characterize  the $\alpha_N$-NE  $\bu^*$
as a feedback function of $F$,
which involves    finite conditional moments of   $({\sX}_t^{\mathfrak{r}, \bu^*}, \mathfrak r)$.
These moments serve as sufficient statistics for  the infinite-dimensional conditional law  $\cL({\sX}_t^{\mathfrak{r}, \bu^*}, \mathfrak r|\cF_t)$.
Notably, the process $F$ is Markovian and satisfies the linear SDE \eqref{eq:F_dynamics},
enabling the efficient computation  of the $\alpha_N$-NE.  
We remark  that  
the solvability of \eqref{eq:M0} and \eqref{eq:M2} follows from   linear ODE theory, 
and 
the solvability of  \eqref{eq:M1}
can be ensured at least for sufficiently small $T$.   
\end{remark}

\section{Proofs of main results}
\subsection{Proof of Theorem \ref{thm:potential_hessian}}
\label{sec:proof_theorem_hessian}

The following     lemmas  regarding the linear derivative
are given   in \cite[Lemmas 4.1 and 4.2]{guo2024towards},  and will be used in the proof of Theorem \ref{thm:potential_hessian}.

 \begin{lemma}
\label{lemma:derivative_line}
Suppose  
$\cA^{(N)}$ is    convex,
$i\in { [N]}$, 
and   $f: \cA^{(N)}\to \sR$ 
has  a linear  derivative $ \frac{\delta f}{\delta a_i} $ with respect to $\cA_i$.
Let $\ba=(a_i,a_{-i})\in  \cA^{(N)}$, $a'_i\in \cA_i$,
and for each $\varepsilon \in [0,1]$, let $\ba^\varepsilon =( a_i+\varepsilon(a'_i-a_i),a_{-i})$.
Then 
the function 
$[0,1]\ni \varepsilon\mapsto f(\ba^\varepsilon)\in \sR$
is differentiable and 
$\frac{\d }{\d \varepsilon }f(\ba^\varepsilon)=  \frac{\delta f}{\delta a_i} (\ba^\varepsilon ; a'_i-a_i)
 $
 for all $\varepsilon\in [0,1]$.  
\end{lemma}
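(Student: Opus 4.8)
The plan is to establish differentiability pointwise on $[0,1]$ by reducing both the right- and left-hand difference quotients of $\varepsilon \mapsto f(\ba^\varepsilon)$ to the defining limit of the linear derivative in Definition \ref{def:linear_derivative}, exploiting two facts: the whole segment $\{\ba^\varepsilon\}_{\varepsilon\in[0,1]}$ lies in $\cA^{(N)}$ by convexity, so $f$ is defined at every base point I will use; and $\frac{\delta f}{\delta a_i}(\ba';\cdot)$ is linear on $\spn(\cA_i)$ for each $\ba'$. The whole argument is a reparametrization/chain-rule computation.

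First I would fix $\varepsilon_0\in[0,1)$ and compute the right derivative. Writing $\tilde a_i := a_i+\varepsilon_0(a'_i-a_i)$ for the $i$-th coordinate of $\ba^{\varepsilon_0}=(\tilde a_i,a_{-i})$, the $i$-th coordinate of $\ba^{\varepsilon_0+h}$ is $\tilde a_i+h(a'_i-a_i)$. Since $a'_i-\tilde a_i=(1-\varepsilon_0)(a'_i-a_i)$, I rewrite $\tilde a_i+h(a'_i-a_i)=\tilde a_i+\varepsilon(a'_i-\tilde a_i)$ with $\varepsilon:=h/(1-\varepsilon_0)\searrow 0$. Applying the defining limit of $\frac{\delta f}{\delta a_i}$ at the base point $\ba^{\varepsilon_0}$ with the admissible deviation $a'_i\in\cA_i$, and then using linearity together with $a'_i-\tilde a_i=(1-\varepsilon_0)(a'_i-a_i)$, gives
\[
\lim_{h\searrow 0}\frac{f(\ba^{\varepsilon_0+h})-f(\ba^{\varepsilon_0})}{h}=\frac{1}{1-\varepsilon_0}\frac{\delta f}{\delta a_i}(\ba^{\varepsilon_0};a'_i-\tilde a_i)=\frac{\delta f}{\delta a_i}(\ba^{\varepsilon_0};a'_i-a_i).
\]
For $\varepsilon_0\in(0,1]$ I would compute the left derivative symmetrically, steering toward $a_i$ instead: the $i$-th coordinate of $\ba^{\varepsilon_0-h}$ equals $\tilde a_i-h(a'_i-a_i)=\tilde a_i+\varepsilon(a_i-\tilde a_i)$ with $\varepsilon:=h/\varepsilon_0$, using $a_i-\tilde a_i=-\varepsilon_0(a'_i-a_i)$. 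Applying the definition at $\ba^{\varepsilon_0}$ with deviation $a_i\in\cA_i$ and invoking linearity once more returns the same value $\frac{\delta f}{\delta a_i}(\ba^{\varepsilon_0};a'_i-a_i)$. Matching the one-sided derivatives at interior points, and taking the single relevant one-sided derivative at the endpoints $\varepsilon_0\in\{0,1\}$, then yields differentiability on $[0,1]$ with the asserted formula.

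There is no deep obstacle here; the only point requiring care is that the linear-derivative limit is granted only for deviations of the form $a''_i-\tilde a_i$ with $a''_i\in\cA_i$ anchored at the current base coordinate $\tilde a_i$, whereas the direction appearing in the difference quotient is the fixed vector $a'_i-a_i$. Reconciling the two is exactly the step of writing $a'_i-a_i$ as a scalar multiple of an admissible deviation ($a'_i-\tilde a_i$ on the right, $a_i-\tilde a_i$ on the left) and absorbing the scalar through linearity of $\frac{\delta f}{\delta a_i}(\ba^{\varepsilon_0};\cdot)$; convexity of $\cA^{(N)}$ is what guarantees these intermediate points are legitimate arguments of $f$.
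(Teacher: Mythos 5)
Your proposal is correct. Note, however, that the paper itself contains no proof of this lemma to compare against: it is imported verbatim from \cite[Lemmas 4.1 and 4.2]{guo2024towards}, so your argument serves as a self-contained replacement for that citation. The reparametrization you use is the natural one and is carried out without gaps: for the right derivative at $\varepsilon_0\in[0,1)$ you correctly anchor the linear-derivative limit at the base point $\ba^{\varepsilon_0}$ (admissible by convexity of $\cA^{(N)}$, hence of each $\cA_i$), rewrite the increment direction as $a'_i-a_i=\tfrac{1}{1-\varepsilon_0}(a'_i-\tilde a_i)$ with $a'_i\in\cA_i$ the deviation target, and absorb the scalar by linearity of $\frac{\delta f}{\delta a_i}(\ba^{\varepsilon_0};\cdot)$ on $\spn(\cA_i)$; for the left derivative at $\varepsilon_0\in(0,1]$ you steer toward $a_i\in\cA_i$ and the sign and the factor $\varepsilon_0$ cancel exactly, since $a_i-\tilde a_i=-\varepsilon_0(a'_i-a_i)$ gives
\[
-\frac{1}{\varepsilon_0}\,\frac{\delta f}{\delta a_i}\bigl(\ba^{\varepsilon_0};a_i-\tilde a_i\bigr)=\frac{\delta f}{\delta a_i}\bigl(\ba^{\varepsilon_0};a'_i-a_i\bigr).
\]
Matching the two one-sided limits at interior points and taking the single relevant one-sided derivative at $\varepsilon_0\in\{0,1\}$ is exactly what differentiability on the closed interval requires, and no continuity of $\varepsilon\mapsto\frac{\delta f}{\delta a_i}(\ba^{\varepsilon};a'_i-a_i)$ is needed for the pointwise identity, so your proof stands as written.
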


 \begin{lemma}
 \label{lemma:multi-dimension_derivative}
 Suppose $\cA^{(N)} $ is convex  and for all $i\in { [N]}$, $f: \cA^{(N)}\to \sR$ has a linear derivative $ \frac{\delta f}{\delta a_i} $ with respect to $\cA_i$ such that for all $\bz,\ba\in \cA^{(N)}$ and $a'_i\in \cA_i $,  $[0,1]^N\ni {{\varepsilon}}\mapsto  \frac{\delta f}{\delta a_i}(\bz+\varepsilon\cdot (\ba-\bz) ;a'_i)$ is continuous at $0$, where  $\bz+\varepsilon\cdot (\ba-\bz)\coloneqq (z_i+\varepsilon_i(a_{i}-z_i))_{i\in { [N]}}$. Then for all  $\bz,\ba\in \cA^{(N)}$, the map $[0,1]\ni r\mapsto  f(\bz+r( \ba -\bz))\in \sR$ is differentiable and $\frac{\d}{\d r}   f(\bz+r( \ba -\bz))=\sum_{j=1}^N \frac{\delta  f}{ \delta a_j}(\bz+r(\ba-\bz); a_j-z_j)$.  
 \end{lemma}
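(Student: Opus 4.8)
The plan is to reduce the multi-dimensional chain rule to the one-dimensional statement of Lemma \ref{lemma:derivative_line} by a telescoping argument, changing one coordinate at a time, and then to pass to the limit using the multi-parameter continuity hypothesis. Throughout, fix $\bz=(z_j)_{j\in I_N}$ and $\ba=(a_j)_{j\in I_N}$ in $\cA^{(N)}$, write $\gamma(r)\coloneqq \bz+r(\ba-\bz)$ (which lies in $\cA^{(N)}$ by convexity), and set $g(r)\coloneqq f(\gamma(r))$. I will compute the right derivative of $g$ at a fixed $r\in[0,1)$; the left derivative at $r\in(0,1]$ is entirely analogous, and the two together yield differentiability on $[0,1]$ with the asserted value.

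First I would telescope. For $h\in(0,1-r]$ and $1\le j\le N$, let $p_j(h)\in\cA^{(N)}$ be the profile whose $\ell$-th coordinate equals $z_\ell+(r+h)(a_\ell-z_\ell)$ for $\ell<j$ and $z_\ell+r(a_\ell-z_\ell)$ for $\ell\ge j$, so that $p_1(h)=\gamma(r)$ and, writing $p_{N+1}(h)=\gamma(r+h)$,
\[
g(r+h)-g(r)=\sum_{j=1}^N\big(f(p_{j+1}(h))-f(p_j(h))\big).
\]
Here $p_{j+1}(h)$ and $p_j(h)$ agree off coordinate $j$, where they take the values $z_j+r(a_j-z_j)$ and $z_j+(r+h)(a_j-z_j)$; both lie in $\cA_j$ since $\cA_j$ is convex and contains $z_j,a_j$. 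Hence I may apply Lemma \ref{lemma:derivative_line} (with player index $j$, base $z_j$, target $a_j$) to the one-coordinate function $\phi_j(s)$ obtained from $f$ by setting its $j$-th coordinate to $z_j+s(a_j-z_j)$ and freezing the remaining coordinates at their common $p_j(h)$/$p_{j+1}(h)$ values. Lemma \ref{lemma:derivative_line} gives that $\phi_j$ is differentiable on $[0,1]$ with $\phi_j'(s)=\frac{\delta f}{\delta a_j}(\,\cdot\,;a_j-z_j)$ evaluated at the profile with $j$-th coordinate $z_j+s(a_j-z_j)$. By the mean value theorem there is $\theta_j=\theta_j(h)\in(r,r+h)$ with $f(p_{j+1}(h))-f(p_j(h))=\phi_j(r+h)-\phi_j(r)=h\,\frac{\delta f}{\delta a_j}(q_j(h);a_j-z_j)$, where $q_j(h)$ has coordinates $z_\ell+(r+h)(a_\ell-z_\ell)$ for $\ell<j$, $z_j+\theta_j(a_j-z_j)$ for $\ell=j$, and $z_\ell+r(a_\ell-z_\ell)$ for $\ell>j$. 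Dividing by $h$ yields
\[
\frac{g(r+h)-g(r)}{h}=\sum_{j=1}^N\frac{\delta f}{\delta a_j}\big(q_j(h);a_j-z_j\big).
\]

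It remains to pass $h\searrow 0$ inside this finite sum. Since $\theta_j(h)\to r$, every coordinate of $q_j(h)$ tends to that of $\gamma(r)$; more precisely, taking base point $\gamma(r)$ and target $\ba$ and using $(\ba)_\ell-(\gamma(r))_\ell=(1-r)(a_\ell-z_\ell)$ componentwise, I can write $q_j(h)=\gamma(r)+\tilde\varepsilon\cdot(\ba-\gamma(r))$ with $\tilde\varepsilon\in[0,1]^N$ for $h$ small and $\tilde\varepsilon\to 0$ as $h\searrow0$. Writing $a_j-z_j=\tfrac{1}{1-r}\big((\ba)_j-(\gamma(r))_j\big)$, invoking the linearity of $\frac{\delta f}{\delta a_j}(\,\cdot\,;\cdot)$ in its second argument (Definition \ref{def:linear_derivative}), and applying the assumed continuity at $0$ of $\tilde\varepsilon\mapsto \frac{\delta f}{\delta a_j}(\gamma(r)+\tilde\varepsilon\cdot(\ba-\gamma(r));\,\cdot\,)$, each summand converges to $\frac{\delta f}{\delta a_j}(\gamma(r);a_j-z_j)$. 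Hence the right derivative of $g$ at $r$ equals $\sum_{j=1}^N\frac{\delta f}{\delta a_j}(\gamma(r);a_j-z_j)$; the symmetric argument with target $\bz$ handles $h\nearrow0$ and gives the same value for the left derivative at interior $r$, completing the proof.

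The main obstacle I anticipate is precisely this last limit. The evaluation points $q_j(h)$ approach $\gamma(r)$ simultaneously in several coordinates at different rates (coordinates below $j$ at rate $h$, coordinate $j$ at the intermediate rate $\theta_j-r$, coordinates above $j$ frozen), so mere one-dimensional continuity of the linear derivative along the diagonal $r\mapsto\gamma(r)$ would not be enough. This is exactly why the hypothesis is formulated as continuity at the corner $0\in[0,1]^N$ of the multi-parameter map $\varepsilon\mapsto\frac{\delta f}{\delta a_i}(\bz+\varepsilon\cdot(\ba-\bz);a_i')$; recentering the base point at $\gamma(r)$ and choosing $\ba$ or $\bz$ as target according to the sign of $h$ is the device that lets me invoke this corner continuity and make the telescoped sum converge coordinatewise.
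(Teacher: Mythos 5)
Your proof is correct: the telescoping decomposition, the coordinate-wise application of Lemma \ref{lemma:derivative_line} combined with the mean value theorem, and the recentering of the staircase points $q_j(h)$ at $\gamma(r)$ (with target $\ba$ or $\bz$ according to the sign of the increment, plus linearity in the direction argument to reduce $a_j-z_j$ to the admissible directions $a_j, z_j \in \cA_j$) form a complete argument, and the corner-continuity hypothesis on $[0,1]^N$ is invoked exactly where it is genuinely needed. The paper itself gives no proof of this lemma — it quotes it from \cite[Lemma 4.2]{guo2024towards} — and your argument is the standard one for this statement, so there is nothing further to compare.
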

\begin{proof}[Proof of Theorem \ref{thm:potential_hessian}]

By Condition \ref{item:continuity}  
and Lemma \ref{lemma:multi-dimension_derivative},
$[0,1]\ni r\mapsto \frac{\delta V_j}{\delta a_j}\left(\bz+r(\ba-\bz) ; a_j-z_j\right)\in \sR
$ is differentiable, and hence $\Phi$ in \eqref{eq:Phi} is well-defined.

We now prove that   $\Phi$ has a linear derivative with respect to $\cA_i$ for all $i\in { [N]}$.
To this end, 
let $i\in { [N]}$, $\ba\in \cA^{(N)} $ and $a'_i\in \cA_i$.
For all $\vep \in (0,1]$, let $\ba^{\varepsilon}:=\left(a_i+\varepsilon\left(a_i^{\prime}-a_i\right), a_{-i}\right)$. 
By the definition of $\Phi$ in \eqref{eq:Phi},
    $$
        \Phi\left(\ba^{\vep}\right)-\Phi(\ba)=  \int_0^1 \sum_{j=1}^N \frac{\delta V_j}{\delta a_j}\left(\bz+r\left(\ba^{\vep}-\bz\right) ; a_j+\vep \delta_{ji}\left(a_i^{\prime}-a_i\right)-z_j\right) \d r 
         -\int_0^1 \sum_{j=1}^N \frac{\delta V_j}{\delta a_j}\left(\bz+r(\ba-\bz) ; a_j-z_j\right) \d r.
    $$
Then by  $\bz+r\left(\ba^{\vep}-\bz\right) \in \cA^{(N)}$, for all $\vep \in(0,1]$,
\begin{equation}\label{eqn: Phi - Phi over ep}
      \begin{aligned}
          &\frac{\Phi\left(\ba^{\vep}\right)-\Phi(\ba)}{\vep}
         =\frac{1}{\vep} \int_0^1 \sum_{j=1}^N\left(\frac{\delta V_j}{\delta a_j}\left(\bz+r\left(\ba^{\vep}-\bz\right) ; a_j-z_j\right)-\frac{\delta V_j}{\delta a_j}\left(\bz+r(\ba-\bz) ; a_j-z_j\right)\right) \mathrm{d} r \\
        & \quad+\frac{1}{\vep} \int_0^1 \sum_{j=1}^N \vep \delta_{j i} \frac{\delta V_j}{\delta a_j}\left(\bz+r\left(\ba^{\vep}-\bz\right) ; a_j^{\prime}-a_j\right) \mathrm{d} r \\
        & =\int_0^1 \sum_{j=1}^N \frac{1}{\vep}\left(\frac{\delta V_j}{\delta a_j}\left(\bz+r\left(\ba^{\vep}-\bz\right) ; a_j-z_j\right)-\frac{\delta V_j}{\delta a_j}\left(\bz+r(\ba-\bz) ; a_j-z_j\right)\right) \mathrm{d} r 
        \\
        & \quad
        +\int_0^1 \frac{\delta V_i}{\delta a_i}\left(\bz+r\left(\ba^{\vep}-\bz\right) ; a_i^{\prime}-a_i\right) \d r.
    \end{aligned}
\end{equation}
To send $\vep \rightarrow 0$ in the above equation, note that for all $\vep \in[0,1], r\in[0,1], \left(\bz+r\left(\ba^{\vep}-\bz\right)\right)_{-i}=z_{-i}+r\left(a_{-i}-z_{-i}\right)$ and
$
    \left(\bz+r\left(\ba^{\vep}-\bz\right)\right)_i  =z_i+r\left(a_i+\vep\left(a_i^{\prime}-a_i\right)-z_i\right) 
     =z_i+r\left(a_i-z_i\right)+\vep\left(\left(z_i+r\left(a_i^{\prime}-z_i\right)\right)-\left(z_i+r\left(a_i-z_i\right)\right)\right)
$
with $z_i + r\left(a_i-z_i\right), z_i+r\left(a_i^{\prime}-z_i\right) \in \cA_i$. Thus for all $j \in { [N]}$, 
the twice differentiability of $V_j$ and Lemma \ref{lemma:derivative_line} 
imply that $\vep \mapsto \frac{\delta V_j}{\delta a_j}\left(\bz+r\left(\ba^{\vep}-\bz\right) ; a_j-z_j\right)$ is differentiable on $[0,1]$ and 
$$
\begin{aligned}
\frac{\mathrm{d}}{\mathrm{d} \varepsilon} \frac{\delta V_j}{\delta a_j}\left(\bz+r\left(\ba^{\varepsilon}-\bz\right) ; a_j-z_j\right) & =\frac{\delta^2 V_j}{\delta a_j \delta a_i}\left(\bz+r\left(\ba^{\varepsilon}-\bz\right) ; a_j-z_j, r\left(a_i^{\prime}-a_i\right)\right) \\
& =\frac{\delta^2 V_j}{\delta a_j \delta a_i}\left(\bz+r\left(\ba^{\varepsilon}-\bz\right) ; a_j-z_j, a_i^{\prime}-a_i\right) r,
\end{aligned}
$$
where the last identity used the linearity of $\frac{\delta^2 V_j}{\delta a_j \delta a_i}$ in its last component. Hence, by the mean value theorem and Condition \ref{item: boundedness}, for all $\vep \in (0,1]$,
$$ \begin{aligned}
& \left|\frac{1}{\vep}\left(\frac{\delta V_j}{\delta a_j}\left(\bz+r\left(\ba^{\varepsilon}-\bz\right) ; a_j-z_j\right)-\frac{\delta V_j}{\delta a_j}\left(\bz+r(\ba-\bz) ; a_j-z_j\right)\right)\right| \\
& \quad \leq \sup _{r,\, \vep \in[0,1]}\left|\frac{\delta^2 V_j}{\delta a_j \delta a_i}\left(\bz+r\left(\ba^{\vep}-\bz\right) ; a_j-z_j, a_i^{\prime}-a_i\right) r\right| < \infty.
\end{aligned} $$
Similarly, as $a_i^{\prime}-a_i \in \operatorname{span}\left(\cA_i\right)$, by the twice differentiability of $V_i$, for all $r \in(0,1)$,
$\lim _{\varepsilon \downarrow 0} \frac{\delta V_i}{\delta a_i}(\bz+r\left(\ba^{\varepsilon}-\bz\right) ; a_i^{\prime}-a_i)=\frac{\delta V_i}{\delta a_i}(\bz+r(\ba-\bz) ; a_i^{\prime}-a_i),$
and for all $r, \vep \in[0,1]$, by the mean value theorem, there exists $\tilde\vep \in [0,1]$ such that  
\begin{equation}
\label{eq:dV_da_r_eps}
\begin{aligned}
& \left|\frac{\delta V_i}{\delta a_i}\left(\bz+r\left(\ba^{\varepsilon}-\bz\right) ; a_i^{\prime}-a_i\right)\right|  \leq\left|\frac{\delta V_i}{\delta a_i}\left(\bz+r(\ba-\bz) ; a_i^{\prime}-a_i\right)\right|+\left|\frac{\delta^2 V_i}{\delta a_i \delta a_i}\left(z+r\left(\ba^{ \tilde\varepsilon}-\bz\right) ; a_i^{\prime}-a_i, a_i^{\prime}-a_i\right) r\right|.
\end{aligned}
\end{equation}
Using Lemma \ref{lemma:multi-dimension_derivative}, 
for all $a'_i\in \cA_i$,
\begin{equation}\label{eqn: dr dV}
    \frac{\d}{\d r} \frac{\delta V_i}{\delta a_i}\left(\bz+r(\ba-\bz) ; a_i^{\prime}\right)=\sum_{j=1}^N \frac{\delta^2 V_i}{\delta a_i \delta a_j}\left(\bz+r(\ba-\bz) ; a_i^{\prime}, a_j-z_j\right),
\end{equation}
which along with \eqref{eq:dV_da_r_eps} and Condition \ref{item: boundedness} implies that 
$\sup_{(r,\vep) \in [0,1]^2}\left|\frac{\delta V_i}{\delta a_i}\left(\bz+r\left(\ba^{\varepsilon}-\bz\right) ; a_i^{\prime}-a_i\right)\right|<\infty$. 
Hence, letting $\vep \rightarrow 0$ in \eqref{eqn: Phi - Phi over ep} and using Lebesgue's dominated convergence theorem give
\begin{align*}
    \left.\frac{\d}{\d \vep} \Phi\left(\ba^{\vep}\right)\right|_{\vep=0}     =\int_0^1 \sum_{j=1}^N \frac{\delta^2 V_j}{\delta a_j \delta a_i}\left(\bz+r(\ba-\bz) ; a_j-z_j, a_i^{\prime}-a_i\right) r \d r+\int_0^1 \frac{\delta V_i}{\delta a_i}\left(\bz+r(\ba-\bz) ; a_i^{\prime}-a_i\right) \d r.
\end{align*}
Let $\mathcal E:[0,1]\to \sR$ be given by
\begin{equation}
    \label{eq:residual}
    \cE_r \coloneqq 
     \sum_{j=1}^N
     \left(
     \frac{\delta^2 V_j}{\delta a_j \delta a_i}\left(\bz+r(\ba-\bz) ; a_j-z_j, a_i^{\prime}-a_i\right) -  \frac{\delta^2 V_i}{\delta a_i \delta a_j} \left(\bz+r(\ba-\bz) ; a_i^{\prime}-a_i, a_j-z_j \right) 
     \right).
\end{equation}
Then by \eqref{eqn: dr dV},  
    \begin{align} 
    \label{eqn: d_Phi_d_epsilon}
    \begin{split}
      \left.\frac{\d}{\d \vep} \Phi\left(\ba^{\vep}\right)\right|_{\vep=0}    
    & = \int_0^1 \left( \sum_{j=1}^N \frac{\delta^2 V_i}{\delta a_i \delta a_j}  \left(\bz+r(\ba-\bz) ;  a_i^{\prime}-a_i, a_j-z_j\right) 
    +\cE_r \right) r \d r
    +\int_0^1 \frac{\delta V_i}{\delta a_i}\left(\bz+r(\ba-\bz) ; a_i^{\prime}-a_i\right) \d r  \\
    & = \int_0^1 r \frac{\d}{\d r}\left(\frac{\delta V_i}{\delta a_i}\left(\bz+r(\ba-\bz) ; a_i^{\prime}-a_i\right)\right) \d r+\int_0^1 \frac{\delta V_i}{\delta a_i}\left(\bz+r(\ba-\bz) ; a_i^{\prime}-a_i\right) \d r     + \int_0^1 \cE_r   r\d r   \\
    & = \frac{\delta V_i}{\delta a_i}\left(\ba ; a_i^{\prime}-a_i\right) + \int_0^1 \cE_r r \d r,
\end{split}
\end{align}
where the last line uses the integration by part formula. 
This proves the linear differentiability of $\Phi$.

Now we prove $\Phi$ is an $\alpha$-potential function of $\cG$. Let $i \in { [N]}, a_i^{\prime} \in \cA_i$ and $\ba \in \cA^{(N)}$. For each $\varepsilon \in[0,1]$, let $\ba^{\varepsilon}=\left(a_i+\varepsilon\left(a_i^{\prime}-a_i\right), a_{-i}\right) \in \cA^{(N)}$. By the differentiability of $V_i$ and Lemma \ref{lemma:derivative_line}, $\frac{\mathrm{d}}{\mathrm{d} \varepsilon} V_i\left(\ba^{\varepsilon}\right)=\frac{\delta V_i}{\delta a_i}\left(\ba^{\varepsilon} ; a_i^{\prime}-a_i\right)$ for all $\varepsilon \in[0,1]$, and $\varepsilon \mapsto \frac{\delta V_i}{\delta a_i}\left(\ba^{\varepsilon} ; a_i^{\prime}-a_i\right)$ is differentiable on $[0,1]$. This implies that $\varepsilon \mapsto V_i\left(\ba^{\varepsilon}\right)$ is continuously differentiable on $[0,1]$. 
Similarly,   by Lemma \ref{lemma:derivative_line}  and \eqref{eqn: d_Phi_d_epsilon} and the continuity assumption, $[0,1] \ni \varepsilon \mapsto \Phi\left(\ba^{\varepsilon}\right) \in \mathbb{R}$ is also continuously differentiable with 
$
   \frac{\d}{\d \vep} \Phi\left(\ba^{\vep}\right) =  \frac{\delta V_i}{\delta a_i}\left(\ba^\vep; a_i^{\prime}-a_i\right) + \int_0^1 \cE_{r,\vep} r \d r,
$
where $\cE_{r,\vep}$ is given by
$
\cE_{r,\vep} = \sum_{j=1}^N
     \left(
     \frac{\delta^2 V_j}{\delta a_j \delta a_i}\left(\bz+r(\ba^\vep-\bz); a_j-z_j, a_i^{\prime}-a_i\right) -  \frac{\delta^2 V_i}{\delta a_i \delta a_j} \left(\bz+r(\ba^\vep-\bz) ; a_i^{\prime}-a_i, a_j-z_j \right) 
     \right).
$
Hence by the fundamental theorem of calculus,
$$\begin{aligned} 
      V_i\left(\left(a_i^{\prime}, a_{-i}\right)\right)-V_i\left(\left(a_i, a_{-i}\right)\right) &=\int_0^1 \frac{\delta V_i}{\delta a_i}\left(\ba^{\varepsilon} ; a_i^{\prime}-a_i\right) \mathrm{d} \varepsilon 
     =\int_0^1 \frac{\d}{\d \vep} \Phi\left(\ba^{\vep}\right) \mathrm{d} \varepsilon - \int_{0}^1 \int_0^1  \cE_{r,\vep} r \d r \d \vep \\ 
    & =\Phi\left(\left(a_i^{\prime}, a_{-i}\right)\right)-\Phi\left(\left(a_i, a_{-i}\right)\right)  - \int_{0}^1 \int_0^1 \cE_{r,\vep}  r \d r \d \vep.
\end{aligned}
$$
Finally, the desired upper bound of $\alpha$ follows from the fact that 
$$
\left|\int_{0}^1 \int_0^1 \cE_{r,\vep}  r \d r \d \vep\right|
\le { 2}\sup_{i \in { [N]}, a'_i\in \cA_i, \ba, \ba''     \in \cA^{(N)}}
   \sum_{j=1}^N\left| \frac{\delta^2 V_i}{\delta a_i \delta a_j} \left(\ba ; a_i^{\prime} , a''_j  \right) 
-\frac{\delta^2 V_j}{\delta a_j \delta a_i}\left(\ba ; a''_j , a_i^{\prime} \right)  
 \right|.
$$
due to the bilinearity of $\frac{\delta^2 V_j}{\delta a_j \delta a_i}$ and $\frac{\delta^2 V_i}{\delta a_i \delta a_j}${, and the fact that $\int_0^1 \int_0^1 r \d r \d \epsilon = \frac{1}{2}$}.
This finishes the proof.
\end{proof}

\subsection{Proof of Theorem \ref{thm:value_jacobian_open}}
\label{sec:open_loop_jacobian_proof}

The following propositions estimate the moments of the state process $\bX^{\bu}$  and the   sensitivity processes $\bY^{\bu, u'_h}$
and $\bZ^{\bu, u'_h, u''_\ell}$. 
The proofs of these propositions are included in Section \ref{sec:open_loop_moment_estimate}. 

\begin{prop}
\label{prop:X_i_moment_open}
Suppose Assumption \ref{assum:regularity} holds. 
For each $\bu \in \cH^p(\sR^N)$,
the solution  $\bX^{\bu} \in \cH^p(\sR^N)$
to  \eqref{eq:X_i_u_i_open} 
satisfies for all $i\in { [N]}$,
 $\sup_{t\in [0,T]} \sE[| X^{{\bu}}_{t,i}|^p ]  \leq C_X^{ i,p }, $ 
with the constant $C_X^{i,p}$  defined by
$C_X^{i,p}  \coloneqq
 \Big( 
 |x_i|^p + (p-1)\|\sigma_i\|^p_{L^p}+ L^{b} T
 + \| u_i\|_{\mathcal{H}^p (\mathbb{R})}^p
 + \frac{L^{b}_y }{N}       
 \sum_{k=1}^N \big( |x_k|^p + (p-1)\|\sigma_k\|^p_{L^p}+ L^{b} T +  \| u_k\|_{\mathcal{H}^p (\mathbb{R})}^p \big) 
 \Big)e^{  c_p (  L^{b} +     L^{b}_y+1    )  T },$
and  $c_p\ge 1$ is a   constant  depending only on $p$. 

\end{prop}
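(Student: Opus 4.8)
The plan is to apply It\^o's formula to $t\mapsto |X^{\bu}_{t,i}|^p$, take expectations after a localization, and close the resulting \emph{system} of scalar integral inequalities by a two-stage Gr\"onwall argument that keeps player $i$'s own data separate from the averaged mean-field coupling. As preparation I would record the linear growth of the drift implied by $b_i\in\mathscr{F}^{0,2}([0,T]\times\sR\times\sR^N;\sR)$: since $|b_i(t,0,0)|\le L^b$, $|\partial_x b_i|\le L^b$ and $|\partial_{y_k}b_i|\le L^b_y/N$, the fundamental theorem of calculus gives
\begin{equation*}
|b_i(t,x,y)|\le L^b + L^b|x| + \frac{L^b_y}{N}\sum_{k=1}^N|y_k|,\qquad (t,x,y)\in[0,T]\times\sR\times\sR^N.
\end{equation*}
For $p\ge 2$ the map $x\mapsto|x|^p$ is $C^2$, so It\^o applied to $|X^{\bu}_{t,i}|^p$, localized by $\tau_n\coloneqq\inf\{t\ge 0:|X^{\bu}_{t,i}|\ge n\}$ to remove the stochastic integral, yields after taking expectations
\begin{equation*}
\sE|X^{\bu}_{t\wedge\tau_n,i}|^p = |x_i|^p + \sE\!\int_0^{t\wedge\tau_n}\!\Big(p|X_{s,i}|^{p-1}\operatorname{sgn}(X_{s,i})\big(b_i(s,X_{s,i},\bX_s)+u_{s,i}\big)+\tfrac{p(p-1)}{2}|X_{s,i}|^{p-2}\sigma_i(s)^2\Big)\d s,
\end{equation*}
with existence and uniqueness of $\bX^{\bu}$ supplied by the global Lipschitz drift and bounded $\sigma_i$; for $1\le p<2$ I would first mollify $|x|^p$ by $(x^2+\delta)^{p/2}$ and let $\delta\searrow 0$.

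The next step is to bound the integrand by Young's inequality term by term, after inserting the drift growth estimate. The products $|X_{s,i}|^{p-1}\cdot L^b$ and $|X_{s,i}|^{p-1}|u_{s,i}|$ split into a multiple of $|X_{s,i}|^p$ plus the source terms $L^b$ and $|u_{s,i}|^p$ (integrating the former over time produces the $L^bT$ contribution); the coupling $|X_{s,i}|^{p-1}|X_{s,k}|$ splits into $|X_{s,i}|^p$ and $|X_{s,k}|^p$; and the It\^o correction gives $\tfrac{(p-1)(p-2)}{2}|X_{s,i}|^p+(p-1)\sigma_i(s)^p$, which is the origin of the $(p-1)\|\sigma_i\|_{L^p}^p$ term. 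Writing $\Psi_i(t)\coloneqq\sE|X^{\bu}_{t,i}|^p$ and letting $n\to\infty$ (monotone convergence and Fatou, which simultaneously yields $\bX^{\bu}\in\cH^p(\sR^N)$) leaves the closed system
\begin{equation*}
\Psi_i(t)\le a_i + \int_0^t\!\Big(c_p(L^b+L^b_y+1)\,\Psi_i(s)+\frac{L^b_y}{N}\sum_{k=1}^N\Psi_k(s)\Big)\d s,\quad a_i\coloneqq|x_i|^p+(p-1)\|\sigma_i\|_{L^p}^p+L^bT+\|u_i\|_{\cH^p(\sR)}^p.
\end{equation*}

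The decisive step is the two-stage Gr\"onwall estimate. Summing over $i\in I_N$, the total coupling $\sum_i\tfrac{L^b_y}{N}\sum_k\Psi_k=L^b_y\sum_k\Psi_k$ stays $O(1)$ precisely because of the $1/N$ decay built into $\mathscr{F}^{0,2}$, so $S(t)\coloneqq\sum_k\Psi_k(t)$ obeys a scalar Gr\"onwall inequality and hence $\tfrac1N\sum_k\Psi_k(s)\le\tfrac1N\big(\sum_k a_k\big)e^{c_p(L^b+L^b_y+1)T}$. Feeding this bound back into the inequality for $\Psi_i$ as a known source and applying scalar Gr\"onwall once more reproduces exactly the stated constant $C^{i,p}_X$, namely player $i$'s data $a_i$, the averaged coupling $\tfrac{L^b_y}{N}\sum_k a_k$, and the common factor $e^{c_p(L^b+L^b_y+1)T}$.

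I expect the main obstacle to be the bookkeeping that forces the mean-field coupling to appear in the averaged form $\tfrac{L^b_y}{N}\sum_k(\cdots)$ rather than in a form that grows with $N$; this is exactly what the two-stage argument (bound the aggregate $S$ first, then each $\Psi_i$) secures, and it relies essentially on the $1/N$ derivative bounds of $\mathscr{F}^{0,2}$. The It\^o/localization details and the $p<2$ mollification are routine.
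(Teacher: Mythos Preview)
Your proposal is correct and follows essentially the same route as the paper: apply It\^o's formula to $|X^{\bu}_{t,i}|^p$, use the linear-growth bound on $b_i$ coming from the $\mathscr{F}^{0,2}$ derivative estimates, reduce via Young's inequality to a coupled system of integral inequalities, and close it by the same two-stage Gr\"onwall argument (first sum over $i$ to control the aggregate, then substitute back to bound each $\Psi_i$ individually). The only cosmetic differences are that the paper dispenses with localization by citing a martingale criterion, and does not discuss the case $p<2$ (implicitly taking $p\ge 2$); your added remarks on localization and mollification are harmless refinements.
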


\begin{prop}
  \label{prop:Y_h_i_moment_open_loop}
Suppose Assumption \ref{assum:regularity} holds and let  $p \geq 2$.
For all $\bu \in \cH^p(\sR^N)$,
$h\in { [N]}$ and 
$ u'_h  \in    \mathcal{H}^{p}(\sR) $,
the solution $\bY^{\bu, u'_h} \in \cH^p(\sR^N)$  of  \eqref{eq:Y_h_i_open} satisfies
for all $i\in { [N]}$,
\begin{align*}
 \begin{split}
\sup_{t\in [0,T]}  \sE[   |Y^{\bu,u'_h}_{t,i}|^p]
 &\le   \left(\delta_{h,i} C^{h,p}_Y
 +\frac{ ( L^{b}_y)^p}{N^p}\bar{C}^{h,p}_Y \right)\| u_h'\|^p_{\mathcal{H}^p (\mathbb{R})},
  \end{split}
 \end{align*}
where
$C^{h,p}_Y  \coloneqq     (2 T)^{p-1}   e^{p L^{b} T}$ and $
 \bar{C}^{h,p}_Y  \coloneqq 
 (2 T)^{2p-1} 
  e^{  p(      L^{b} +      L^{b}_y) T}    e^{p L^{b} T}.$
\end{prop}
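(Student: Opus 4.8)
The plan is to exploit the fact that, for the state dynamics \eqref{eq:X_i_u_i_open}, the diffusion coefficient $\sigma_i$ is independent of state and control, so the sensitivity equation \eqref{eq:Y_h_i_open} is a \emph{pathwise} linear ODE whose forcing enters through the single component $h$ (the source is $\delta_{h,i}u'_{t,h}$), with random but uniformly bounded coefficients: since $b_i\in\mathscr{F}^{0,2}$ one has $|(\partial_x b_i)|\le L^{b}$ and $|(\partial_{y_j}b_i)|\le L^{b}_y/N$. First I would argue pathwise, bounding $|Y^h_{t,i}|$ in terms of $\int_0^T|u'_{s,h}|\,\d s$ by Gr\"onwall-type arguments (all coefficient bounds being deterministic and uniform in $\omega$), and only take $p$-th moments at the very end. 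The essential structural point is that the forcing hits a single coordinate while the inter-coordinate coupling carries the small factor $1/N$; the two terms of the asserted bound, the diagonal $\delta_{h,i}C^{h,p}_Y$ and the coupling $\frac{(L^{b}_y)^p}{N^p}\bar C^{h,p}_Y$, reflect exactly this dichotomy.

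The computation proceeds in two Gr\"onwall steps. Taking absolute values in the integral form of \eqref{eq:Y_h_i_open} gives, pathwise, $|Y^h_{t,i}|\le\int_0^t\big(L^{b}|Y^h_{s,i}|+\tfrac{L^{b}_y}{N}\sum_{j}|Y^h_{s,j}|+\delta_{h,i}|u'_{s,h}|\big)\,\d s$. Summing over $i\in I_N$ and writing $S_t\coloneqq\sum_{j}|Y^h_{t,j}|$, the $N$ coupling terms combine into $N\cdot\tfrac{L^{b}_y}{N}S_s=L^{b}_y S_s$, an $N$-independent rate, so Gr\"onwall yields $\sup_{t}S_t\le e^{(L^{b}+L^{b}_y)T}\int_0^T|u'_{s,h}|\,\d s$. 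Feeding this back: for $i\ne h$ there is no direct forcing, so treating $\tfrac{L^{b}_y}{N}S_s$ as the source and applying Gr\"onwall a second time leaves the single surviving factor $\tfrac{L^{b}_y}{N}$, namely $|Y^h_{t,i}|\le \tfrac{L^{b}_y}{N}\,T\,e^{L^{b}T}e^{(L^{b}+L^{b}_y)T}\int_0^T|u'_{s,h}|\,\d s$; for $i=h$ one keeps the direct forcing term to recover the $O(1)$ diagonal contribution. Finally, taking $p$-th moments and using H\"older ($\int_0^T|u'_{s,h}|\,\d s\le T^{1-1/p}\|u'_h\|_{\cH^p(\sR)}$) together with $(a+b)^p\le 2^{p-1}(a^p+b^p)$ reproduces the constants $C^{h,p}_Y=(2T)^{p-1}e^{pL^{b}T}$ and $\bar C^{h,p}_Y=(2T)^{2p-1}e^{p(L^{b}+L^{b}_y)T}e^{pL^{b}T}$, the factors $(2T)^{\bullet}$ absorbing all the slack from these elementary inequalities.

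The main obstacle is getting the $N$-dependence right, rather than merely proving an $N$-free bound: a naive coordinatewise Gr\"onwall would estimate $\sum_j|Y^h_{s,j}|$ by $N\sup_j|Y^h_{s,j}|$ and destroy the gain, while the operator norm of the coupling matrix is only $O(1)$, not $O(1/N)$. The two-step device is precisely what separates the aggregate behaviour (rate $L^{b}+L^{b}_y$, independent of $N$) from the individual off-diagonal coordinates, each of which retains exactly one explicit factor $1/N$. Existence and square/$p$-integrability of $\bY^{\bu,u'_h}$ follow from standard linear theory and are not the substance of the claim.
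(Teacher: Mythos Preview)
Your proposal is correct and follows essentially the same route as the paper: the paper abstracts your two-step Gr\"onwall argument (sum over $i$ to bound $\sum_j|Y^h_{t,j}|$ with rate $L^b+L^b_y$, then feed back into the $i$-th coordinate to retain a single $1/N$ factor, then take $p$-th moments via H\"older and $(a+b)^p\le 2^{p-1}(a^p+b^p)$) into a standalone Lemma~\ref{lemma:S_i_general}, applied with $B_i=\partial_x b_i$, $\bar B_{ij}=\partial_{y_j}b_i$, $f_{t,i}=\delta_{h,i}u'_{t,h}$, so that $\|B\|_\infty\le L^b$, $\|\bar B\|_\infty\le L^b_y/N$ and $N\|\bar B\|_\infty\le L^b_y$. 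Your pathwise bounds and constants agree with the lemma's output; the stated $\bar C^{h,p}_Y=(2T)^{2p-1}\cdots$ simply absorbs the slightly sharper $(2T)^{p-1}T^p$ that the computation actually produces.
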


 \begin{prop}
 \label{prop:Z_h_ell_i_moment_open} 
 Suppose Assumption \ref{assum:regularity} holds.
For all $\bu \in \cH^p(\sR^N)$,
$h,\ell \in { [N]}$ with $h\not =\ell$, and 
all 
$u'_h , u''_\ell\in   \cH^{2}(\sR)$,
the solution $\bZ^{\bu, u'_h, u''_\ell} \in \cH^p(\sR^N)$  of  \eqref{eq:Z_h_ell_i_open} satisfies
for all $i\in { [N]}$,
\begin{align*}
&\sup_{t\in[0,T]} \sE \left[   |Z^{\bu,u'_h,u''_\ell}_{t,i}|^2\right]
\leq 
C ( L^{b}_y)^2 \left((\delta_{h,i}  +\delta_{\ell,i} )\frac{1}{N^2}+\frac{1}{N^4}\right)
         \| u_h'\|_{\cH^4(\sR)}^2 \| u_\ell''\|_{\cH^4(\sR)}^2 ,
 \end{align*}
where $C\ge 0$ is a constant depending only on the upper bounds of $T$, $L^b, L_y^b$.
\end{prop}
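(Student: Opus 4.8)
The process $\bZ^{\bu,u'_h,u''_\ell}$ solves the linear (pathwise, random–coefficient) ODE system \eqref{eq:Z_h_ell_i_open}, whose only inhomogeneity is the forcing $\mathfrak f^{\bu,u'_h,u''_\ell}_{t,i}$ in \eqref{eq:f_phi_h_ell_open_loop}, a bilinear form in the first–order sensitivities $\bY^{\bu,u'_h},\bY^{\bu,u''_\ell}$ weighted by the second derivatives of $b_i$. The plan is therefore: (i) estimate the forcing in $L^2(\sP)$, carefully tracking its dependence on $N$ and on whether the component index $i$ coincides with $h$ or $\ell$; then (ii) propagate this estimate through the linear dynamics. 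Because $\mathfrak f$ is a product of two sensitivities, controlling it in $L^2$ requires fourth moments of each factor, which is exactly why the bound is stated in the $\cH^4$–norm, and I will invoke Proposition \ref{prop:Y_h_i_moment_open_loop} with $p=4$.

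For step (i), I would expand $\mathfrak f^{\bu,u'_h,u''_\ell}_{t,i}$ into its four blocks (the $\partial^2_{xx}$, $\partial^2_{xy}$, $\partial^2_{yx}$ and $\partial^2_{yy}$ contributions) and bound each product by Cauchy--Schwarz, $\sE[|YY'|^2]^{1/2}\le (\sE[|Y|^4])^{1/4}(\sE[|Y'|^4])^{1/4}$. The $\mathscr F^{0,2}$ bounds supply the coefficient decay: $|\partial^2_{xx}b_i|\le L^b$, $|\partial^2_{xy_k}b_i|\le L^b_y/N$, and the crucial split $|\partial^2_{y_jy_k}b_i|\le \tfrac{1}{N}L^b_y\mathds{1}_{j=k}+\tfrac{1}{N^2}L^b_y\mathds{1}_{j\ne k}$. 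Proposition \ref{prop:Y_h_i_moment_open_loop} with $p=4$ gives that $(\sE[|Y^{\bu,u'_h}_{t,j}|^4])^{1/4}$ is $O(1)$ when $j=h$ and $O(L^b_y/N)$ otherwise, so the key summation $\sum_{j} (\sE[|Y^{\bu,u'_h}_{t,j}|^4])^{1/4}=O(1+L^b_y)$ (one large term plus $N$ terms of size $1/N$), and likewise for $u''_\ell$. Combining these, and using $h\ne\ell$ so that $i$ matches at most one of them, I expect $\|\mathfrak f^{\bu,u'_h,u''_\ell}_{s,i}\|_{L^2}\le C L^b_y\big(\sqrt{\delta_{h,i}+\delta_{\ell,i}}/N+1/N^2\big)\|u'_h\|_{\cH^4(\sR)}\|u''_\ell\|_{\cH^4(\sR)}$, with higher powers of $L^b_y$ absorbed into $C$.

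For step (ii), write $z_i(t):=(\sE[|Z^{\bu,u'_h,u''_\ell}_{t,i}|^2])^{1/2}$ and $\varphi_i(s):=\|\mathfrak f^{\bu,u'_h,u''_\ell}_{s,i}\|_{L^2}$. The integral form of \eqref{eq:Z_h_ell_i_open} with Minkowski's inequality and $|\partial_x b_i|\le L^b$, $|\partial_{y_j}b_i|\le L^b_y/N$ yields $z_i(t)\le \int_0^t\big(L^b z_i(s)+\tfrac{L^b_y}{N}\sum_{j}z_j(s)+\varphi_i(s)\big)\,\d s$. Summing over $i$ collapses the coupling to $L^b_y\sum_j z_j$, so $\Sigma:=\sum_i z_i$ obeys $\Sigma(t)\le\int_0^t\big((L^b+L^b_y)\Sigma(s)+\sum_i\varphi_i(s)\big)\,\d s$; since $\sum_i\varphi_i=O(L^b_y/N)$ (two terms of size $1/N$ and $N{-}2$ of size $1/N^2$), Grönwall gives $\Sigma(t)=O(L^b_y/N)$. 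Feeding this back, the coupling in the per–component inequality is controlled by $\tfrac{L^b_y}{N}\Sigma(s)=O((L^b_y)^2/N^2)$, and a scalar Grönwall in $z_i$ then gives $z_i(t)\le C\big(\int_0^T\varphi_i+(L^b_y)^2/N^2\big)$, which is $O(L^b_y/N)$ for $i\in\{h,\ell\}$ and of order $1/N^2$ for $i\notin\{h,\ell\}$ (extra powers of $L^b_y$ absorbed into $C$); squaring yields $\sE[|Z_{t,i}|^2]$ of order $(L^b_y)^2/N^2$, respectively $1/N^4$, as claimed, with $C$ depending only on $T,L^b,L^b_y$.

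The main obstacle is precisely the mean–field coupling $\sum_j(\partial_{y_j}b_i)Z_{t,j}$: read componentwise it appears to inject the $O(L^b_y/N)$ magnitude of the two ``large'' components $Z_h,Z_\ell$ into every other component, seemingly destroying the $1/N^2$ decay of $z_i$ for $i\notin\{h,\ell\}$. The two–stage Grönwall resolves this, since one first controls the aggregate $\Sigma$, where the averaging $\tfrac1N\sum_j$ genuinely gains a factor $1/N$, and only afterwards bounds individual components, so the coupling enters each $z_i$ merely at order $(L^b_y)^2/N^2$. The remaining work is bookkeeping: correctly separating the $O(1)$ and $O(1/N)$ sensitivity components and preserving the diagonal/off–diagonal split in $\partial^2_{y_jy_k}b_i$, which is where the powers of $N$ are won.
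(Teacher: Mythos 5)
Your proposal is correct and follows essentially the same route as the paper: the paper isolates your step (i) as Lemma \ref{lemma: bound_f_open_loop} (bounding $\|\mathfrak f^{\bu,u'_h,u''_\ell}_i\|_{\cH^2(\sR)}$ via fourth moments of $\bY$ from Proposition \ref{prop:Y_h_i_moment_open_loop} with $p=4$, Cauchy--Schwarz, and exactly the diagonal/off-diagonal bookkeeping you describe), and isolates your step (ii) as the general Lemma \ref{lemma:S_i_general}, whose proof is precisely your two-stage Gr\"onwall argument (aggregate over components first, then per component). The only difference is cosmetic: the paper runs the Gr\"onwall pathwise on the random ODE and takes moments at the end, whereas you run it directly on the $L^2$-norms $z_i(t)$ via Minkowski's integral inequality; both are valid since \eqref{eq:Z_h_ell_i_open} has no martingale term.
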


 We now prove Theorem \ref{thm:value_jacobian_open} based on Propositions \ref{prop:X_i_moment_open}, \ref{prop:Y_h_i_moment_open_loop}
and \ref{prop:Z_h_ell_i_moment_open},

\begin{proof}[Proof of Theorem \ref{thm:value_jacobian_open}]
To simplify the notation,
we omit the dependence on $\bu$ in the superscript of all processes, i.e.,
  $\bX=\bX^\bu, \bY^i =\bY^{\bu, u_i'}$. We denote by    $C\ge 0$ a generic constant 
 depending only on the upper bounds of   $T$, 
$\max_{i\in { [N]}}  |x_i|^2$, $\max_{i\in { [N]}} \|\sigma_i\|_{L^2}$, 
$L^b, L_y^b$, $\max_{k\in { [N]}} \| u_k \|_{\mathcal{H}^2(\sR)}$.

By the definition of $\frac{\delta^2 V_j}{\delta u_j\delta u_i}(\bu;u''_j,u'_i)$ in \eqref{eq:2d_linear_derivative_open}
and the fact that $ \bZ^{\bu,u'_i,u''_j}=\bZ^{\bu,u''_j,u'_i}$,
\begin{align}
\label{eq:V_jacobian_difference_open_loop}
\begin{split}
&\left|\frac{\delta^2 V_i}{\delta u_i\delta u_j}(\bu;u'_i,u''_j)
-\frac{\delta^2 V_j}{\delta u_j\delta u_i}(\bu;u''_j,u'_i)
\right|
\\
&
=
\sE\left[ \int_0^T 
\left\{  \begin{pmatrix}
  \bY^{i}_t
  \\
   u_{t,i}'
\end{pmatrix}^\top
\begin{pmatrix}
 \partial^2_{xx} \Delta^f_{i,j }      & \partial^2_{xu_j} \Delta^f_{i,j } 
\\
\partial^2_{u_i x} \Delta^f_{i,j }    & \partial^2_{u_i u_j} \Delta^f_{i,j } 
\end{pmatrix}(t,\cdot)
 \begin{pmatrix}
  \bY^{ j}_t
  \\
  u''_{t,j}
\end{pmatrix}
+  \begin{pmatrix}
  \bZ^{i,j}_t
\end{pmatrix}^\top
\begin{pmatrix}
 \partial_{x} \Delta^f_{i,j }  
\end{pmatrix}(t, \cdot)
\right\} \d t\right]
\\
&\quad +
\sE\left[ 
 ( \bY^{i}_T)^\top  (\partial^2_{xx} \Delta^g_{i,j }  ) (\bX_T)\bY^{j}_T
 + (\bZ^{i,j}_T)^\top ( \partial_{x} \Delta^g_{i,j } )(\bX_T)
 \right],
 \end{split}
\end{align}
where we write for simplicity   $ \partial^2_{xx} \Delta^f_{i,j }  (t, \cdot) =  \partial^2_{xx} (f_i-f_j) (t,\bX_t, \bu_t)
$ and similarly for other derivatives. 
In the sequel, we 
derive upper bounds for all terms on the right-hand side of   \eqref{eq:V_jacobian_difference_open_loop}.
To estimate the term involving the Hessian of $\Delta^f_{i,j}$ in \eqref{eq:V_jacobian_difference_open_loop}, observe that for all $t\in [0,T]$, 
\begin{align}
\label{eq:f_qudratic_Yv_open_loop}
\begin{split}
   &\begin{pmatrix}
  \bY^{i}_t
  \\
   u_{t,i}'
\end{pmatrix}^\top
\begin{pmatrix}
 \partial^2_{xx} \Delta^f_{i,j }      & \partial^2_{xu} \Delta^f_{i,j } 
\\
\partial^2_{u_i x} \Delta^f_{i,j }    & \partial^2_{u_i u_j} \Delta^f_{i,j } 
\end{pmatrix}(t,\cdot)
 \begin{pmatrix}
  \bY^{ j}_t
  \\
  u''_{t,j}
\end{pmatrix}
=\sum_{h,\ell=1}^N  (\partial^2_{x_hx_\ell } \Delta^f_{i,j })  (t, \cdot)   Y^{i}_{t, h} Y^{ j}_{t, \ell} + u_{t,j}''  \sum_{h}^N  (\partial^2_{x_h u_j } \Delta^f_{i,j }) (t, \cdot)     Y^{i}_{t, h} 
\\
& 
\quad+  u_{t,i}'  \sum_{\ell=1}^N (\partial^2_{u_i x_\ell  } \Delta^f_{i,j }) (t, \cdot) Y^{ j}_{t, \ell}   
+  (\partial^2_{u_i u_j } \Delta^f_{i,j })  (t, \cdot) u_{t,i}'  u_{t,j}''. 
\end{split}
\end{align}

The first term on the right-hand side of \eqref{eq:f_qudratic_Yv_open_loop} satisfies the identity:
\begin{align*}
\begin{split}
 \sum_{h,\ell=1}^N & (\partial^2_{x_hx_\ell } \Delta^f_{i,j })  (t, \cdot)   Y^{i}_{t, h} Y^{ j}_{t, \ell} 
=  ( \partial^2_{x_ix_j } \Delta^f_{i,j } ) (t, \cdot)   Y^{i}_{t, i } Y^{ j}_{t, j} 
+\sum_{ \ell\in { [N]}\setminus\{j\}}  (\partial^2_{x_i x_\ell } \Delta^f_{i,j })  (t, \cdot)   Y^{i}_{t, i} Y^{ j}_{t, \ell} 
\\
&\quad 
+\sum_{ h\in { [N]}\setminus\{i\}}
 (\partial^2_{x_ h x_j } \Delta^f_{i,j })  (t, \cdot)   Y^{i}_{t, h} Y^{ j}_{t, j} 
+\sum_{  h\in { [N]}\setminus\{i\}, \ell\in { [N]}\setminus\{j\}}  (\partial^2_{x_h x_\ell } \Delta^f_{i,j } ) (t, \cdot)   Y^{i}_{t, h} Y^{ j}_{t, \ell}  ,
\end{split}
\end{align*}
which 
yields the following estimate:  
\begin{align}
&\Bigg| \sE\Bigg[\int_0^T  \sum_{h,\ell=1}^N ( \partial^2_{x_hx_\ell } \Delta^f_{i,j } ) (t, \cdot)   Y^{i}_{t, h} Y^{ j}_{t, \ell} \d t\Bigg]\Bigg|
\le \|   \partial^2_{x_ix_j } \Delta^f_{i,j }  \|_{L^\infty} \|Y^{i}_{i} Y^{ j}_{j}\|_{\cH^1(\sR)} 
+\sum_{ \ell\in { [N]}\setminus\{j\}} \|  \partial^2_{x_i x_\ell }  \Delta^f_{i,j }  \|_{L^\infty}  
\| Y^{i}_{i} Y^{ j}_{\ell}\|_{\cH^1(\sR)} 
\notag \\
&\quad 
+\sum_{ h\in { [N]}\setminus\{i\}}
 \| \partial^2_{x_ h x_j }  \Delta^f_{i,j }  \|_{L^\infty}  \|  Y^{i}_{h} Y^{ j}_{j}  \|_{\cH^1(\sR)} 
+\sum_{  h\in { [N]}\setminus\{i\}, \ell\in { [N]}\setminus\{j\}}  \| \partial^2_{x_h x_\ell }  \Delta^f_{i,j }  \|_{L^\infty}      \| Y^{i}_{h} Y^{ j}_{\ell}\|_{\cH^1(\sR)}  \notag\\ 
& \le
  C\| u_i'\|_{\cH^2(\sR)} \| u_j''\|_{\cH^2(\sR)}  \Bigg\{ \|   \partial^2_{x_ix_j } \Delta^f_{i,j }  \|_{L^\infty}  
+\frac{L^{b}_y }{N}
 \Bigg(\sum_{ \ell\in { [N]}\setminus\{j\}} \|  \partial^2_{x_i x_\ell }  \Delta^f_{i,j }  \|_{L^\infty} 
 +\sum_{ h\in { [N]}\setminus\{i\}} \| \partial^2_{x_ h x_j }  \Delta^f_{i,j }  \|_{L^\infty}
 \Bigg)
 \notag\\
 &\quad  + \frac{(L^{b}_y)^2 }{N^2} \Bigg(
\sum_{ h\in { [N]}\setminus\{i\}} \sum_{ \ell\in { [N]}\setminus\{j\}}  \| \partial^2_{x_h x_\ell }  \Delta^f_{i,j }  \|_{L^\infty} \Bigg)
\Bigg\}.\label{eq:cost_f_xx_open_loop}
\end{align}
where the second inequality follows from the Cauchy-Schwarz inequality and  Proposition \ref{prop:Y_h_i_moment_open_loop}.
Similarly, using  Propositions \ref{prop:Y_h_i_moment_open_loop}, 
the second and third terms  in \eqref{eq:f_qudratic_Yv_open_loop} can be bounded by 
\begin{align}
\label{eq:cost_f_xu_open_loop}
\begin{split}
&\Bigg| \sE\Bigg[\int_0^T u_{t,j}'' \sum_{h=1}^N  (\partial^2_{x_h u_j } \Delta^f_{i,j }) (t, \cdot)     Y^{i}_{t, h}
 \d t\Bigg]\Bigg|
 +\Bigg| \sE\Bigg[\int_0^T  u_{t,i}'  \sum_{\ell=1}^N (\partial^2_{u_i x_\ell  } \Delta^f_{i,j }) (t, \cdot) Y^{ j}_{t, \ell}   
 \d t\Bigg]\Bigg|
\\
\le  &  C\| u_i'\|_{\cH^2(\sR)} \| u_j''\|_{\cH^2(\sR)} \bigg\{ 
    \| \partial_{x_i u_j}^2 \Delta_{i,j}^f\|_{L^\infty} + \| \partial_{u_i x_j}^2 \Delta_{i,j}^f\|_{L^\infty} 
+\frac{L_y^b}{N} \big( \sum_{h\in { [N]} \setminus\{i\}} \| \partial^2_{x_h u_j} \Delta_{i,j}^f \|_{L^\infty}  
 +\sum_{\ell \in { [N]} \setminus\{j\}} \| \partial_{ u_i x_\ell} \Delta_{i,j}^f \|_{L^\infty} 
 \big) 
 \bigg\}
\end{split}
\end{align}
and the fourth term in \eqref{eq:f_qudratic_Yv_open_loop} can be bounded by
\begin{align}\label{eq:cost_f_uu_open_loop}
    \left|\E \left[ \int_0^T (\partial^2_{u_i u_j } \Delta^f_{i,j })  (t, \cdot) u_{t,i}'  u_{t,j}''  \d t \right]\right| \leq \| u_i'\|_{\cH^2(\sR)} \| u_j''\|_{\cH^2(\sR)} \|\partial^2_{u_i u_j } \Delta^f_{i,j } \|_{L^\infty}.
\end{align}
Combining \eqref{eq:cost_f_xx_open_loop}, \eqref{eq:cost_f_xu_open_loop}, and \eqref{eq:cost_f_uu_open_loop} yield the following bound of \eqref{eq:f_qudratic_Yv_open_loop}:
\begin{align}
\label{eq:f_qudratic_Yv_bound_open_loop}
\begin{split}
& \Bigg| \sE\Bigg[\int_0^T  \begin{pmatrix}
  \bY^{i}_t
  \\
  u'_{t,i} 
\end{pmatrix}^\top
\begin{pmatrix}
 \partial^2_{xx} \Delta^f_{i,j }      & \partial^2_{xu_j} \Delta^f_{i,j } 
\\
\partial^2_{u_ix} \Delta^f_{i,j }    & \partial^2_{u_iu_j} \Delta^f_{i,j } 
\end{pmatrix}(t, \cdot)
 \begin{pmatrix}
  \bY^{ j}_t
  \\
  u''_{t,j} 
\end{pmatrix}\d t\Bigg]\Bigg|
\\
&\le C \| u_i'\|_{\cH^2(\sR)} \| u_j''\|_{\cH^2(\sR)} \Bigg\{ 
\|   \partial^2_{x_ix_j } \Delta^f_{i,j }  \|_{L^\infty}  + \|   \partial^2_{x_i u_j } \Delta^f_{i,j }  \|_{L^\infty}  + \|   \partial^2_{u_i x_j } \Delta^f_{i,j }  \|_{L^\infty} + \|   \partial^2_{u_i u_j } \Delta^f_{i,j }  \|_{L^\infty}
\\
&
\quad +
\frac{L_y^b}{N} \Bigg(\sum_{ \ell\in { [N]}\setminus\{j\}} \big(\|  \partial^2_{x_i x_\ell }  \Delta^f_{i,j }  \|_{L^\infty}  
+
  \| \partial^2_{ u_i x_\ell} \Delta_{i,j}^f \|_{L^\infty} \big)
 +
 \sum_{ h\in { [N]}\setminus\{i\}} \big(\| \partial^2_{x_ h x_j }  \Delta^f_{i,j }  \|_{L^\infty}
 + 
\| \partial^2_{x_h u_j} \Delta_{i,j}^f \|_{L^\infty}  \big)
 \Bigg)
 \\
 &
 \quad + \frac{(L_y^b)^2}{N^2} \Bigg( \sum_{ h\in { [N]}\setminus\{i\}, \ell\in { [N]}\setminus\{j\}}  \| \partial^2_{x_h x_\ell }  \Delta^f_{i,j }  \|_{L^\infty}
 \Bigg)
\Bigg\}.
\end{split}
\end{align}

To estimate the term involving the gradient of $\Delta^f_{i,j}$ in \eqref{eq:V_jacobian_difference_open_loop}, observe that for all $t\in[0,T],$
 $\begin{pmatrix}
  \bZ^{i,j}_t
\end{pmatrix}^\top
\begin{pmatrix}
 \partial_{x} \Delta^f_{i,j }  
\end{pmatrix}(t, \cdot)
=\sum_{h=1}^N ( \partial_{x_h} \Delta^f_{i,j } ) (t, \cdot)  Z^{i,j}_{t,h}.$
The fundamental theorem of calculus   implies that  for all $(t,x,u)=(t,(x_\ell)_{\ell = 1}^N, (u_\ell)_{\ell = 1}^N)\in [0,T]\in \sR^N\times\sR^N$ and $h\in { [N]}$,
$
|( \partial_{x_h} \Delta^f_{i,j } ) (t, x, u) |\le |( \partial_{x_h} \Delta^f_{i,j } ) (t, 0, 0) |
+\sum_{\ell=1}^N  (\|  \partial^2_{x_hx_\ell } \Delta^f_{i,j } \|_{L^\infty} |x_\ell|
+\|  \partial^2_{x_h u_\ell } \Delta^f_{i,j } \|_{L^\infty} |u_\ell|),
$
which implies that 
\begin{align*}
\begin{split}
\Bigg| \sE&\Bigg[\int_0^T \sum_{h=1}^N ( \partial_{x_h} \Delta^f_{i,j } ) (t, \cdot)  Z^{i,j}_{t,h}  \d t\Bigg]\Bigg|
\le \sum_{h\in \{i,j\} }  \Big(  
\|( \partial_{x_h} \Delta^f_{i,j } ) (\cdot, 0, 0) \|_{L^2}
\| Z^{i,j}_{h}\|_{\cH^2(\sR)} 
\\
+ &\sum_{\ell=1}^N  
 (\|  \partial^2_{x_hx_\ell } \Delta^f_{i,j } \|_{L^\infty} \|X_{\ell}   Z^{i,j}_{h}\|_{\cH^1(\sR)} 
+\|  \partial^2_{x_h u_\ell } \Delta^f_{i,j } \|_{L^\infty}\|u_{\ell}   Z^{i,j}_{h}\|_{\cH^1(\sR)} 
)
\Big) 
\\
+ &\sum_{h\in { [N]}\setminus \{i,j\} }  \Big(  
\|( \partial_{x_h} \Delta^f_{i,j } ) (\cdot, 0, 0) \|_{L^2}
\| Z^{i,j}_{h}\|_{\cH^2(\sR)} 
+ \sum_{\ell=1}^N  
 (\|  \partial^2_{x_hx_\ell } \Delta^f_{i,j } \|_{L^\infty} \|X_{\ell}   Z^{i,j}_{h}\|_{\cH^1(\sR)} 
+\|  \partial^2_{x_h u_\ell } \Delta^f_{i,j } \|_{L^\infty}\|u_{\ell}   Z^{i,j}_{h}\|_{\cH^1(\sR)} 
)
\Big). 
\end{split}
\end{align*}
Then by  the Cauchy-Schwarz inequality and 
 Propositions \ref{prop:X_i_moment_open} and \ref{prop:Z_h_ell_i_moment_open},
 \begin{align}
 \label{eq:cost_f_x_Z_open_loop}
\begin{split}
&\Bigg| \sE\Bigg[\int_0^T \sum_{h=1}^N ( \partial_{x_h} \Delta^f_{i,j } ) (t, \cdot)  Z^{i,j}_{t,h}  \d t\Bigg]\Bigg|
\\
&\le
C\| u_i'\|_{\cH^4(\sR)} \| u_j''\|_{\cH^4(\sR)}L_y^b
\Bigg\{ \frac{ 1}{N}  \sum_{h\in \{i,j\} }  \Bigg(  
\|( \partial_{x_h} \Delta^f_{i,j } ) (\cdot, 0, 0) \|_{L^2}
+
 \sum_{\ell=1}^N  
 \left(\|  \partial^2_{x_hx_\ell } \Delta^f_{i,j } \|_{L^\infty} 
+\|  \partial^2_{x_h u_\ell } \Delta^f_{i,j } \|_{L^\infty} 
\right)
\Bigg)
\\
&\quad + \frac{1}{N^2} \sum_{h\in { [N]}\setminus \{i,j\} }  \Bigg(  
\|( \partial_{x_h} \Delta^f_{i,j } ) (\cdot, 0, 0) \|_{L^2}
+ \sum_{\ell=1}^N  
 \left(\|  \partial^2_{x_hx_\ell } \Delta^f_{i,j } \|_{L^\infty} 
+\|  \partial^2_{x_h u_\ell } \Delta^f_{i,j } \|_{L^\infty} 
\right)\Bigg) 
\Bigg\}. 
\end{split}
\end{align}

Finally,  using similar arguments as those for  \eqref{eq:cost_f_xx_open_loop} and \eqref{eq:cost_f_x_Z_open_loop} allows for estimating 
  the terms involving  $\Delta^g_{i,j}$ in \eqref{eq:V_jacobian_difference_open_loop}: 
\begin{align}
\label{eq:g_bound_open_loop}
\begin{split}
& \left|\sE\left[ 
 ( \bY^{i}_T)^\top  (\partial^2_{xx} \Delta^g_{i,j }  ) (\bX_T)\bY^{j}_T
 + (\bZ^{i,j}_T)^\top ( \partial_{x} \Delta^g_{i,j } )(\bX_T)
 \right]\right|
 \\
 &\le 
 C \| u_i'\|_{\cH^4(\sR)} \| u_j''\|_{\cH^4(\sR)}\Bigg\{ \|   \partial^2_{x_ix_j } \Delta^g_{i,j }  \|_{L^\infty}  
+ \frac{L_y^b}{N}
 \Bigg(
\sum_{h\in \{i,j\} }  
|( \partial_{x_h} \Delta^g_{i,j } ) ( 0)| 
+
 \sum_{h\in \{i,j\}, \ell\in { [N]} } 
 \|  \partial^2_{x_hx_\ell } \Delta^g_{i,j } \|_{L^\infty} 
  \Bigg)
\\
&
\quad+\frac{L_y^b}{N^2}\Bigg(
\sum_{h\in { [N]}\setminus \{i,j\} } | (\partial_{x_h} \Delta^g_{i,j } ) ( 0)|  
+ \sum_{h\in { [N]}\setminus \{i,j\} ,\ell \in { [N]}}  
 \|  \partial^2_{x_hx_\ell } \Delta^g_{i,j } \|_{L^\infty} 
+\sum_{ h\in { [N]}\setminus\{i\},  \ell\in { [N]}\setminus\{j\}}  \| \partial^2_{x_h x_\ell }  \Delta^g_{i,j }  \|_{L^\infty}  \Bigg)
\Bigg\}.
\end{split}
\end{align}
Note that the last two terms in the last line can be replaced by  $\sum_{h\in { [N]}\setminus \{i,j\},\ell \in { [N]}\setminus\{i,j\}}  
 \|  \partial^2_{x_hx_\ell } \Delta^g_{i,j } \|_{L^\infty} $,
 as the remaining ones can be absorbed in the terms with $1/N$. 
Consequently, using \eqref{eq:V_jacobian_difference_open_loop} and 
grouping the terms  in  
  the estimates  \eqref{eq:f_qudratic_Yv_bound_open_loop}, 
\eqref{eq:cost_f_x_Z_open_loop} and \eqref{eq:g_bound_open_loop} according to the orders   
$1/N$ and $1/N^2$ yield 
  \begin{equation*}
        \left|\frac{\delta^2 V_i}{\delta u_i \delta u_j}\left(\bu ; u_i^{\prime}, u_j^{\prime \prime}\right)-\frac{\delta^2 V_j}{\delta u_j \delta u_i}\left(\bu ; u_j^{\prime \prime}, u_i^{\prime}\right)\right|
        \leq C \| u_i'\|_{\cH^4(\sR)} \| u_j''\|_{\cH^4(\sR)}  \left(C_{V,1}^{i,j} + L_y^b 
        \left(
        \frac{ 1}{N}C_{V,2}^{i,j} + \frac{ 1}{N^2} C_{V,3}^{i,j}\right)\right),
    \end{equation*}
{ where $C_{V,1}^{i,j}, C_{V,2}^{i,j}$ and $C_{V,3}^{i,j}$ are given \eqref{eq:C_V_1_open}, \eqref{eq:C_V_2_open}, and \eqref{eq:C_V_3_open} respectively.}
This finishes the proof.
\end{proof}

 \subsection{Proof of Theorem \ref{thm:lq_verification}}
\label{sec:lq_verification}

 \begin{proof}[Proof of Theorem \ref{thm:lq_verification}]
  It suffices to show $\bu^*$ is a minimizer    of \eqref{eq:LQPotential} over $\cH^2(\sR^N)$.
         Define  $\hat{V}:[0,T]\times \cP_2(\cS)\to \sR$   such that for all $(t,\mu)\in [0,T]\times \cP_2 (\cS)$,
\begin{align*}
    \hat{V}(t, \mu) = 
     \tr (M_0(t)  \overline{\mu}_2)
    +   \begin{pmatrix}
    \overline{\mu} \\ \overline{\mu}_1
    \end{pmatrix}^\top M_1(t) \begin{pmatrix}
    \overline{\mu} \\ \overline{\mu}_1
    \end{pmatrix} 
    + 2 M_2(t)^\top \begin{pmatrix}
    \overline{\mu} \\ \overline{\mu}_1 
    \end{pmatrix}
    + M_3(t),
\end{align*}
where 
$\overline{\mu} \coloneqq \int_{\cS}  \mathbbm{x} \mu (\d (\mathbbm{x} ,r) ), $  $\overline\mu_1 \coloneqq  \int_{\cS}  r \mathbbm{x} \mu (\d (\mathbbm{x} ,r) ),$  $ \overline{\mu}_2 \coloneqq  \int_{\cS}  \mathbbm{x} \mx^\top \mu (\d (\mathbbm{x} ,r) )$,
and $M_3\in C([0,T];  \sR)$
satisfies 
$$
\dot{M}_3+  \operatorname{tr}\left(   
 \Sigma  \Sigma^\top 
 \left(M_0
+  
 \begin{pmatrix}
 \sI_{2N} \\ \frac{1}{2}\sI_{2N}
 \end{pmatrix}^\top
 M_1  \begin{pmatrix}
 \sI_{2N} \\ \frac{1}{2}\sI_{2N}
 \end{pmatrix}\right)
\right)
-(\tilde IM_2 )^\top\tilde IM_2=0; 
\quad 
M_3(T)=0.
$$
We shall  prove $\hat V $  satisfies the optimality condition \eqref{eq:MV_potential}. 
In the sequel, the time variable of all coefficients will be dropped when there is no
risk of confusion.

Let $\bu \in \cH^2(\sR^N)$,
let  
 $\sX^{  \mathfrak{r},\bu}\in \cS^2(\sR^{2N})$ satisfy \eqref{eq:LQ_lifted},
 and let $\mu_t^{\mathfrak{r},\bu} \coloneqq \cL(\sX_t^{\mathfrak{r}, \bu}, \mathfrak{r} | \cF_t)$ for all $t$.
 By 
It\^{o}'s formula   in \cite{guo2024ito} (see also \cite[Theorem 4.17]{carmona2018probabilisticII}), 
{\begin{align}
\label{eq:lq_ito}
\begin{split}
   \hat{V}(T, \mu_T^{\mathfrak{r},\bu}) - \hat{V}(0, \mu_0^{\mathfrak{r},\bu}) 
  &=\bar\sE\Bigg[\int_0^T \Bigg\{(\partial_t \hat V)(t,\mu_t^{\mathfrak{r},\bu})  
    +    \left(A(t) \tilde \sX_t^{\tilde{\mathfrak{r}} , \bu}+\cI_{\tilde{\mathfrak{r}} } \bu_t  \right)^\top  \partial_\sx \frac{\delta \hat V}{\delta\mu}(t, \mu_t^{\mathfrak{r},\bu}, \tilde \sX_t^{\tilde{\mathfrak{r}}, \bu}, \tilde{\mathfrak{r}} )
    \\
    &\quad + \frac{1}{2} \tr\left(\Sigma(t)\Sigma(t)^\top \partial^2_{\sx\sx} \frac{\delta \hat V}{\delta\mu}(t, \mu_t^{\mathfrak{r},\bu}, \tilde \sX_t^{\tilde{\mathfrak{r}}, \bu}, \tilde{\mathfrak{r}}) \right)
      \\
      &
   \quad  +  \frac{1}{2}  \tr\left(\Sigma(t)\Sigma(t)^\top \partial^2_{\sx\sx'} \frac{\delta^2 \hat V}{\delta^2 \mu}(t, \mu_t^{\mathfrak{r},\bu}, \tilde \sX_t^{\tilde{\mathfrak{r}}, \bu},   \tilde{\mathfrak{r}},
  \hat \sX_t^{\hat{\mathfrak{r}}, \bu} , \hat{\mathfrak{r}} ) \right)
   \Bigg\} \d t \,\Bigg\vert\,\cF_T
    \Bigg],
 \end{split}    
\end{align}}
where $(\tilde \sX^{\tilde{\mathfrak{r}}, \bu},   \tilde{\mathfrak{r}})$
and $(  \hat \sX^{\hat{\mathfrak{r}}, \bu} , \hat{\mathfrak{r}})$
are   conditional independent copies of $(  \sX^{\mathfrak{r}, \bu} , {\mathfrak{r}})$ given $\cF_T$ defined on an enlarged probability space $(\bar{\Omega},\bar{\cF},\bar{\sP})$
with $\cF_T\subset \bar{\cF}$,
and $\bar{\sE}[\cdot|\cF_T]$
is  the conditional
expectation  in the enlarged probability space.

We now compute   the right-hand side of \eqref{eq:lq_ito}. 
Note that $\bu $ and 
$\mu^{\mathfrak{r},\bu}$ are measurable 
with respect to $\cF_T$,
and 
$\mu^{\mathfrak{r},\bu}_t = \cL(\tilde \sX^{\tilde{\mathfrak{r}}, \bu},   \tilde{\mathfrak{r}}_t | \cF_T)=\cL(\hat \sX^{\hat{\mathfrak{r}}, \bu}_t,   \hat{\mathfrak{r}}| \cF_T) $ for all $t\in [0,T]$. Hence 
for all $t\in [0,T]$,  by the symmetry of $M_0(t)$ and $M_1(t)$,
\begin{equation}
\label{eq:lq_term1}
\begin{aligned}
  &   \bar{\sE}\left[\left(A(t) \tilde \sX_t^{\tilde{\mathfrak{r}} , \bu}+\cI_{\tilde{\mathfrak{r}} } \bu_t  \right)^\top  \partial_\sx \frac{\delta \hat V}{\delta\mu}(t, \mu_t^{\mathfrak{r},\bu}, \tilde \sX_t^{\tilde{\mathfrak{r}}, \bu}, \tilde{\mathfrak{r}} )\,\Bigg\vert\,\cF_T  \right]
  \\
  &=2\int_{\cS} \left(A(t) \sx + \cI_{ r} \bu_t    \right)^\top  \left(M_0(t) \mx +\begin{pmatrix} \mathbb{I}_{2N} \\ r\mathbb{I}_{2N} \end{pmatrix} ^\top M_1(t) \begin{pmatrix}
\overline{\mu_t^{\mathfrak{r},\bu}}
      \\
      (\overline{\mu_t^{\mathfrak{r},\bu}})_1
  \end{pmatrix}  +  \begin{pmatrix} \mathbb{I}_{2N} \\ r\mathbb{I}_{2N} \end{pmatrix} ^\top M_2(t)\right) \d   \mu_t^{\mathfrak{r},\bu}(\sx,r)
  \\
  &=
 2\Bigg\{ \tr\left(A^\top M_0 (\overline{\mu_t^{\mathfrak{r},\bu}})_2\right) + \begin{pmatrix}
\overline{\mu_t^{\mathfrak{r},\bu}}
      \\
      (\overline{\mu_t^{\mathfrak{r},\bu}})_1
  \end{pmatrix}^\top M_1 
  \begin{pmatrix}
       A & \mymathbb{0}_{2N} \\   \mymathbb{0}_{2N} &  A  \end{pmatrix} \begin{pmatrix}
\overline{\mu_t^{\mathfrak{r},\bu}}
      \\
      (\overline{\mu_t^{\mathfrak{r},\bu}})_1
  \end{pmatrix} + M_2^\top \begin{pmatrix}
       A & \mymathbb{0}_{2N} \\   \mymathbb{0}_{2N} &  A  \end{pmatrix} \begin{pmatrix}
\overline{\mu_t^{\mathfrak{r},\bu}}
      \\
      (\overline{\mu_t^{\mathfrak{r},\bu}})_1
  \end{pmatrix}
  \\
  &\quad +\bu_t^\top \left[  K_{M_0,M_1}
  \begin{pmatrix}
\overline{\mu_t^{\mathfrak{r},\bu}}
      \\
      (\overline{\mu_t^{\mathfrak{r},\bu}})_1
  \end{pmatrix}
  +
    \tilde I M_2\right]
 \Bigg\},
  \end{aligned}
\end{equation}
 where  the last term  used the fact that the marginal distribution of $\mu_t^{\mathfrak{r},\bu}$ on $[0,1]$ is the uniform distribution.
Moreover, 
\begin{align}
\label{eq:lq_term2}
\begin{split}
     \frac{1}{2} \tr\left(\Sigma(t)\Sigma(t)^\top \partial^2_{\sx\sx} \frac{\delta \hat V}{\delta\mu}(t, \mu_t^{\mathfrak{r},\bu}, \tilde \sX_t^{\tilde{\mathfrak{r}}, \bu}, \tilde{\mathfrak{r}}) \right)
     &=  \operatorname{tr}\left(   
 \Sigma(t) \Sigma(t)^\top M_0(t)
\right),
\\
 \frac{1}{2}  \tr\left(\Sigma(t)\Sigma(t)^\top \partial^2_{\sx\sx'} \frac{\delta^2 \hat V}{\delta^2 \mu}(t, \mu_t^{\mathfrak{r},\bu}, \tilde \sX_t^{\tilde{\mathfrak{r}}, \bu},   \tilde{\mathfrak{r}},
  \hat \sX_t^{\hat{\mathfrak{r}}, \bu} , \hat{\mathfrak{r}} ) \right)
  &=   \operatorname{tr}\left(   
 \Sigma(t) \Sigma(t)^\top 
 \begin{pmatrix}
 \sI_{2N} \\ \frac{1}{2}\sI_{2N}
 \end{pmatrix}^\top
 M_1(t) \begin{pmatrix}
 \sI_{2N} \\ \frac{1}{2}\sI_{2N}
 \end{pmatrix}
\right).
\end{split}
\end{align}
 Observe further  that for all $t\in [0,T]$,
 by completing the squares,
 \begin{align}
 \label{eq:lq_optimal}
 \begin{split}
   &  \bu_t^\top 2\left[  K_{M_0,M_1}
  \begin{pmatrix}
\overline{\mu_t^{\mathfrak{r},\bu}}
      \\
      (\overline{\mu_t^{\mathfrak{r},\bu}})_1
  \end{pmatrix}
  +
    \tilde I M_2\right]
   \\
   &  
    \ge  
   -   \bu_t^\top \bu_t   -
    \left[  K_{M_0,M_1}
  \begin{pmatrix}
\overline{\mu_t^{\mathfrak{r},\bu}}
      \\
      (\overline{\mu_t^{\mathfrak{r},\bu}})_1
  \end{pmatrix}
  +
    \tilde I M_2\right]^\top \left[  K_{M_0,M_1}
  \begin{pmatrix}
\overline{\mu_t^{\mathfrak{r},\bu}}
      \\
      (\overline{\mu_t^{\mathfrak{r},\bu}})_1
  \end{pmatrix}
  +
    \tilde I M_2\right]
    \\
    &
    \ge  
   - \int_\cS 2r \bu_t^\top \bu_t \d \mu_t^{\mathfrak{r},\bu} -
    \begin{pmatrix}
\overline{\mu_t^{\mathfrak{r},\bu}}
      \\
      (\overline{\mu_t^{\mathfrak{r},\bu}})_1
  \end{pmatrix}^\top K_{M_0,M_1}^\top K_{M_0,M_1} \begin{pmatrix}
\overline{\mu_t^{\mathfrak{r},\bu}}
      \\
      (\overline{\mu_t^{\mathfrak{r},\bu}})_1
  \end{pmatrix}
  \\
  &\quad 
  -2 (\tilde IM_2 )^\top 
  K_{M_0,M_1}
  \begin{pmatrix}
\overline{\mu_t^{\mathfrak{r},\bu}}
      \\
      (\overline{\mu_t^{\mathfrak{r},\bu}})_1
  \end{pmatrix}
  -(\tilde IM_2 )^\top\tilde IM_2 .
\end{split}
 \end{align}
 Hence   combining 
\eqref{eq:lq_ito},
\eqref{eq:lq_term1},
\eqref{eq:lq_term2},
 and \eqref{eq:lq_optimal}, and using the ODEs for   $M_0$,   $M_1$ and $M_2$ yield
\begin{align}
\label{eq:V_hat_condition}
\begin{split}
   \hat{V}(T, \mu_T^{\mathfrak{r},\bu}) - \hat{V}(0, \mu_0^{\mathfrak{r},\bu}) 
  &\ge  \int_0^T  
  - \int_\cS (\sx^\top Q\sx+2r \bu_t^\top \bu_t) \d \mu_t^{\mathfrak{r},\bu}
  \d t,
 \end{split}    
\end{align}
from which, by
using   
$ \hat{V}(T, \mu_T^{\mathfrak{r},\bu})= \int_{\cS}  \left(\mx^\top \bar{Q} \mathbbm{x} +2 \mathfrak p^\top \sx\right)
\d \mu_T^{\mathfrak{r},\bu} $
and taking the expectation, we obtain that  
$$
\hat V\left(0,\delta_{\vcat(x_1,\ldots, x_N, {0}_{N^2d})}\otimes \operatorname{Unif}(0,1)\right) \le \Phi(\bu ), \quad \forall \bu \in \cH^2(\sR^N).
$$
 
Finally, consider the feedback map 
$
\hat a(t,\mu)\coloneqq -\left[  K_{M_0,M_1}(t)
  \begin{pmatrix}
\overline{\mu}
      \\
      \overline{\mu}_1
  \end{pmatrix}
  +
    \tilde I M_2(t) \right]
     $ for all $(t,\mu)\in [0,T]\times \cP_2(\cS)$.
Since $\mathfrak r$ is fixed, 
by \cite[Theorem A.3]{djete2022mckean} and the boundedness of $K_{M_0,M_1}$ and $M_2$, the dynamics
\begin{equation}
\label{eq:lq_feedback}
 \d \sX_t = \left( A(t) \sX_t + \cI_{\mathfrak{r}} \hat a \big(t, \cL(\sX_{t}, \mathfrak r \mid \cF_t)\big) \right) \d t + \Sigma(t) \d W_t,\quad 
  \sX_0  = \vcat(x_1, \cdots, x_N, 0_N),
  \end{equation}
 admits a unique $\sG$-adapted strong solution $(\hat{\sX}, \mathfrak r) $ satisfying 
$\sE[\sup_{t\in [0,T]}||\hat\sX_{t}|^p] <\infty$ for any $p\ge 2$.
Thus the control $\bu^*_t =\hat a \big(t, \cL(\hat\sX_{t}, \mathfrak r \mid \cF_t)\big) $, $t\in [0,T]$,
 is in $\cA^{(N)}=\prod_{i\in { [N]}} \cA_i$,
 and achieves the equality in \eqref{eq:lq_optimal}.
This implies  \eqref{eq:V_hat_condition} is an equality and hence $\bu^*$ is the minimizer of $\Phi$. Since 
$\Phi$ is the $\alpha_N$-potential function of  $\cG_{\textrm{LQ}} $, 
$\bu^*$ is an $\alpha_N$-NE of 
 $\cG_{\textrm{LQ}} $ by Proposition \ref{prop:optimizer_and_NE}.  

To derive that dynamics of  $F_t \coloneqq  \begin{pmatrix}
 \sE [ {\sX}_t^{\mathfrak{r}, \bu^*}|\cF_t]    \\ 
 \sE [\mathfrak r  {\sX}_t^{\mathfrak{r}, \bu^*} |\cF_t]
  \end{pmatrix} $, $t\in [0,T]$, using \eqref{eq:lq_feedback},
 \begin{align*}
 \d  \begin{pmatrix}
  {\sX}_t^{\mathfrak{r}, \bu^*}     \\ 
 \mathfrak r  {\sX}_t^{\mathfrak{r}, \bu^*}
  \end{pmatrix}
  =  \left( \begin{pmatrix}
       A(t) & \mymathbb{0}_{2N} \\   \mymathbb{0}_{2N} &  A(t)  \end{pmatrix}
       \begin{pmatrix}
  {\sX}^{\mathfrak{r}, \bu^*}_t   \\ 
 \mathfrak r  {\sX}^{\mathfrak{r}, \bu^*}_t  
  \end{pmatrix}
  -\begin{pmatrix}
        \cI_{\mathfrak r} \\ \mathfrak r  \cI_{\mathfrak r} 
     \end{pmatrix}\left[  K_{M_0,M_1}(t)
  \begin{pmatrix}
 \sE[  {\sX}^{\mathfrak{r}, \bu^*}_t|\cF_t]   \\ 
\sE[  \mathfrak r  {\sX}^{\mathfrak{r}, \bu^*}_t  |\cF_t]
  \end{pmatrix}
  +
    \tilde I M_2(t) \right]
  \right)\d t + \begin{pmatrix}
        \Sigma(t) \\ \mathfrak{r} \Sigma(t)
    \end{pmatrix}\d W_t.
 \end{align*} 
 For each $t\in [0,T]$,
taking the conditional expectation with respect to $\cF_t$, applying the conditional Fubini Theorem 
and using the independence between $\mathfrak r$ and $\cF_t$
yield the dynamics \eqref{eq:F_dynamics} of $F$. This completes the proof.
  \end{proof}

\section{Proofs of Propositions \ref{prop:X_i_moment_open}, \ref{prop:Y_h_i_moment_open_loop}
and \ref{prop:Z_h_ell_i_moment_open}}
\label{sec:open_loop_moment_estimate}

The following lemma quantifies the growth of  $f\in \mathscr{F}^{0,2}([0,T]\times \sR\times \sR^{N};\sR)$ in the space variables,
 which will be used to prove Proposition \ref{prop:X_i_moment_open}.
 The proof follows directly from the mean value theorem and hence is omitted. 
 
 \begin{lemma}
\label{lemma:linear_growth}
Let $f\in \mathscr{F}^{0,2}([0,T]\times \sR\times \sR^{N};\sR)$. 
Then for all  $  t\in [0,T], x\in \sR$ and $y \in \sR^{N}$,
 $|f(t,x,y)| \le  L^f (1+|x|)+\frac{L^f_y}{N}\sum_{i=1}^N |y_i|$.

\end{lemma}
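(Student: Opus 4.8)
The plan is to reduce everything to the fundamental theorem of calculus applied along two axis-parallel segments, exploiting that the defining bounds of $\mathscr{F}^{0,2}$ control precisely the first-order partial derivatives $\partial_x f$ and $(\partial_{y_i} f)_{i\in I_N}$. First I would fix $t\in[0,T]$ (all bounds will be uniform in $t$) and decompose the value of $f$ at $(t,x,y)$ relative to the origin by inserting the intermediate point $(t,x,0)$:
\[
f(t,x,y)=f(t,0,0)+\big(f(t,x,0)-f(t,0,0)\big)+\big(f(t,x,y)-f(t,x,0)\big).
\]
The constant term $f(t,0,0)$ is bounded by $L^f$ directly from the definition of $\mathscr{F}^{0,2}$.

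Next I would treat the two increments separately. Since $(x,y)\mapsto f(t,x,y)$ is $C^2$, hence $C^1$, the $x$-increment admits the representation $f(t,x,0)-f(t,0,0)=\int_0^1 (\partial_x f)(t,sx,0)\,x\,\d s$, and the uniform bound $|\partial_x f|\le L^f$ yields $|f(t,x,0)-f(t,0,0)|\le L^f|x|$. For the $y$-increment I would similarly write
\[
f(t,x,y)-f(t,x,0)=\int_0^1 \sum_{i=1}^N (\partial_{y_i} f)(t,x,sy)\,y_i\,\d s,
\]
and apply the per-coordinate bound $|\partial_{y_i} f|\le L^f_y/N$, so that summing over $i\in I_N$ gives $|f(t,x,y)-f(t,x,0)|\le \frac{L^f_y}{N}\sum_{i=1}^N|y_i|$. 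Combining the three estimates by the triangle inequality produces
\[
|f(t,x,y)|\le L^f+L^f|x|+\frac{L^f_y}{N}\sum_{i=1}^N|y_i|=L^f(1+|x|)+\frac{L^f_y}{N}\sum_{i=1}^N|y_i|,
\]
which is exactly the claimed bound.

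There is no genuine obstacle here; the computation is routine, which is why the paper omits it. The only point requiring minor care is the bookkeeping of the $1/N$ factor: it must originate from the derivative bound $|\partial_{y_i} f|\le L^f_y/N$ attached to each coordinate, so that the $N$ summands combine into the average-type quantity $\frac{L^f_y}{N}\sum_{i=1}^N|y_i|$ rather than an $N$-fold blow-up. Using the exact coordinatewise integral representation of the $y$-increment (instead of a crude bound by $N\cdot\max_i|\partial_{y_i}f|\cdot\max_i|y_i|$) is what keeps this scaling sharp and is precisely the feature that makes the constant $L^f_y$ independent of $N$ in the mean-field examples of Remark \ref{rmk:mean_field}.
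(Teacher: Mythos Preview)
Your proof is correct and is exactly the argument the paper has in mind: the paper omits the proof, remarking only that it ``follows directly from the mean value theorem,'' and your two-segment decomposition with the fundamental theorem of calculus is precisely that computation spelled out. There is nothing to add or correct.
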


\begin{proof}[Proof of Proposition \ref{prop:X_i_moment_open}]
    Throughout this proof, we 
write $\bX=\bX^\bu$ for notational simplicity.
    By \eqref{eq:X_i_u_i_open} and It\^{o}'s formula, for all $t\in [0,T]$, 
    $$
      \d |X _{t,i}|^p=  |X_{t,i}|^{p-2}\left(
      p X_{t,i}  \left( u_{t,i} + b_i(t,  X_{t,i},  \mathbf{X}_{t}) \right)
      +\frac{p(p-1)}{2}\sigma_i^2(t) 
      \right)  
    \d t+p|X_{t,i}|^{p-2} X_{t,i} \sigma_i(t) \d W^i_t, 
      \quad X^p_{0,i} = x^p_i.
    $$
   Taking the expectation of both sides and using the fact that 
    $\left(\int_0^t |X_{u,i}|^{p-2}X_{u,i} \sigma_i(u) \d   W^i_u\right)_{t\ge 0} $
    is a martingale (see \cite[Problem 2.10.7]{zhang2017backward}) yield  that 
    \begin{align*}
      &  \sE[|X_{t,i}|^p ]
 \leq |x_i|^p+   \sE\left[\int_0^t   
 \left(
 p|{X}_{s,i} |^{p-1} \left( 
        |u_i(s)| + 
         L^{b} (1+|{X}_{s,i} | )+\frac{L^{b}_y}{N}
        \sum_{k=1}^N |{X}_{s,k}|
        \right)
        +   |X_{s,i}|^{p-2} \frac{p(p-1)}{2}\sigma_i^2(s) 
        \right)
 \d s \right]. 
  \end{align*}
  By Young's inequality, for all $a, b\ge 0$, 
  $ab \le \frac{p-1}{p}a^{p/(p-1)}+\frac{1}{p}b^p$ and 
  $a b \le \frac{p-2}{p} a^{p/(p-2)}+\frac{2}{p}b^{p/2}$ if $p>2$. 
  Hence 
   \begin{align}
        \label{eq:X_i_bound_open}
   \begin{split}
        \sE[|X_{t,i}|^p ]
&  \leq |x_i|^p +   \sE\bigg[\int_0^t   
\bigg(
    (p-1)|X_{s,i}|^p + |u_i(s)|^p + 
  L^{b} (1+ (2p-1)  |{X}_{s,i} |^{p} )
        \\
        &\quad 
        +\frac{L^{b}_y}{N}
        \sum_{k=1}^N 
        \left((p-1) |{X}_{s,i} |^{p}+        |{X}_{s,k}|^p \right)
        +    \frac{p(p-1)}{2}\left( \frac{p-2}{p} |X_{s,i}|^{p} +\frac{2}{p}|\sigma_i(s)|^{p} \right)  
        \bigg)
 \d s \bigg]
     \\
&  \leq |x_i|^p + (p-1)\|\sigma_i\|^p_{L^p}+ L^{b} T
+ \| u_i\|_{\mathcal{H}^p (\mathbb{R})}^p
    \\
        &\quad 
+  \int_0^t   
\bigg(
 \left( L^{b}  (2p-1)
  + L^{b}_y(p-1)+ \frac{(p-1)p}{2}
  \right)  \sE[ |{X}_{s,i} |^{p}] +\frac{L^{b}_y}{N}
        \sum_{k=1}^N      \sE[|{X}_{s,k}|^p ]
        \bigg)
 \d s.
 \end{split} 
  \end{align}
    Summing up the above equation over the index  $i\in { [N]}$ yields for all $t\in [0,T]$,
  \begin{align*}
   \begin{split}
       \sum_{i=1}^N \sE[|X_{t,i}|^p ]
&  \leq  \sum_{i=1}^N \left( |x_i|^p + (p-1)\|\sigma_i\|^p_{L^p}+ L^{b} T + \| u_i\|_{\mathcal{H}^p (\mathbb{R})}^p\right)
    \\
        &\quad 
+  \int_0^t   
\bigg(
 \left( L^{b}  (2p-1)
  + L^{b}_y p+ \frac{(p-1)p}{2}
  \right)   
        \sum_{k=1}^N      \sE[|{X}_{s,k}|^p ]
        \bigg)
 \d s,
 \end{split} 
  \end{align*}
      which along with   Gronwall's inequality implies that 
     \begin{align*}
     \begin{split}
        \sum_{k=1}^N\sE[|X_{t,k}| ^p] 
        &\leq  
        \sum_{k=1}^N \left( |x_k|^p + (p-1)\|\sigma_k\|^p_{L^p}+ L^{b} T +  \| u_k\|_{\mathcal{H}^p (\mathbb{R})}^p \right) 
        e^{  c_p (  L^{b} +     L^{b}_y+1    )  T },
\end{split}
    \end{align*}
for a constant $c_p\ge 1$ depending only on $p$. 
    Substituting the above inequality into \eqref{eq:X_i_bound_open}
    and applying Gronwall's inequality
    yield  
 \begin{align*}
   \begin{split}
       \sE[|X_{t,i}|^p ]
&  \leq |x_i|^p + (p-1)\|\sigma_i\|^p_{L^p}+ L^{b} T
+  \int_0^t   
 c_p\left( L^{b}   
  + L^{b}_y + 1 
  \right)  \sE[ |{X}_{s,i} |^{p}]  
 \d s  
    \\
        &\quad 
+ \frac{L^{b}_y  }{N}           \sum_{k=1}^N \left( |x_k|^p + (p-1)\|\sigma_k\|^p_{L^p}+ L^{b} T +  \| u_k\|_{\mathcal{H}^p (\mathbb{R})}^p \right) 
        e^{c_p (L^{b} + L^{b}_y+1)T}
 \\
 &\le \bigg( |x_i|^p + (p-1)\|\sigma_i\|^p_{L^p}+ L^{b} T
 + \| u_i\|_{\mathcal{H}^p (\mathbb{R})}^p\\
 &\quad 
 + \frac{L^{b}_y }{N}        \sum_{k=1}^N \left( |x_k|^p + (p-1)\|\sigma_k\|^p_{L^p}+ L^{b} T +  \| u_k\|_{\mathcal{H}^p (\mathbb{R})}^p \right) 
        e^{  c_p (  L^{b} +     L^{b}_y+1    )  T }
 \bigg)e^{  c_p (  L^{b} +     L^{b}_y+1    )  T }.
 \end{split} 
  \end{align*}
 This finishes the proof. 
\end{proof}

The following lemma will be used to estimate the sensitivity processes.

 \begin{lemma}
  \label{lemma:S_i_general}
Let $p\ge 2$ and for each $i,j\in { [N]}$,    
let 
$B_i,\bar{B}_{i,j}: \Omega\times [0,T]\to \sR$ 
be bounded adapted processes, 
and 
$f_i\in \cH^p(\sR) $.  
Let 
$\textbf{S}  =
(S_{i})_{i=1}^N\in \cS^p(\sR^N)$ satisfy the following dynamics: 
for all $t\in [0,T]$,
\begin{equation} 
\label{eq:S_i_general}
    \d S_{t,i}=
    \left(
     B_i(t) S_{t,i}+ \sum_{j=1}^N\bar{B}_{ij}(t)  S_{t,j}   
    +f_{t,i} 
    \right)
    \d t, 
    \quad S_{0,i}=0; 
      \quad \forall i=1, \cdots, N.
\end{equation}
Then for all  $i\in { [N]}$,
\begin{align*} 
  \sup_{t\in [0,T]}  \sE[  |S_{t,i}|^p]
    &\le 
  (2 T)^{p-1}   \left( 
   \left\|   
    f_{ i}    \right\|^p_{\cH^p(\sR)}
     + 
  \left\|  \sum_{k=1}^N
 |  f_{k}
   |   
      \right\|^p_{\cH^p(\sR)}
    \|\bar B\|^p_{\infty}
 T^p e^{ p (\|B\|_{\infty}+N \|\bar B\|_{\infty}) T}
   \right) e^{p\|B\|_\infty T}.
\end{align*}
where $\|B\|_{\infty}=\max_{i\in { [N]}}\|B_i\|_{L^\infty}$
and 
$\|\bar B\|_{\infty}=\max_{i,j\in { [N]}}\|\bar B_{i,j}\|_{L^\infty}$.
\end{lemma}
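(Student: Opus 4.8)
The plan is to treat the system \eqref{eq:S_i_general} as a linear random ODE and to exploit its structure at two scales: the diagonal drift $B_i$ governs the growth of each individual coordinate $S_{\cdot,i}$, whereas the full coupling array $(\bar B_{ij})$ enters only through the aggregate $\Sigma_t \coloneqq \sum_{j=1}^N |S_{t,j}|$. All estimates will be carried out pathwise, for fixed $\omega$, with expectations taken only at the very end; the noise enters solely through the forcing $f_i$, so no martingale or stochastic integral arguments are needed.

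First I would derive, directly from \eqref{eq:S_i_general} and the a.e.\ inequality $\frac{\d}{\d t}|S_{t,i}| \le \big|\frac{\d}{\d t} S_{t,i}\big|$, the pathwise bound $\frac{\d}{\d t}|S_{t,i}| \le \|B\|_\infty |S_{t,i}| + \|\bar B\|_\infty \Sigma_t + |f_{t,i}|$. Summing over $i \in I_N$ gives $\frac{\d}{\d t}\Sigma_t \le (\|B\|_\infty + N\|\bar B\|_\infty)\Sigma_t + \sum_{k=1}^N |f_{t,k}|$; since $\Sigma_0 = 0$, the integral form of Gronwall's inequality yields $\Sigma_t \le e^{(\|B\|_\infty + N\|\bar B\|_\infty)T}\int_0^T \sum_{k=1}^N |f_{s,k}|\,\d s$ for all $t\in[0,T]$. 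This is the step in which the coupling is absorbed, and it is responsible for the exponential factor carrying the rate $\|B\|_\infty + N\|\bar B\|_\infty$.

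Next, returning to the single-coordinate inequality and treating $\|\bar B\|_\infty \Sigma_t + |f_{t,i}|$ as an inhomogeneous forcing, a second application of Gronwall (equivalently variation of constants, using $|e^{\int_s^t B_i\,\d u}| \le e^{\|B\|_\infty T}$) gives $|S_{t,i}| \le e^{\|B\|_\infty T}\big(\int_0^T |f_{s,i}|\,\d s + \|\bar B\|_\infty \int_0^T \Sigma_s\,\d s\big)$. Inserting the bound on $\Sigma$ from the previous step produces $|S_{t,i}| \le e^{\|B\|_\infty T}\big(\int_0^T |f_{s,i}|\,\d s + \|\bar B\|_\infty T\, e^{(\|B\|_\infty + N\|\bar B\|_\infty)T}\int_0^T \sum_{k} |f_{s,k}|\,\d s\big)$. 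Finally I would apply $(a+b)^p \le 2^{p-1}(a^p+b^p)$ and then Jensen/Hölder in time, $\big(\int_0^T |h_s|\,\d s\big)^p \le T^{p-1}\int_0^T |h_s|^p\,\d s$, take expectations, and recognize the resulting time integrals as $\|f_i\|_{\cH^p(\sR)}^p$ and $\big\|\sum_{k} |f_k|\big\|_{\cH^p(\sR)}^p$. The constant $2^{p-1}T^{p-1} = (2T)^{p-1}$ then emerges exactly as claimed, multiplied by $e^{p\|B\|_\infty T}$.

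The computation is in essence routine linear-ODE bookkeeping; the only genuinely delicate point is the \emph{ordering} of the two Gronwall estimates. One must bound the aggregate $\Sigma$ first, accepting the large rate $\|B\|_\infty + N\|\bar B\|_\infty$ there, and only afterwards bound each coordinate with the small rate $\|B\|_\infty$. Reversing the order, or applying Gronwall directly to $|S_{t,i}|$ with $\Sigma_t$ still coupled in, would either destroy the clean separation of the two exponential factors or introduce an unwanted factor of $N$ into the dominant $e^{p\|B\|_\infty T}$ term, spoiling the sharp dependence on $N$ that is needed downstream in Proposition \ref{prop:Z_h_ell_i_moment_open}.
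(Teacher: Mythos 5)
Your proposal is correct and follows essentially the same route as the paper: a first Gronwall estimate on the aggregate $\sum_k |S_{t,k}|$ with rate $\|B\|_\infty + N\|\bar B\|_\infty$, a second Gronwall estimate on each coordinate with rate $\|B\|_\infty$ after substituting the aggregate bound as a forcing term, and finally $(a+b)^p \le 2^{p-1}(a^p+b^p)$ with Jensen in time to produce the $(2T)^{p-1}$ factor and the $\cH^p$-norms. The only cosmetic difference is that you phrase the Gronwall steps via pathwise differential inequalities while the paper works with the equivalent integral form; your closing remark about the necessary ordering of the two estimates matches the paper's argument exactly.
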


\begin{proof}
By \eqref{eq:S_i_general}, 
for all $t\in [0,T]$ and $i\in { [N]}$, 
\begin{align}
\label{eq:S_i_bound}
\begin{split}
    |S_{t,i}|
    &\le 
    \int_0^t
    \left( 
  \|B\|_{\infty} |S_{u,i}|+
  \|\bar B\|_{ \infty} \sum_{k=1}^N 
  | S_{u,k}|
    +  | f_{u,i} 
     |
    \right)
    \d u.
\end{split}    
\end{align}
  Summarizing \eqref{eq:S_i_bound}  over the index  $i\in { [N]}$ yields for all $t\in [0,T]$,
\begin{align*} 
  \sum_{k=1}^N   |S_{t,k}|
    &\le 
    \int_0^t \left( (\|B\|_{\infty}+ N\|\bar B\|_{\infty}) \sum_{k=1}^N  \left| S_{u,k}\right| +   \sum_{k=1}^N \left|     f_{u,k}  \right|     \right)  \d u,
\end{align*}
      which along with Gronwall's inequality implies that
 \begin{align*} 
   \sum_{k=1}^N   | S_{t,k}|
    &\le 
     \left(\int_0^T  \sum_{k=1}^N
      \left|  f_{u,i}
    \right|  
    \d u\right)e^{  (\|B\|_{\infty}+N \|\bar B\|_{\infty}) t}.
\end{align*}
  Substituting the above inequality into  \eqref{eq:S_i_bound}
    yields for all $t>0$,
    \begin{align*}
\begin{split}
    |S_{t,i}|
    & \le 
    \int_0^t
  \|B\|_{\infty} |S_{u,i}|\d u 
  +
  \left( \left(\int_0^T   \sum_{k=1}^N
      \left|  f_{u,i}
    \right|  
    \d u\right) \int_0^T   \|\bar B\|_{\infty}
 e^{  (\|B\|_{\infty}+ N\|\bar B\|_{\infty}) u}\d u
    \right) + \int_0^T | f_{u,i} | \d u.
\end{split}    
\end{align*}
This with    Gronwall's inequality shows that for all $t>0$,
    \begin{align*} 
    |S_{t,i}|
    &\le 
  \left( \int_0^T   
  | f_{u,i}|\d u   
     + 
 \left( \int_0^T  \sum_{k=1}^N
      \left|  f_{u,k}
    \right|   
    \d u  \right)
    \|\bar B\|_{\infty}
 T e^{  (\|B\|_{\infty}+N \|\bar B\|_{\infty}) T}
  \right)e^{\|B\|_\infty T}.
\end{align*}
Taking the $p$-th moments of 
 both sides of the above inequality and using the   fact that 
 $(a+b)^p\le 2^{p-1}(a^p+b^p)$ for all $a,b\ge 0$
 yield 
\begin{align*} 
   \sE[  |S_{t,i}|^p]
    &\le 
  \sE\left[ 
   \left(    \int_0^T   
  | f_{u,i}|\d u   
     + 
 \left( \int_0^T  \sum_{k=1}^N
      \left|  f_{u,k}
    \right|   
    \d u  \right)
    \|\bar B\|_{\infty}
 T e^{  (\|B\|_{\infty}+N \|\bar B\|_{\infty}) T}
  \right)^p
  \right] e^{p\|B\|_\infty T}
  \\
   &\le 
2^{p-1}  \sE\left[ 
   \left(    \int_0^T   
  | f_{u,i}|\d u   \right)^p
     + 
  \left( \int_0^T  \sum_{k=1}^N
      \left|  f_{u,k}
    \right|   
    \d u  \right)^p
    \|\bar B\|^p_{\infty}
 T^p e^{ p (\|B\|_{\infty}+N \|\bar B\|_{\infty}) T}
   \right] e^{p\|B\|_\infty T}
   \\
  &\le 
(2 T)^{p-1}   \left( 
   \left\|   
    f_{ i}    \right\|^p_{\cH^p(\sR)}
     + 
  \left\|  \sum_{k=1}^N
 |  f_{k}
   |   
      \right\|^p_{\cH^p(\sR)}
    \|\bar B\|^p_{\infty}
 T^p e^{ p (\|B\|_{\infty}+N \|\bar B\|_{\infty}) T}
   \right) e^{p\|B\|_\infty T}.
\end{align*}
This proves the desired estimate. 
\end{proof}

\begin{proof}[Proof of Proposition \ref{prop:Y_h_i_moment_open_loop}]
    To simplify the notation,
we write $\bX=\bX^\bu$ and $\bY^h=\bY^{\bu,u'_h}$.  
  Applying Lemma \ref{lemma:S_i_general} with 
  $\textbf{S} = \bY^h$, 
  $B_{i}(t) =\partial_x b_i(t, X_{t,i}, \bX_{t})$,
  $\bar{B}_{i,j}(t)=\partial_{y_j} b_i(t, X_{t,i}, \bX_{t})$
  and 
  $f_{t,i} = \delta_{h,i} u'_{t,h}$ yields that 
    for all  $i\in { [N]}$,
 \begin{align}
 \label{eq:Y_moment_estimate1_open}
 \begin{split}
\sup_{t\in [0,T]}  \sE[   |Y^h_{t,i}|^p]
 &\le  (2 T)^{p-1}  \left( 
      \left\|    
  f_{ i}  \right\|^p_{\cH^p(\sR)}
     + 
 \left\|   \sum_{k=1}^N
   \left|  f_{k}
    \right|   
       \right\|^p_{\cH^p(\sR)}
    \frac{ ( L^{b}_y)^p}{N^p} 
 T^p e^{  p(      L^{b} +      L^{b}_y) T}
  \right) e^{p L^{b} T}\\
  &\le (2 T)^{p-1}  \left( 
     \delta_{h,i} 
     + 
    \frac{ ( L^{b}_y)^p}{N^p} 
 T^p e^{  p(      L^{b} +      L^{b}_y) T}
  \right)  \| u_h'\|^p_{\mathcal{H}^p (\mathbb{R})} e^{p L^{b} T}.
  \end{split}
 \end{align}
 where we used $\|\bar{B}_{i,j}\|_{L^\infty}\le L^{b}/N$.
\end{proof}

Finally, to prove Proposition \ref{prop:Z_h_ell_i_moment_open},
we estimate the moment of the process
$\mathfrak{f}^{\bu,u'_h,u''_\ell}_{i}$ defined in \eqref{eq:f_phi_h_ell_open_loop}.

\begin{lemma}\label{lemma: bound_f_open_loop}
Suppose Assumption \ref{assum:regularity} holds.
For all 
 $ \bu\in \cH^2(\sR^N)  $, $i, h,\ell \in { [N]}$ with $h\not =\ell$,
 and   all 
$ u'_h,  
 u''_\ell\in  \cH^{4}(\sR)$,
the process $\mathfrak{f}^{\bu,u'_h,u''_\ell}_{i}$ defined in \eqref{eq:f_phi_h_ell_open_loop}
satisfies 
 \begin{align*}
\| \mathfrak{f}^{\bu,u'_h,u''_\ell}_{i}\|_{\cH^2(\sR)} 
\le C \left((\delta_{h,i}  +\delta_{\ell,i} )\frac{1 }{N}+\frac{1}{N^2}\right)L^{b}_y
      \|u_h' \|_{\cH^4(\sR)} \| u_\ell''\|_{\cH^4(\sR)} , 
 \end{align*}
 where   $C\ge 0$ is a    constant   
 depending only on   the upper bounds of      $T$, 
$L^b$ and $L_y^b$.
\end{lemma}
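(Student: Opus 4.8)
The plan is to expand the quadratic form defining $\mathfrak{f}^{\bu,u'_h,u''_\ell}_{t,i}$ in \eqref{eq:f_phi_h_ell_open_loop} into its four natural blocks and estimate the $\cH^2(\sR)$-norm of each block separately. Writing $\bY^h\coloneqq\bY^{\bu,u'_h}$ and $\bY^\ell\coloneqq\bY^{\bu,u''_\ell}$ with components $Y^h_{t,j},Y^\ell_{t,k}$ as in \eqref{eq:Y_h_i_open}, the block decomposition reads
\[
\mathfrak{f}^{\bu,u'_h,u''_\ell}_{t,i}=(\partial^2_{xx}b_i)Y^h_{t,i}Y^\ell_{t,i}+Y^h_{t,i}\sum_{k=1}^N(\partial^2_{xy_k}b_i)Y^\ell_{t,k}+Y^\ell_{t,i}\sum_{j=1}^N(\partial^2_{y_jx}b_i)Y^h_{t,j}+\sum_{j,k=1}^N(\partial^2_{y_jy_k}b_i)Y^h_{t,j}Y^\ell_{t,k},
\]
where every derivative of $b_i$ is evaluated at $(t,X^\bu_{t,i},\bX^\bu_t)$. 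Since $b_i\in\mathscr{F}^{0,2}([0,T]\times\sR\times\sR^N;\sR)$, the coefficients obey the deterministic bounds $|\partial^2_{xx}b_i|\le L^b$, $|\partial^2_{xy_k}b_i|\le L^b_y/N$, and $|\partial^2_{y_jy_k}b_i|\le\frac1N L^b_y\mathds{1}_{j=k}+\frac{1}{N^2}L^b_y\mathds{1}_{j\ne k}$, so these constants can be pulled out of each norm. After the triangle inequality, each remaining term is the $\cH^2(\sR)$-norm of a product $Y^h_{\cdot,j}Y^\ell_{\cdot,k}$, which by the Cauchy--Schwarz inequality is at most $\sqrt T\,\sup_{t}\sE[|Y^h_{t,j}|^4]^{1/4}\sup_{t}\sE[|Y^\ell_{t,k}|^4]^{1/4}$; this is precisely why the hypotheses require $u'_h,u''_\ell\in\cH^4(\sR)$.

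To control these fourth moments I would invoke Proposition \ref{prop:Y_h_i_moment_open_loop} with $p=4$, which gives $\sup_t\sE[|Y^h_{t,j}|^4]\le(\delta_{h,j}C^{h,4}_Y+\frac{(L^b_y)^4}{N^4}\bar C^{h,4}_Y)\|u'_h\|_{\cH^4(\sR)}^4$. Taking fourth roots and using $(a+b)^{1/4}\le a^{1/4}+b^{1/4}$ together with $\delta_{h,j}\in\{0,1\}$ yields the clean structural estimate $\sup_t\sE[|Y^h_{t,j}|^4]^{1/4}\le C(\delta_{h,j}+\frac{L^b_y}{N})\|u'_h\|_{\cH^4(\sR)}$, and analogously for $Y^\ell_{\cdot,k}$; here and below $C$ depends only on $T,L^b,L^b_y$. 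The upshot is that the $j$-th component of $\bY^h$ has size of order $\delta_{h,j}+L^b_y/N$: it is $O(1)$ only on the single index $j=h$, and is $O(L^b_y/N)$ on every other index.

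Substituting these bounds reduces the proof to bookkeeping of the index sums, and the essential point is the hypothesis $h\ne\ell$. In the $xx$-block the product carries $(\delta_{h,i}+\frac{L^b_y}{N})(\delta_{\ell,i}+\frac{L^b_y}{N})$, whose dangerous $O(1)$ cross term $\delta_{h,i}\delta_{\ell,i}$ vanishes because $i$ cannot equal both $h$ and $\ell$; what survives is $\delta_{h,i}\frac{L^b_y}{N}+\delta_{\ell,i}\frac{L^b_y}{N}+\frac{(L^b_y)^2}{N^2}$, which is of the claimed order and crucially already carries the overall factor $L^b_y$ even though the $xx$-coefficient is only $O(1)$. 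For the $xy$- and $yx$-blocks the coefficient contributes $L^b_y/N$, while the free index sum gives $\sum_k(\delta_{\ell,k}+\frac{L^b_y}{N})=1+L^b_y$, producing $\frac{L^b_y}{N}(\delta_{h,i}+\frac{L^b_y}{N})(1+L^b_y)$ and its $h\leftrightarrow\ell$ counterpart, which supply the $\delta_{h,i}/N$ and $\delta_{\ell,i}/N$ parts. In the $yy$-block, the diagonal part ($j=k$, coefficient $L^b_y/N$) again uses $\sum_j\delta_{h,j}\delta_{\ell,j}=0$ to leave an $O((L^b_y)^2/N^2)$ term, and the off-diagonal part ($j\ne k$, coefficient $L^b_y/N^2$) is bounded by $\frac{L^b_y}{N^2}\big(\sum_j\|Y^h_{\cdot,j}\|\big)\big(\sum_k\|Y^\ell_{\cdot,k}\|\big)\lesssim\frac{L^b_y}{N^2}(1+L^b_y)^2$.

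Summing the four blocks and absorbing the $(1+L^b_y)$ factors and the constants from Proposition \ref{prop:Y_h_i_moment_open_loop} into a single $C=C(T,L^b,L^b_y)$ gives the stated bound. The main obstacle is not any single estimate but the combinatorial tracking of the powers of $1/N$ across the many cross terms; the decisive and slightly delicate observation is that $h\ne\ell$ simultaneously kills the only $O(1)$ contribution (the $xx$ diagonal) and forces the overall factor $L^b_y$ to appear in that block even though its coefficient is not itself small.
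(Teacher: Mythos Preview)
Your proposal is correct and follows essentially the same route as the paper: expand $\mathfrak{f}^{\bu,u'_h,u''_\ell}_{t,i}$ into its $xx$, $xy$, $yx$, $yy$ blocks, reduce every term to a product $Y^h_{\cdot,j}Y^\ell_{\cdot,k}$ via Cauchy--Schwarz, invoke Proposition~\ref{prop:Y_h_i_moment_open_loop} with $p=4$, and use $h\ne\ell$ to kill the $\delta_{h,i}\delta_{\ell,i}$ term in the $xx$ block. Your intermediate estimate $\sup_t\sE[|Y^h_{t,j}|^4]^{1/4}\le C(\delta_{h,j}+L^b_y/N)\|u'_h\|_{\cH^4(\sR)}$ packages the bookkeeping a bit more cleanly than the paper's longer case-by-case treatment of the $yy$ block, but the ingredients and the logic are identical.
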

\begin{proof}
Fix $h,\ell \in { [N]}$ with $h\ne \ell$.
    To simplify the notation,
we write $\bX=\bX^\bu$, $\bY^h=\bY^{\bu, u'_h}$ and
$\bY^\ell=\bY^{\bu,u''_\ell}$. 
Observe that  by Proposition \ref{prop:Y_h_i_moment_open_loop}, 
 for all  $i,j\in { [N]}$,
\begin{align}
\label{eq:Y_product_open_loop}
\begin{split}
&\left\|   
Y^{h}_{i}Y^{\ell}_{j}   \right\|^2_{\cH^2(\sR)}
\le T \sup_{t\in [0,T]}
\sE [   | Y^{h}_{t, i}Y^{\ell}_{t, j} |^2] 
\le T \sup_{t\in [0,T]}
\sE [   | Y^{h}_{t, i}|^4]^{\frac{1}{2}} \sE[|Y^{\ell}_{t, j} |^4]^{\frac{1}{2}}
\\
&\le 
T \| u_h'\|_{\cH^4(\sR)}^2 \| u_\ell''\|_{\cH^4(\sR)}^2
\left( \delta_{h,i} C^{h,4}_Y
 +\frac{ ( L^{b}_y)^4}{N^4}\bar{C}^{h,4}_Y \right)^{\frac{1}{2}} 
  \left( \delta_{\ell,j} C^{\ell,4}_Y
 +\frac{( L^{b}_y)^4}{N^4}\bar{C}^{\ell,4}_Y \right)^{\frac{1}{2}} 
\\
&\le 
T \| u_h'\|_{\cH^4(\sR)}^2 \| u_\ell''\|_{\cH^4(\sR)}^2 \\
&\quad\times\bigg( \delta_{h,i} \delta_{\ell,j} (C^{h,4}_Y C^{\ell,4}_Y)^{\frac{1}{2}} 
+\frac{( L^{b}_y)^2}{N^2} \left(\delta_{h,i} (C^{h,4}_Y \bar{C}^{\ell,4}_Y)^{\frac{1}{2}} 
+\delta_{\ell,j} (C^{\ell,4}_Y \bar{C}^{h,4}_Y)^{\frac{1}{2}} \right)
  +\frac{( L^{b}_y)^4}{N^4}(\bar{C}^{h,4}_Y \bar{C}^{\ell,4}_Y)^{\frac{1}{2}} 
   \bigg)\\
&\leq
C  \| u_h'\|_{\cH^4(\sR)}^2 \| u_\ell''\|_{\cH^4(\sR)}^2 \bigg( \delta_{h,i} \delta_{\ell,j} 
+\frac{( L^{b}_y)^2}{N^2} \left(\delta_{h,i} 
+\delta_{\ell,j}  \right)
  +\frac{( L^{b}_y)^4}{N^4} 
   \bigg),
  \end{split}
\end{align}
where the third line follows by noting 
$
\sqrt{a_1 + \cdots + a_N} \leq \sqrt{a_1} + \cdots +\sqrt{a_N}
$
for any $a_1, \cdots, a_N \ge 0.$

 We now bound each term in \eqref{eq:f_phi_h_ell_open_loop}.
Observe that by \eqref{eq:f_phi_h_ell_open_loop}, for all $t\in [0,T]$, 
\begin{align}
\label{eq:f_phi_h_ell_open_loop_vector}
\begin{split}
\mathfrak{f}^{\bu,u'_h,u''_\ell}_{t,i}&
=  (\partial^2_{xx} b_i)(t, X_{t,i}, \bX_{t}) Y^{h}_{t,i}Y^{\ell}_{t,i}
 +  \sum_{j=1}^N (\partial^2_{x y_j} b_i)(t, X_{t,i}, \bX_{t}) 
(
Y^{ h}_{t,i}Y^{\ell}_{t,j}+Y^{\ell}_{t,i}Y^{h}_{t,j})
+\sum_{j,k=1}^N (\partial^2_{y_j y_k} b_i)(t, X_{t,i}, \bX_{t}) 
Y^{h}_{t,j}Y^{\ell}_{t,k}.
\end{split}
\end{align}
Apply \eqref{eq:Y_product_open_loop} with the fact that $\delta_{h,i}\delta_{\ell,i}=0$ as $h\not =\ell$ to get
 \begin{align}
  \label{eq:b_xx_open} 
 \begin{split}
 &\|  (\partial^2_{xx} b_i)(\cdot, X_{\cdot, i}, \bX_\cdot ) Y^{h}_{i}Y^{\ell}_{i}\|_{\cH^2(\sR)}
\le L^{b} \|   Y^{h}_{i}Y^{\ell}_{i}\|^2_{\cH^2(\sR)}
\le  C  \| u_h'\|_{\cH^4(\sR)} \| u_\ell''\|_{\cH^4(\sR)} \left((\delta_{h,i}  +\delta_{\ell,i} )\frac{L_y^b}{N}+\frac{( L^{b}_y)^2}{N^2}\right),
 \end{split}
 \end{align}
 where $C$ is a constant depending on $T$, $C_Y^{h,4}$ and $\bar{C}_Y^{h,4}$ for any $h \in { [N]}$.

We then estimate 
 $\sum_{j=1}^N (\partial^2_{x y_j} b_i)(\cdot, X_{\cdot, i}, \bX_{\cdot}) 
(
Y^{h}_{ i}Y^{ \ell}_{j}+Y^{ \ell}_{ i}Y^{ h}_{j}) $ in \eqref{eq:f_phi_h_ell_open_loop_vector}.
  The fact that   $ \partial^2_{x y_j} b_i $ is bounded by $L_{y} ^{b}/N$ and the inequality that 
 $(\sum_{k=1}^N a_k)^2\le N \sum_{k=1}^N a^2_k$ for all $a_1, a_2, \cdots, a_N \in  [0,\infty)$ show that 
  \begin{align*}
&  \left\|\sum_{j=1}^N (\partial^2_{x y_j} b_i)(\cdot, X_{\cdot, i}, \bX_{\cdot}) 
(
Y^{h}_{ i}Y^{ \ell}_{j}+Y^{ \ell}_{ i}Y^{ h}_{j}) \right\|^2_{\cH^2(\sR)}
\le \frac{(L^{b}_y)^2 }{N^2 }\left\|\sum_{j=1}^N (| 
Y^{h}_{ i}Y^{ \ell}_{j}|+|Y^{ \ell}_{ i}Y^{ h}_{j}|) \right\|^2_{\cH^2(\sR)}
\notag\\
&
=
  \frac{(L^{b}_y)^2 }{N^2 }\left\|
  | Y^{h}_{ i}Y^{ \ell}_{\ell}|
  +|Y^{ \ell}_{ i}Y^{ h}_{h}|
+  \sum_{j\not =\ell} | 
Y^{h}_{ i}Y^{ \ell}_{j}|
+ \sum_{j\not =h} |Y^{ \ell}_{ i}Y^{ h}_{j}| \right\|^2_{\cH^2(\sR)}
\notag\\
&\le 4 \frac{(L^{b}_y)^2 }{N^2 }\left(
\| Y^{h}_{ i}Y^{ \ell}_{\ell}\|^2_{\cH^2(\sR)}
  +\|Y^{ \ell}_{ i}Y^{ h}_{h}\|^2_{\cH^2(\sR)}
+ \bigg \|\sum_{j\not =\ell} | 
Y^{h}_{ i}Y^{ \ell}_{j}|\bigg\|^2_{\cH^2(\sR)}
+ \bigg\|\sum_{j\not =h} |Y^{ \ell}_{ i}Y^{ h}_{j}|  \bigg\|^2_{\cH^2(\sR)}\right)
\notag\\
&\le 4 \frac{(L^{b}_y)^2 }{N^2 }\left(
\| Y^{h}_{ i}Y^{ \ell}_{\ell}\|^2_{\cH^2(\sR)}
  +\|Y^{ \ell}_{ i}Y^{ h}_{h}\|^2_{\cH^2(\sR)}
+(N-1) \left( \sum_{j\not =\ell} \| 
Y^{h}_{ i}Y^{ \ell}_{j}\|^2_{\cH^2(\sR)}
+ \sum_{j\not =h} \|Y^{ \ell}_{ i}Y^{ h}_{j}\|^2_{\cH^2(\sR)} \right) \right),
 \end{align*}
  which along with \eqref{eq:Y_product_open_loop} yields
\begin{align}
\label{eq:b_xy_open}
 &\left\|\sum_{j=1}^N (\partial^2_{x y_j} b_i)(\cdot, X_{\cdot, i}, \bX_{\cdot}) 
(
Y^{h}_{ i}Y^{ \ell}_{j}+Y^{ \ell}_{ i}Y^{ h}_{j}) \right\|^2_{\cH^2(\sR)}
\le   \frac{4 (L^{b}_y)^2 }{N^2 } C  \| u_h'\|_{\cH^4(\sR)}^2 \| u_\ell''\|_{\cH^4(\sR)}^2 \bigg[
 \bigg( \delta_{h,i} 
+\frac{(L^{b}_y)^2}{N^2} \left(\delta_{h,i}  
+ 1\right)
  +\frac{(L^{b}_y)^4}{N^4}
   \bigg)
   \notag\\
   &\quad 
    +
    \bigg(  \delta_{\ell,i} 
+\frac{(L^{b}_y)^2}{N^2} \left(  1
+\delta_{\ell,i}  \right)
  +\frac{(L^{b}_y)^4}{N^4}
   \bigg)
 +\frac{ N-1}{N^2}      \bigg( 
   \delta_{h,i} 
  +\frac{(L^{b}_y)^2}{N^2}
+
\delta_{\ell,i} 
  +\frac{(L^{b}_y)^2}{N^2}
   \bigg)
   \bigg],
   \notag\\
    &\le C   \| u_h'\|_{\cH^4(\sR)}^2 \| u_\ell''\|_{\cH^4(\sR)}^2  \left(
(  \delta_{h,i} + \delta_{\ell,i})  \frac{(L_y^b)^2 }{N^2 } +\frac{(L_y^b)^4}{N^4}
\right).
\end{align}

Finally, we  estimate $\sum_{j,k=1}^N(\partial^2_{y_j y_k} b_i)(t, X_{i}, \bX) 
Y^{h}_{j}Y^{\ell}_{k}
$ in \eqref{eq:f_phi_h_ell_open_loop_vector}. 
We write for simplicity $(\partial^2_{y_j y_k} b_i)(\cdot) =(\partial^2_{y_j y_k} b_i)(\cdot, X_{\cdot, i}, \bX_\cdot) $  for all $j,k\in { [N]}$. 
Since $h\not =\ell$, 
${ [N]}\times { [N]} =\{(h,\ell)\} \cup   \{(h,h)\}\cup
\{(\ell,\ell)\}\cup  
\big\{(h,k)\mid k \in { [N]}\setminus\{h,\ell\}\big\}
\cup \big\{(j,\ell)\mid j \in { [N]}\setminus\{h,\ell \}\big\}
\cup
\big\{(j,k)\mid 
j\in   { [N]}\setminus\{h \}, k\in { [N]}\setminus \{\ell\}\big\}$, and hence
\begin{align}
\label{eq:sum_k_Y_ell_h_decompose_open_loop}
\begin{split}
 &  \sum_{j,k=1}^N (\partial^2_{y_j y_k} b_i)(\cdot) 
Y^{h}_{j}Y^{\ell}_{k}= 
(\partial^2_{y_h y_\ell } b_i)(\cdot) Y^{h}_{h}Y^{\ell}_{\ell}
+
 (\partial^2_{y_h y_h } b_i)(\cdot) Y^{h}_{h}Y^{\ell}_{h} + (\partial^2_{y_\ell y_\ell} b_i)(\cdot) 
Y^{h}_{\ell}Y^{\ell}_{\ell}
\\
&\quad +
\sum_{k\in { [N]}\setminus\{ \ell,h\} }(\partial^2_{y_h y_k }b_i)(\cdot) Y^{h}_{h}Y^{\ell}_{k}
+
\sum_{j\in { [N]}\setminus\{ h,\ell\}}  (\partial^2_{y_jy_\ell} b_i)(\cdot) 
Y^{h}_{j}Y^{\ell}_{\ell}+
\sum_{j\in { [N]}\setminus\{ h\}} \sum_{k\in { [N]}\setminus\{ \ell\}}  (\partial^2_{y_jy_k} b_i)(\cdot) 
Y^{h}_{j}Y^{\ell}_{k}.
 \end{split}
\end{align}
To analyze the first line in \eqref{eq:sum_k_Y_ell_h_decompose_open_loop},
note that by \eqref{eq:Y_product_open_loop} and $h\not =\ell$, 
\begin{align}
\label{eq:sum_k_Y_ell_h_decompose_term13_open_loop}
\begin{split}
\| (\partial^2_{y_h y_\ell } b_i)(\cdot) Y^{h}_{h}Y^{\ell}_{\ell}\|_{\cH^2(\sR)}
&\le 
\frac{L^{b}_y}{N^2} \|  Y^{h}_{h}Y^{\ell}_{\ell}\|_{\cH^2(\sR)}
\le C  \|u_h' \|_{\cH^4(\sR)} \| u_\ell''\|_{\cH^4(\sR)} \frac{L^{b}_y}{N^2},
\\
\| (\partial^2_{y_h y_h } b_i)(\cdot) Y^{h}_{h}Y^{\ell}_{h}\|_{\cH^2(\sR)}
& 
\le \frac{L^{b}_y}{N} \|   Y^{h}_{h}Y^{\ell}_{h}\|_{\cH^2(\sR)}
\le C  \|u_h' \|_{\cH^4(\sR)} \| u_\ell''\|_{\cH^4(\sR)} \frac{(L^{b}_y)^2}{N^2},
\\
\|(\partial^2_{y_\ell y_\ell} b_i)(\cdot) 
Y^{h}_{\ell}Y^{\ell}_{\ell}\|_{\cH^2(\sR)}
&\le
  \frac{L^{b}_y}{N} \|  Y^{h}_{\ell}Y^{\ell}_{\ell} \|_{\cH^2(\sR)}
\le C  \|u_h' \|_{\cH^4(\sR)} \| u_\ell''\|_{\cH^4(\sR)} \frac{(L^{b}_y)^2}{N^2}.
\end{split}
\end{align}
Moreover, to analyze the first two terms in the second line of \eqref{eq:sum_k_Y_ell_h_decompose_open_loop}, by \eqref{eq:Y_product_open_loop},
\begin{align}
\label{eq:sum_k_Y_ell_h_decompose_term4_open}
\begin{split}
&\left\|  \sum_{k\in { [N]}\setminus\{ \ell,h\} }(\partial^2_{y_h y_k } b_i)(\cdot) Y^{h}_{h}Y^{\ell}_{k}\right\|_{\cH^2(\sR)}
+\left\| \sum_{j\in { [N]}\setminus\{ h,\ell\}}  (\partial^2_{y_j y_\ell} b_i)(\cdot) 
Y^{h}_{j}Y^{\ell}_{\ell} \right\|_{\cH^2(\sR)}
\\
& \le   \frac{L^{b}_y}{N^2} \sum_{k\in { [N]}\setminus\{ \ell,h\} }  \left\|  Y^{h}_{h}Y^{\ell}_{k}\right\|_{\cH^2(\sR)}
+ \frac{L^{b}_y}{N^2} \sum_{j\in { [N]}\setminus\{ h,\ell\}}   \left\|  Y^{h}_{j}Y^{\ell}_{\ell} \right\|_{\cH^2(\sR)}
 \le C  \| u_h'\|_{\cH^4(\sR)} \| u_\ell''\|_{\cH^4(\sR)}  \frac{L^{b}_y}{N^2} .
 \end{split}
\end{align}
Furthermore, to analyze the last term in \eqref{eq:sum_k_Y_ell_h_decompose_open_loop}, as $h\not =\ell$, 
\begin{align}
\label{eq:sum_k_Y_ell_h_decompose_term5_open}
\begin{split}
 & \sum_{j\in { [N]}\setminus\{ h\}} \sum_{k\in { [N]}\setminus\{ \ell\}}  (\partial^2_{y_j y_k} b_i)(\cdot) 
Y^{h}_{j}Y^{\ell}_{k}  
\\
&  
= 
  \sum_{k\in { [N]}\setminus\{ \ell\}}  (\partial^2_{y_\ell y_k} b_i)(\cdot) 
Y^{h}_{\ell }Y^{\ell}_{k} 
   + \sum_{j\in { [N]}\setminus\{ h,\ell\}} 
\bigg( (\partial^2_{y_j y_j} b_i)(\cdot) 
Y^{h}_{j}Y^{\ell}_{j}  
 + \sum_{k\in { [N]}\setminus\{ \ell,j\}}  (\partial^2_{y_j y_k} b_i)(\cdot) 
Y^{h}_{j}Y^{\ell}_{k}  
 \bigg),
\end{split}
\end{align}
where the first and second  terms can be estimated by 
\begin{align}
\begin{split}
&\left\| \sum_{k\in { [N]}\setminus\{ \ell\}}  (\partial^2_{y_\ell y_k} b_i)(\cdot) 
Y^{h}_{\ell }Y^{\ell}_{k} \right\|_{\cH^2(\sR)}
+ \left\| \sum_{j\in { [N]}\setminus\{ h,\ell\}} 
  (\partial^2_{y_jy_j} b_i)(\cdot) 
Y^{h}_{j}Y^{\ell}_{j}  \right\|_{\cH^2(\sR)}
\\
&
\le \sum_{k\in { [N]}\setminus\{ \ell\}}   \frac{L^{b}_y}{N^{ 2 }}\left\|  
Y^{h}_{\ell }Y^{\ell}_{k} \right\|_{\cH^2(\sR)}
+ \sum_{j\in { [N]}\setminus\{ h,\ell\}}  \frac{L^{b}_y}{N}  \left\| 
Y^{h}_{j}Y^{\ell}_{j}  \right\|_{\cH^2(\sR)}
\\
&\le
C\left( N  \frac{(L^{b}_y)^3}{N^{ 4 }} + N \frac{(L^{b}_y)^3}{N^{ 3}} \right)\| u_h'\|_{\cH^4(\sR)} \| u_\ell''\|_{\cH^4(\sR)}
\le
C \frac{(L^{b}_y)^2}{N^2} \| u_h'\|_{\cH^4(\sR)} \| u_\ell''\|_{\cH^4(\sR)},
\end{split}
\end{align}
and the last term can be estimated by
\begin{align}
\begin{split}
&\left\|  \sum_{j\in { [N]}\setminus\{ h,\ell\}}  \sum_{k\in { [N]}\setminus\{ \ell,j\}}  (\partial^2_{y_j y_k} b_i)(\cdot) 
Y^{h}_{j}Y^{\ell}_{k}  
  \right\|_{\cH^2(\sR)}
  \le  \sum_{j\in { [N]}\setminus\{ h,\ell\}}  \sum_{k\in { [N]}\setminus\{ \ell,j\}}  \frac{L^{b}_y}{N^2} \left\|   
Y^{h}_{j}Y^{\ell}_{k}  
  \right\|_{\cH^2(\sR)}
\\
  &\le N^2  \frac{L^{b}_y}{N^2}  \frac{C{(L^{b}_y)^2}\| u_h'\|_{\cH^4(\sR)} \| u_\ell''\|_{\cH^4(\sR)}  }{N^2} 
  = C\frac{(L^{b}_y)^2}{N^2}\| u_h'\|_{\cH^4(\sR)} \| u_\ell''\|_{\cH^4(\sR)}  .
\end{split}
\end{align}
Hence combining \eqref{eq:sum_k_Y_ell_h_decompose_open_loop}, \eqref{eq:sum_k_Y_ell_h_decompose_term13_open_loop}, 
\eqref{eq:sum_k_Y_ell_h_decompose_term4_open} and \eqref{eq:sum_k_Y_ell_h_decompose_term5_open} yields
$\Big\|\sum_{j,k=1}^N (\partial^2_{y_j y_k} b_i)(\cdot,  X_{\cdot, i}, \bX_\cdot) 
Y^{h}_{j}Y^{\ell}_{k}
\Big\|_{\cH^2(\sR)}
\le C \| u_h'\|_{\cH^4(\sR)}\cdot \| u_\ell''\|_{\cH^4(\sR)}  \frac{L^{b}_{y} }{N^2}.$
 This along with
\eqref{eq:b_xx_open}
and \eqref{eq:b_xy_open} yield the desired estimate. 
\end{proof}

\begin{proof}[Proof of Proposition \ref{prop:Z_h_ell_i_moment_open}]
    To simplify the notation,
we write $\bX=\bX^\bu$, $\bY^h=\bY^{\bu,u'_h}$,
$\bY^\ell=\bY^{\bu,u''_\ell}$,   
$\bZ^{h, \ell}=\bZ^{\bu,u'_h,u''_\ell}$
and $\mathfrak{f}^{ h,\ell} = \mathfrak{f}^{\bu,u'_h,u''_\ell}$.
Applying Lemma \ref{lemma:S_i_general} with 
    $\textbf{S} = \bZ^{\ell, h} $, 
  $B_{i}(t) =\partial_x b_i(t, X_{t,i}, \bX_{t})$,
  $\bar{B}_{i,j}(t)=\partial_{y_j} b_i(t, X_{t,i}, \bX_{t})$
  and 
  $f_{t,i} =\mathfrak{f}^{ h,\ell}_{t,i}$ 
   yields that 
    for all  $i\in { [N]}$,
    \begin{align}
 \label{eq:Z_moment_estimate1_open}
 \begin{split}
        \sup_{t\in[0,T]} \E [|Z^{h,\ell}_{t,i}|^2]  
 &\le  2T   \left( 
  \left\|   
  \mathfrak{f}^{ h,\ell}_{ i}    \right\|^2_{\cH^2(\sR)}
     + 
       \left\|  \sum_{k=1}^N
 |   \mathfrak{f}^{ h,\ell}_{k}
   |   
      \right\|^2_{\cH^2(\sR)}
          \frac{ ( L^{b}_y)^2}{N^2}
 T^2 e^{2(L^{b} + L^{b}_y) T}
  \right) e^{2 L^{b} T}.
  \end{split}
    \end{align}
By Lemma \ref{lemma: bound_f_open_loop}, one can get
$ |\mathfrak{f}^{h,\ell}_{i} \|_{\mathcal{H}^2(\mathbb{R})}
    \leq  C \| u_h'\|_{\cH^4(\sR)} \| u_\ell''\|_{\cH^4(\sR)}    L_y^b\left( (\delta_{h,i}+\delta_{\ell,i})\frac{1}{N} + \frac{1}{N^2}\right), $
where $C \geq 0$ is a constant,  
 which depends  on the upper bounds of   $T$, $L^b, L_y^b$.
 Moreover,
 \begin{align*}
    \left\|  \sum_{k=1}^N
 |   \mathfrak{f}^{ h,\ell}_{k}|  \right\|_{\cH^2(\sR)}
  &    \le    \sum_{k=1}^N \left\|   
  \mathfrak{f}^{ h,\ell}_{k}  \right\|_{\cH^2(\sR)} 
  =  \sum_{k\in  \{h,\ell\}}    \left\|   
  \mathfrak{f}^{ h,\ell}_{k}  \right\|_{\cH^2(\sR)} 
  + \sum_{k\in { [N]}\setminus\{h,\ell\}}  \left\|   
  \mathfrak{f}^{ h,\ell}_{k}  \right\|_{\cH^2(\sR)} 
  \\
  &\le C 
  \left(\frac{1}{N}+ (N-2)\frac{1}{N^2}\right)  \| u_h'\|_{\cH^4(\sR)} \| u_\ell''\|_{\cH^4(\sR)} L^{b}_y
  \le \frac{C \| u_h'\|_{\cH^4(\sR)} \| u_\ell''\|_{\cH^4(\sR)} L^{b}_y }{N}.
\end{align*}
Summarizing the above estimates yields the desired conclusion. 
\end{proof}

\section{Conclusion}

\appendix

\bibliographystyle{siam}
\bibliography{references}

\end{document}